\newtheorem{theorem}{Theorem}
\newtheorem{proposition}{Proposition}
\newtheorem{lemma}[theorem]{Lemma}
\newtheorem{cor}[theorem]{Corollary}
\newtheorem{remark}{Remark}
\newcommand{\ep}{\varepsilon}
\newcommand{\be}{\begin{equation}}
\newcommand{\ee}{\end{equation}}
\newcommand{\bes}{\begin{equation*}}
\newcommand{\ees}{\end{equation*}}
\newcommand{\R}{{\bf{R}}}
\newcommand{\ds}{\displaystyle}
\newcommand{\LV}{\left\vert}
\newcommand{\RV}{\right\vert}
\newcommand{\hA}{\widehat{A}}
\newcommand{\htau}{\widehat{\tau}}
\newcommand{\norm}[1]{\lVert#1\rVert}
\newcommand{\paren}[1]{\left(#1\right)}
\newcommand{\exist}{\mathrm{exist}}
\begin{document}

\title{Ill-posedness of  degenerate dispersive equations}
\author{David M. Ambrose}
\address{Department of Mathematics \\ Drexel University}
\email{ambrose@math.drexel.edu}
\author{Gideon Simpson}
\address{Department of Mathematics \\ University of Toronto}
\email{simpson@math.toronto.edu}
\author{J. Douglas Wright}
\address{Department of Mathematics \\ Drexel University}
\email{jdoug@math.drexel.edu}
\author{Dennis G. Yang}
\address{Department of Mathematics \\ Drexel University}
\email{gyang@math.drexel.edu}

\begin{abstract}
In this article we provide numerical and analytical evidence that some degenerate dispersive partial differential equations 
are ill-posed.  Specifically we study the $K(2,2)$ equation $u_t = (u^2)_{xxx} + (u^2)_{x}$ and the ``degenerate Airy" equation $u_t = 2 u u_{xxx}$.  For $K(2,2)$ our results are computational in nature: we conduct a series of numerical simulations  which demonstrate that data which is very small in $H^2$ can be of unit size at a fixed time which is independent of the data's size. For the degenerate Airy
equation, our results are fully rigorous: we prove the existence of a compactly supported self-similar solution which, when combined with certain scaling invariances, implies ill-posedness (also in $H^2$).
\end{abstract}

\maketitle

\section{Introduction}
Dispersion plays a pivotal role in the analysis of the governing equations
for a large number of physical phenomena, ranging from the evolution of surface water
waves to the formation of Bose-Einstein condensates.  A common theme in such 
problems is that {\it uniform} dispersive effects  frequently  control nonlinear terms which may
otherwise lead to ill-posedness.  Consider for instance the following class of
quasilinear equations of KdV type studied in \cite{Craig-Kappeler-Strauss}:
\be
\label{CKS}
u_t = g_3 u_{xxx} + g_2 u_{xx} + g_1 u_x + g_0 u+h
\ee
where $g_j=g_j(\partial_x^{j} u,...,u)$ and $h=h(x,t)$.  
There 
it is shown that such equations
are well-posed provided $|g_3| \ge c > 0$ and $g_2 \ge 0$.
The first of these conditions guarantees that 
 dispersive
effects from $g_3 u_{xxx}$ do not vanish while
the second prohibits
the term $g_2 u_{xx}$ from acting like a backwards heat operator.
Similarly, the results on existence of solutions to the quasilinear
 dispersive equations 
found in \cite{Kenig} (about Schr\"odinger equations), \cite{Ambrose-Masmoudi} (water waves)
and \cite{Simpson} (magma dynamics) 
share the feature that the dispersive effects are nondegenerate, to name just a few examples.

On the other hand, there are a number of physical problems in which
the mechanism which causes the dispersive effects is not only nonlinear
but degenerate.  Some examples can be found in the study of
sedimentation \cite{Rubinstein}, shallow water \cite{Camassa-Holm}, granular media 
\cite{Nesterenko, Daraio1, Daraio2}, numerical analysis \cite{Goodman-Lax} and elastic rods and sheets \cite{Dai-Huo,Ketcheson}.
While there are a large number of articles in which degenerate
dispersive equations (henceforth referred to as DDE) are 
derived \cite{Nesterenko, Daraio2, Daraio1, Camassa-Holm, Ketcheson, Dai-Huo, Rubinstein}, 
special solutions computed \cite{Rosenau-Hyman, Rosenau-Kashdan, Daraio1, Daraio2, Nesterenko} or
numerical computation of solutions performed \cite{Rus1, Rus2, Rosenau-Hyman, Defrutos, Levy-Shu-Yan, Chertock-Levy}, the existence
theory for the initial value problem of these equations is
largely undeveloped.\footnote{The notable exception to this is the well-studied 
Camassa-Holm equation $u_t-u_{xxt} = - 3u u_x + 2 u_x u_{xx} + u u_{xxx}$
which when rewritten as a nonlocal evolution equation could
be said to be degenerate and dispersive.  This equation is integrable and
consequently many results for it do not obviously generalize.}  The articles \cite{Defrutos, Ambrose-Wright1, Galaktionov1}
prove some {\it a priori} estimates for DDE.
\cite{Ambrose-Wright1} and \cite{Ambrose-Wright2} prove existence (but not uniqueness) of solutions to a very special
class of DDE. \cite{Galaktionov2} contains
a semi-rigorous discussion of existence issues for these sorts of equations,
including defining ``$\delta$-entropy solutions" by borrowing ideas from first
order conservation laws.
None of these articles fully settles the well-posedness issues
for degenerate dispersion, and
thus our interest is in studying the Cauchy problem for some simple
and common DDE.  

In particular we study
the following two equations:
\be \label{K22}
u_t = 2 u u_{xxx} + 6 u_x u_{xx} + 2 u u_x
\ee
and
\be \label{DDE}
u_t = 2 u u_{xxx}
\ee
for $x \in \R$.
The first of these is the $K(2,2)$ equation of Rosenau \& Hyman \cite{Rosenau-Hyman}.  
Their goal was to understand the role of degenerate
nonlinear dispersion in the formation of patterns and this equation
(together with other members of the $K(m,n)$ family $u_t = (u^m)_x + (u^n)_{xxx}$)
has come to serve as the paradigm for degenerate dispersive systems.
The most celebrated feature of this equation is the existence
of compactly supported
traveling waves, a.k.a.~``compactons."
These are given by
$$
{u(x,t)=Q_\lambda(x+\lambda t):= {4 \lambda \over 3} \cos^2\left( {x+\lambda t \over 4} \right) \chi_I(x+\lambda t)}
$$
where $\chi_I$ is the indicator function for the interval
$I=[-2 \pi, 2 \pi]$ and $\lambda \ne 0$. More recently, $K(2,2)$
has been used to model pulses in both elastic
and granular media \cite{Ketcheson, Kevrekidis}.
Equation \eqref{DDE}, a ``degenerate Airy equation", appeared in \cite{Galaktionov1} 
and is amongst the simplest equations which could be said to feature degenerate dispersion.
We study it here due to its similarity to the $K(2,2)$ equation and because  it is of the form of \eqref{CKS} but does not meet the uniform dispersion
hypothesis needed in \cite{Craig-Kappeler-Strauss}.

Surprisingly, our conclusions are that both equations are ill-posed for data in $H^2$.
In particular, their solutions do not depend smoothly
on their initial conditions.  Our results for \eqref{K22} are computational in nature:  we conduct a 
series of numerical simulations of \eqref{K22} which demonstrate that data which is very small
in $H^2$
can be of unit size at a fixed time which is independent of the data's size.  For \eqref{DDE} our results are fully rigorous:  we prove 
the existence of a compactly supported self-similar solution which, when combined with 
certain scaling invariances, implies ill-posedness.  Note that our focus on $H^2$ initial data
is due to the fact the compacton solutions of $K(2,2)$ have a discontinuity in their second derivative
at the edge of their support \cite{Li-Olver-Rosenau}.  Consequently, they reside in $H^2$ but not $H^3$, and so it is natural
to work in this space.  

Our suspicion that \eqref{K22} is ill-posed
has its origin in the various numerical simulations of collisions between compactons in $K(m,n)$ equations.
A wide variety of numerical methods has been used to compute solutions:
pseudo-spectral \cite{Rosenau-Hyman}, particle \cite{Chertock-Levy}, Pad\'e \cite{Rus1, Rus2}, discontinuous Galerkin \cite {Levy-Shu-Yan},
and finite difference \cite{Defrutos}.  Each of these simulations shows that the computed
collisions between compactons are nearly elastic: the compactons emerge from the collision
nearly identical to their original state but phase shifted.  As is common in 
collisions between solitary waves in ``nearly" integrable PDE, the simulations 
show 
that in addition to the outgoing waves there is a small amplitude disturbance
behind the waves.  In nondegenerate problems, similar disturbances are sometimes called ``dispersive ripples"
and are viewed as being  linear effects---the tail (approximately) solves the linearized equation \cite{Wayne-Wright, Wright}.
However, since the linearization of \eqref{K22} about zero is trivial
this heuristic cannot apply.  The disturbance is described variously as: a compacton/anti-compacton pair in \cite{Rosenau-Hyman};
a numerical artifact in \cite{Rus1, Defrutos}; and as a shock in \cite{Rus2}.  No such disturbance is present at all in \cite{Chertock-Levy}.  
Lastly, the authors of \cite{Levy-Shu-Yan} describe a ``high-frequency oscillation" but go on to 
state ``we do not know what is the source of these oscillations..."
 In each case, the qualitative features
of the disturbance are quite different.  Our feeling is that these discrepancies are not related to differences
in the numerical methods or the implementation thereof but rather are due to an underlying
issue with the equation itself.

Moreover,  inspection of  the right hand side of \eqref{K22} provides
evidence that the equation may be ill-posed.
Observe that when $u_x < 0$, the term $6 u_x u_{xx}$
is formally a backwards heat operator.  
The key question, initially raised in \cite{Rosenau-Hyman}, is this: {\it
can dispersive effects due to $2u u_{xxx}$ 
counteract the instability caused by the backwards heat operator $6u_x u_{xx}$? } 
Thus we should seek to understand the dispersive effects due to $2 u u_{xxx}$---hence our interest in \eqref{DDE}.  
Our results show that the answer to the key question is ``no".  
As we stated above, our results show that $2 u u_{xxx}$ is, in its own right, just as problematic as the backwards heat term.
Differentiation of \eqref{DDE} with respect to $x$ yields for $w := u_x$
$$
w_t = 2 u w_{xxx} + 2 u_x w_{xx}.
$$
Note that the second term on the RHS above is also a backwards heat operator when $u_x < 0$. The
backwards heat term evident in \eqref{K22} is, in this way, hidden in \eqref{DDE}.
Thus we expect blow up of $u_x$ when $u_x$ is negative.  Moreover,
this blow up should be particularly catastrophic
when the function crosses the $x$-axis with negative slope.  

This motivates our approach:  the self-similar solution constructed has such a transverse crossing and the 
resulting ill-posedness is due to blow up in the first and second derivatives of the solution.
While our proof that \eqref{DDE}
is ill-posed does not naturally carry over to \eqref{K22}, the numerics are highly suggestive.  Most tellingly,
since the numerics are performed using strictly positive initial data, they suggest the ill-posedness can manifest
even in the absence of such a transverse crossing.

\begin{remark}
In \cite{Craig-Goodman}, Craig \& Goodman demonstrate that the differential equation $u_t = -x u_{xxx}$ is ill-posed, whereas
$u_t = x u_{xxx}$ is not only well-posed but the solution map is essentially infinitely smoothing.  The proofs
of these facts follow from explicit formula
for solutions of the initial value problems which can be constructed for each equation by mean of the Fourier transform
and the method of characteristics.  
\end{remark}

The remainder of this paper is organized as follows.   In Section \ref{SS} we show that the existence of a self-similar
solution to \eqref{DDE} gives, as a consequence, the ill-posedness of the equation. In Section \ref{Construct}
we construct the  self-similar solution using techniques from spatial dynamics.
In Section \ref{SC} we discuss our numerical strategy for demonstrating
ill-posedness of \eqref{K22} and present the results of our simulations.

\vspace{.1in}
\noindent {\bf Acknowledgements:}  We would like to thank Professor Philip Rosenau for many interesting and helpful conversations regarding the degenerate dispersive equations studied in this article.  Additionally, the work was supported by NSF Grants DMS 0926378,
DMS 1008387,
and DMS 1016267 (D.M.A.) and DMS 0908299 and DMS 0807738 (J.D.W.).  
G.~S. was partially funded by NSERC.
 Finally, we are especially grateful the Louis and Bessie Stein Family Foundation who generously supported this work.

%
%
%
%

\section{Self-similar solutions imply ill-posedness for \eqref{DDE}}
\label{SS}

We will prove there is a self-similar solution for \eqref{DDE} of the form
$$
u(x,t) = A\left(\ds \frac{x}{ (1-t)^{1/3}}\right)
$$
where  $A$ gives the profile.  Though other choices for the scaling
are possible, we choose this particular set so that our proposed solution
exhibits blow up (as $t \to 1^-$) in the first and second derivative while the $L^\infty$ norm
remains bounded.
Inserting this into
\eqref{DDE} yields:
\be
\label{ODE}
 A''' =  {\tau A' \over 6A}.
\ee
Here $\tau = x (1-t)^{-1/3}$ is the self-similar coordinate upon which $A$ depends
and 
we have $(\square)' = d/d\tau(\square)$.  We specify $A(0) = 0$ and $A'(0) = -1$
so that the self-similar solution crosses the $x$-axis transversely with negative slope,
the ``bad" situation identified in the Introduction.

We prove the following proposition concerning solutions of \eqref{ODE}:
\begin{proposition}\label{ODE Prop}
There exists $\mu_* > 0$, $\tau_*>0$, $C_*>1$ and a function $A_*(\tau) \in C^3 \left( [0,\tau_*) \right)$
with the following 
properties:
\begin{enumerate}[{\rm (i)}]
\item $\ds A_*(\tau)$ solves \eqref{ODE} for all $\tau \in [0,\tau_*)$.
\item $A_*(0)=0$, $A_*'(0) = -1$ and $A_*''(0) = \mu_*$.
\item $A_*(\tau)$ is negative in $(0,\tau_*)$ and has a unique minimum in the interior of this interval.
\item $\ds \lim_{\tau \to \tau_*^-} A_*(\tau) = \lim_{\tau \to \tau_*^-} A'_*(\tau) = 0$.
\item $C_*^{-1} \left \vert \log\left |\tau-\tau_*\right |\right \vert \le \vert A''_*(\tau) \vert \le  C_*\left \vert \log\left |\tau-\tau_*\right |\right \vert $ for all $\tau \in (0,\tau_*)$ sufficiently close to $\tau_*$.
\end{enumerate}
\end{proposition}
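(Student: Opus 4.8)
My plan is to treat \eqref{ODE} as a singular shooting problem in the single parameter $\mu=A''(0)$, using the scaling structure of the equation to confirm that one parameter is all there is. The first step is to dispose of the apparent singularity at the origin. Although \eqref{ODE} is singular wherever $A=0$, the prescribed data $A(0)=0$, $A'(0)=-1$ force $\tau/A(\tau)\to -1$ as $\tau\to 0^+$, so the origin is only a regular singular point. I would make this rigorous by reducing the order: setting $w=A'$ and $A(\tau)=\int_0^\tau w$, the equation becomes $w''=\tau w/\big(6\int_0^\tau w\big)$, whose coefficient $\tau/\big(6\int_0^\tau w\big)\to-\tfrac16$ as $\tau\to0^+$. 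A contraction-mapping argument on this integral formulation then yields, for each $\mu$ near any admissible value, a unique local $C^3$ solution $A(\,\cdot\,;\mu)$ depending smoothly on $\mu$, with $A(0)=0$, $A'(0)=-1$, $A''(0)=\mu$, and with $A'''(0)=\tfrac16$ forced independently of $\mu$. I would also record the scaling symmetry $A(\tau)\mapsto\beta^{-3}A(\beta\tau)$, under which $A'(0)\mapsto\beta^{-2}A'(0)$ and $A''(0)\mapsto\beta^{-1}A''(0)$ and $\tau_*\mapsto\beta^{-1}\tau_*$; this confirms that, after normalizing $A'(0)=-1$, the parameter $\mu$ captures all the freedom.

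Next I would extract the qualitative phase portrait on the region $A<0$, where \eqref{ODE} is a regular system and solutions continue. Just past the origin $A<0$ and $A'<0$, so $A'''=\tau A'/(6A)>0$; together with $A''(0)=\mu>0$ this keeps $A''\ge\mu>0$, so $A'$ increases monotonically through $0$ and $A$ attains a minimum at some finite $\tau_{\min}(\mu)$. Past the minimum $A<0$, $A'>0$ and now $A'''<0$, so $A''$ decreases. Establishing that $A'$ changes sign exactly once (the \emph{unique} minimum of property (iii)) and that $A$ stays strictly negative on $(0,\tau_*)$ is a monotonicity bookkeeping step I would carry out from these sign relations.

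The heart of the argument is a shooting dichotomy in $\mu$ that isolates $\mu_*$. After the minimum a race begins between two events: $A$ reaching the axis ($A\uparrow0$) and $A'$ returning to $0$ (a second critical point, i.e.\ an interior maximum that would keep $A$ away from the axis). I would show that for $\mu$ in one part of the range $A$ reaches $0$ first, giving a \emph{transverse} crossing with $\lim_{\tau\to\tau_*^-}A'=\ell(\mu)>0$ --- a self-consistent scenario in which, since $A'\to\ell\neq0$, the forcing behaves like $\tau_*/\big(6(\tau-\tau_*)\big)$ so that $A''$ diverges only logarithmically; while for $\mu$ in the complementary range $A'$ returns to $0$ first and $A$ turns back down short of the axis. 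Each event is an open condition on $\mu$, and the scaling symmetry together with the finite-time bound on $\tau_{\min}$ keeps both ranges nonempty and prevents escape to $-\infty$. An intermediate-value argument then yields a critical $\mu_*$ on the boundary, for which the two events coincide: $A\to0$ and $A'\to0$ together, which is exactly the non-transverse landing of property (iv).

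Finally I would pin down the rate (v). Near the non-transverse endpoint I expect the balance $A\sim\tfrac{\tau_*}{6}s^2\log s$, $A'\sim-\tfrac{\tau_*}{3}s\log s$, $A''\sim\tfrac{\tau_*}{3}\log s$ with $s=\tau_*-\tau$, obtained by inserting a logarithmically corrected quadratic ansatz into \eqref{ODE}. To upgrade this to the two-sided bound (v) I would run a bootstrap: from $A\asymp s^2|\log s|$ and $A'\asymp s|\log s|$ the equation gives $A'''=\tau A'/(6A)\asymp 1/s$, and integrating twice recovers $A''\asymp|\log s|$ and closes the estimates, with the constant $C_*$ coming from the two-sided control of $A$ and $A'$. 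I expect the main obstacle to be precisely this coupling of the two singular endpoints: because \eqref{ODE} is singular wherever $A=0$, standard shooting technology does not apply directly, and the delicate point is proving that the critical orbit truly reaches $\tau_*$ with $A'\to0$ rather than overshooting into a transverse crossing or stalling short of the axis. Controlling this non-hyperbolic, logarithmic approach to the degenerate endpoint --- most cleanly by desingularizing via scale-invariant variables and $\log\tau$ time, so that the endpoints become equilibria of an autonomous flow --- is where the real work lies.
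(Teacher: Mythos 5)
Your overall architecture---a one-parameter shooting in $\mu$ after normalizing $A'(0)=-1$, a dichotomy between a transverse crossing ($A\to 0$ with $A'\to\ell>0$) and an interior negative maximum ($A'(\tau_2)=0$, $A(\tau_2)<0$), a critical $\mu_*$ on the boundary realizing the degenerate landing, and logarithmic asymptotics there---is exactly the paper's strategy, and your local theory (contraction mapping on the integro-differential form $w''=\tau w/\big(6\int_0^\tau w\big)$) is a legitimate alternative to the paper's construction, which instead regularizes the right-hand side and proves uniqueness through the nearly Hamiltonian structure. The first genuine gap is nonemptiness of the two shooting regimes. You claim ``the scaling symmetry together with the finite-time bound on $\tau_{\min}$ keeps both ranges nonempty,'' but this cannot work: the scaling $A\mapsto\beta^{-3}A(\beta\,\cdot\,)$ sends $A'(0)$ to $-\beta^{-2}$, so once you have normalized $A'(0)=-1$ the symmetry acts trivially on your one-parameter family and produces no value of $\mu$ in either regime. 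The paper must prove both facts by hand, and this is where most of its analysis lives: for large $\mu$ it shows the crossing is transverse using an energy-type lower bound $\tfrac12 (A')^2 > V_2(A)$ with $V_2(a)=\frac{\bar\tau}{6}a\ln\big(a/A(\tau_1)\big)$, $\bar\tau=6A''(\tau_1)$, combined with the quantitative estimates $\tau_1\asymp 1/\mu$, $|A(\tau_1)|\asymp 1/\mu$, $A''(\tau_1)\asymp\mu$ and a contradiction involving $\int_0^1 db/\sqrt{|b\ln b|}$; for $\mu=0$ it shows the solution turns back short of the axis using the companion upper bound $\tfrac12 (A')^2 < V_1(A)$, where $\tau_1/6-A''(\tau_1)>0$ forces $V_1(0)<0$, so $V_1$ has a zero $k\in(A(\tau_1),0)$ acting as a barrier. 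Nothing in your sketch substitutes for these two lemmas, and you also never prove that one of the two events occurs in finite time ($\tau_2<\infty$), which needs its own argument.

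The second gap is the openness of the transverse-crossing regime, which you correctly identify as the delicate point (continuous dependence on $\mu$ fails at the singular endpoint $A=0$) but then address only with the hope of ``desingularizing via scale-invariant variables and $\log\tau$ time.'' Without an actual argument here, the intermediate-value step at $\mu_*=\inf\Omega$ cannot be run: you cannot exclude that the crossing regime contains its infimum, i.e.\ that the critical orbit itself crosses transversely. The paper resolves this with a concrete device: integrating \eqref{ODE} once gives the identity $AA''-\tfrac12 (A')^2+\tfrac12=\tfrac16\tau A-\tfrac16\int_0^\tau A$, valid while $A<0$, and a sequence/contradiction argument based on this identity shows that a transverse crossing persists under small perturbations of $\mu$. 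Finally, your treatment of (v) is circular as stated (you assume $A\asymp s^2|\log s|$ to derive $A''\asymp|\log s|$); the paper's route is cleaner and you could adopt it: trap $A'''$ between $\tau_2 A'/(6A)$ and $\tau_1 A'/(6A)$ on $(\tau_1,\tau_2)$, integrate to pin $A''$ between two multiples of $\ln|A|$, and then show $\ln|A|=O(\ln|\tau_2-\tau|)$ by a mean-value argument using $A''<-1$ near $\tau_2$.
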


The proof of this proposition is technical and difficult and we postpone it until the next section.
First we utilize it to prove our main result for \eqref{DDE}:
\begin{theorem}\label{DDE ip}
The Cauchy problem for \eqref{DDE}, posed in $H^2$, does not depend smoothly on the initial data.
\end{theorem}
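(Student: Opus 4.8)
My plan is to promote the profile $A_*$ of Proposition~\ref{ODE Prop} to a genuine, compactly supported solution of \eqref{DDE} lying in $H^2(\R)$, and then to exploit the scaling symmetry of \eqref{DDE} to build a one-parameter family of solutions whose initial data tend to $0$ in $H^2$ while their $H^2$ norm at a fixed positive time stays bounded below by a fixed positive constant. Such a family rules out continuity, and hence smoothness, of any data-to-solution map at the zero datum.

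First I would assemble the profile. The ODE \eqref{ODE} is invariant under $A(\tau)\mapsto -A(-\tau)$, so reflecting $A_*$ through the origin produces a $C^1$ extension to $(-\tau_*,0]$ that again solves \eqref{ODE} and, by property (iv), satisfies $A,A'\to 0$ as $\tau\to-\tau_*^+$; extending by $0$ outside $[-\tau_*,\tau_*]$ yields a compactly supported profile $A$. By property (ii) the two one-sided values of $A''$ at $\tau=0$ are $\pm\mu_*$, so $A''$ has a single jump there, while property (v) gives $|A''(\tau)|\lesssim\bigl|\log|\tau\mp\tau_*|\bigr|$ near the edges of the support; since $\log^2$ is locally integrable, $A''\in\ltwo$ and hence $A\in H^2(\R)$ with $0<\norm{A''}_{L^2}<\infty$. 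Setting $u(x,t)=A(x(1-t)^{-1/3})$ gives a solution of \eqref{DDE} on $\R\times[0,1)$: it solves the equation classically away from the crossing $x=0$ and the support edges, and across $x=0$ the nonlinearity $2uu_{xxx}$ is harmless because $u$ itself vanishes there. A change of variables gives $\norm{u_{xx}(\cdot,t)}_{L^2}=(1-t)^{-1/2}\norm{A''}_{L^2}\to\infty$ as $t\to1^-$, so the $H^2$ norm blows up at $t=1$.

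Next I record the relevant invariance. Direct substitution shows that if $u$ solves \eqref{DDE} then so does $v^{\alpha,\beta}(x,t):=\alpha\,u(\beta x,\alpha\beta^3 t)$ for every $\alpha,\beta>0$, giving a two-parameter family with blow-up time $(\alpha\beta^3)^{-1}$. The same change of variables yields, for $t<(\alpha\beta^3)^{-1}$, the identity $\norm{\partial_x^2 v^{\alpha,\beta}(\cdot,t)}_{L^2}^2=\alpha^2\beta^3(1-\alpha\beta^3 t)^{-1}\norm{A''}_{L^2}^2$, together with the lower-order relations $\norm{v^{\alpha,\beta}(\cdot,t)}_{L^2}^2=\alpha^2\beta^{-1}(1-\alpha\beta^3 t)^{1/3}\norm{A}_{L^2}^2$ and $\norm{\partial_x v^{\alpha,\beta}(\cdot,t)}_{L^2}^2=\alpha^2\beta(1-\alpha\beta^3 t)^{-1/3}\norm{A'}_{L^2}^2$. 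I would also note, echoing the Introduction, that the linearization of \eqref{DDE} about $u\equiv0$ is $w_t=0$, whose time-$t$ solution operator is the identity, so any map differentiable at $0$ would be forced to agree with the identity to first order there; the family below in fact violates the far weaker requirement of continuity.

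Finally comes the contradiction, which is the heart of the matter. Fix $T>0$, let $\beta\to\infty$, and choose $\alpha_\beta$ so that $\alpha_\beta\beta^3T=1-\beta^{-3}$, i.e. $\alpha_\beta=(1-\beta^{-3})/(T\beta^3)$; then the blow-up time $(\alpha_\beta\beta^3)^{-1}>T$, so the solution exists at $t=T$, and $\alpha_\beta\sim T^{-1}\beta^{-3}$. The data norm is dominated by its second-derivative part, $\norm{v^{\alpha_\beta,\beta}(\cdot,0)}_{H^2}\sim\alpha_\beta\beta^{3/2}\norm{A''}_{L^2}\sim T^{-1}\beta^{-3/2}\norm{A''}_{L^2}\to0$, whereas at the fixed time $T$ the gap $1-\alpha_\beta\beta^3T=\beta^{-3}$ forces the leading term to stabilize, $\norm{\partial_x^2 v^{\alpha_\beta,\beta}(\cdot,T)}_{L^2}^2=\alpha_\beta^2\beta^{6}\norm{A''}_{L^2}^2\to T^{-2}\norm{A''}_{L^2}^2>0$, while the lower-order terms vanish in the limit. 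Thus there are solutions of \eqref{DDE} with initial data tending to $0$ in $H^2$ whose $H^2$ norms at the fixed time $T$ converge to $T^{-1}\norm{A''}_{L^2}>0$. Since the zero datum yields the zero solution, no data-to-solution map for \eqref{DDE} can be continuous at $0\in H^2$, and in particular it cannot be smooth, proving the theorem.

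The step I expect to be the main obstacle is the first one: rigorously certifying that the assembled profile furnishes a bona fide $H^2$-valued solution of \eqref{DDE}, in particular interpreting the degenerate product $2uu_{xxx}$ across the transverse crossing (where $u_{xx}$ jumps) and at the edges of the support (where $A''$ is logarithmically singular), and fixing the precise solution concept so that the "solution map" in the statement is meaningful despite the lack of a uniqueness theory. By contrast, the scaling identities and the limiting bookkeeping of the last paragraph are routine once the self-similar solution is in hand.
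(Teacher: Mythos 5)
Your proposal is correct and takes essentially the same approach as the paper's proof: odd reflection and extension by zero of $A_*$ to obtain a compactly supported self-similar solution lying in $H^2$ (with $uu_{xxx}$ made sense of via the continuity of $\tau A'$), followed by the scaling invariance of \eqref{DDE} with parameters tuned so that the initial data vanish in $H^2$ while the solutions at a fixed positive time do not. The only, immaterial, difference is quantitative: the paper's choice $\lambda=\ep^2$, $T=1+\ep^{16}$ makes the $H^2$ norm at $t=1$ grow like $1/\ep$, whereas your parameters keep it bounded below by $T^{-1}\|A''\|_{L^2}$; either conclusion contradicts continuity, hence smoothness, of any solution map.
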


\begin{proof}(Theorem \ref{DDE ip})
The proof of the theorem is classical:  we construct a sequence
of solutions which are arbitrarily small in $H^2$ at $t=0$
but which are arbitrarily large when $t= 1$. 
Let
$$
A(\tau) := \begin{cases}
A_*(\tau)& \textrm{ if } \tau\in[0,\tau_*)  \\
-A_*(-\tau)&\textrm{ if } \tau\in(-\tau_*,0] \textrm{ and } \\
0 &\textrm{ otherwise}.
\end{cases}
$$
This is the profile function for our self-similar solution.  See Figure~\ref{ssbup}.
\begin{figure}
\includegraphics[width=0.8\textwidth]{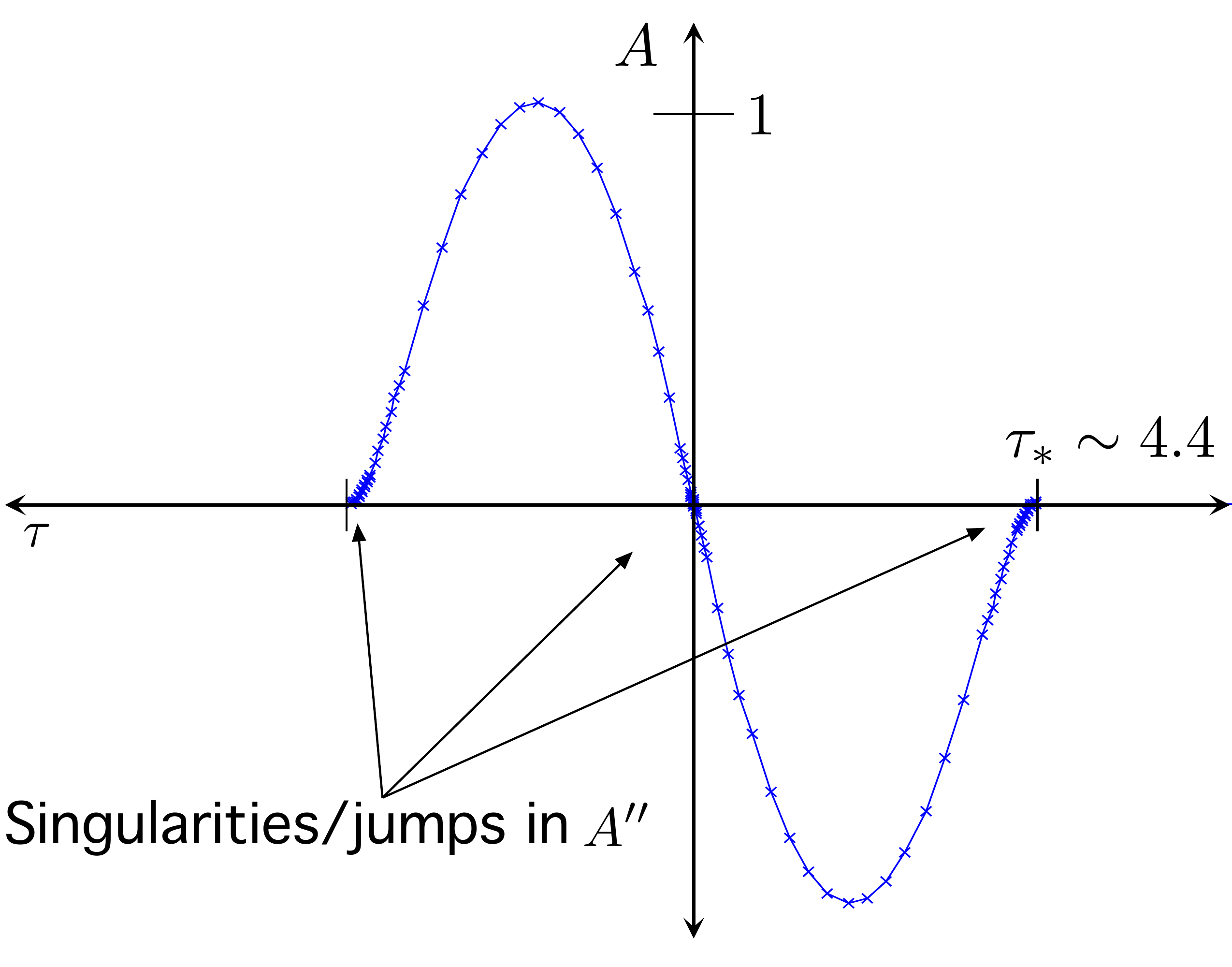}%
\caption{Numerically computed profile $A(\tau)$.  Note $\mu = \mu_\star \sim 0.354875$.\label{ssbup}}
\end{figure}

Note that it is compactly supported, not unlike the famous 
Barenblatt solution of the porous medium equation \cite{Barenblatt}.  Also observe that $A''$ is discontinuous at $\tau=0$ and 
diverges logarithmically at $\pm \tau_*$.  Nevertheless, these singularities are sufficiently mild so that 
$A \in H^2$.  Also observe that $\tau A'$ is continuous for all $\tau$ and consequently so is $A A'''$.  Thus
the function
$$
u_{1,1}(x,t):=A\left(\ds \frac{x}{ (1-t)^{1/3}}\right)
$$
is in fact a classical solution of \eqref{DDE}.  That is to say, while $\partial_x^3 u_{1,1} \notin L^2$, $u_{1,1} \partial_x^3 u_{1,1}$ is. 
Compare this with the compacton solution of $K(2,2)$: it is not in $H^3$, though its square is.

For any $T \in \R$ and $\lambda > 0$,  \eqref{DDE} is invariant under the following transformation
\be
\label{DDEscaling}
u(x,t) \longrightarrow \lambda u\left( {x \over \lambda^{1/3}},t-T\right).
\ee
Thus we have a two-parameter family of self-similar solutions of the form:
$$
u_{\lambda,T}(x,t) := \lambda A\left( {x \over \lambda^{1/3} (T-t)^{1/3} }\right).
$$
An elementary computation shows that 
\begin{multline}\label{sobscaling}
\|u_{\lambda,T}(\cdot,t) \|_{H^2} = 
\lambda ^{7/6}(T-t)^{1/6}\| A\|_{L^2} \\
+\lambda ^{5/6}(T-t)^{-1/6}\| A'\|_{L^2}
+\lambda ^{1/2}(T-t)^{-1/2}\| A''\|_{L^2}.
\end{multline}
Fixing $0<\ep \ll 1$, setting $\lambda(\ep) = \ep^{2}$ and $T(\ep)=1+\ep^{16}$
results in
$$
\|u_{\lambda(\ep),T(\ep)}(\cdot,0) \|_{H^2} \le C \ep 
\quad
\textrm{and}
\quad
\|u_{\lambda(\ep),T(\ep)}(\cdot,1) \|_{H^2} \ge {C \over \ep}.
$$
(Here $C>0$ is a nonessential constant.)  This pair of inequalities
demonstrates that we have constructed a sequence of initial data for
\eqref{DDE} which is arbitrarily small but whose solution at time $t=1$
is arbitrarily large.  Thus the equation is ill-posed in $H^2$.

\end{proof}

\begin{remark}
In \cite{Galaktionov1}, the authors investigate self-similar solutions of 
 the DDE $u_t = (u u_{x})_{xx}$.  In particular they numerically compute
 profiles for shock and rarefaction solutions which are, in many respects, quite similar
 to the ones we find here.  Namely, the shock solutions cross the $x$-axis
 with negative slope and form a singularity in $u_x$ at the time of the shock formation.
The
solutions they find are not in $L^2(\R)$---either they do not converge to zero 
as $x \to \infty$ or their decay is not rapid enough to be integrable.  Our focus here is on
well-posedness issues in $H^2$ and consequently our strategies and methods
will differ from those of \cite{Galaktionov1}.
\end{remark}

\section{Construction of the self-similar solution}\label{Construct}
This section is devoted to proving Proposition \ref{ODE Prop}.
We rewrite \eqref{ODE} as the initial value problem (IVP):
\be
\label{IVP}
6AA''' = \tau A' \quad {\textrm{with} } \quad A(0) = 0,\ A'(0)=-1,\ A''(0)=\mu.
\ee
Our strategy will be as follows: first we will show that the IVP \eqref{IVP} has a unique solution, which depends on $\mu$ continuously. Next we will show that for $\mu$ sufficiently large the solution of \eqref{IVP} goes to zero in finite time. Then we will show that for $\mu = 0$ there is a smallest time $0<\tau_2 < \infty$ such that the solution of \eqref{IVP} has its first local maximum at $\tau_2$ and remains strictly negative for all $\tau \in (0, \tau_2]$. See Figure~\ref{varyingmy}. 
\begin{figure}
\includegraphics[width=0.9\textwidth]{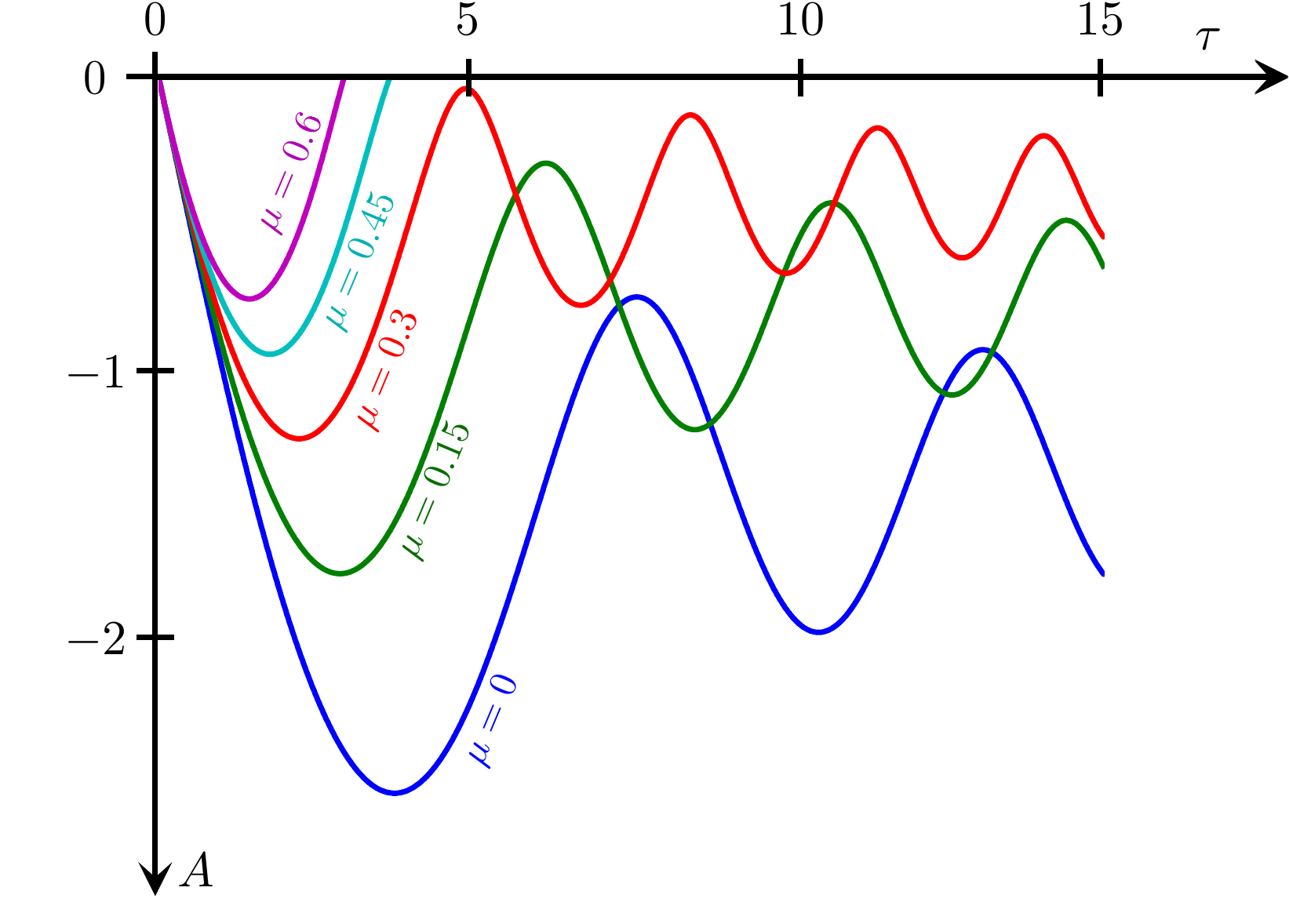}%
\caption{Numerically computed solutions of \eqref{IVP} for increasing values of  $\mu$.\label{varyingmy}}
\end{figure}
Then an open/closed argument will give the special solution $A_*$ for a choice $\mu = \mu_*>0$.

The differential equation \eqref{ODE} is nonlinear, nonautonomous and singular at $A=0$. As such, carrying out the 
details of our strategy is quite challenging.
The key observation for proving Proposition \ref{ODE Prop} is that \eqref{ODE} is, in a loose sense,
close to being hamiltonian.  Specifically, one can rewrite the right hand side of \eqref{ODE} 
as $\ds \frac{\tau}{6} \frac{d}{d\tau}\ln \LV A(\tau) \RV $.  If we then consider the related equation
\be
\label{stoopid model}
B''' = \frac{d}{d\tau}\ln \LV B(\tau) \RV 
\ee
obtained from \eqref{ODE} by replacing the nonautonomous $\tau/6$ with the constant $1$, one sees that this equation is in fact
hamiltonian.  Specifically one can integrate both sides to find:
$$
B''(\tau) =  \frac{d}{dB} \left( \left(B''(0)-1\right) B + B  \ln \LV B \RV      \right)
$$
The conserved energy is $$\ds {1 \over 2} (B')^2 -  \left( \left(B''(0)-1\right) B + B  \ln \LV B \RV      \right) $$
and most of the dynamics of \eqref{stoopid model} can be determined by analyzing this quantity.
In the following proof of Proposition \ref{ODE Prop}, we will frequently make use of quantities similar to this energy which, though not conserved for solutions of \eqref{ODE}, do provide quite a bit of dynamical information about solutions.

\subsection{Well-posedness of \eqref{IVP}}
Now we prove  \eqref{IVP} has unique solutions and that such solutions depend smoothly upon $\mu$.

\begin{lemma}\label{EUC}
For any $\mu_0 \in \R$, there exist $\tau_0 > 0$ and $\delta > 0$ such that for each $\mu \in (\mu_0 - \delta, \mu_0 + \delta)$ the IVP \eqref{IVP} has a unique solution $A : [0, \tau_0] \rightarrow \R$, which satisfies $A(\tau) < 0$ for all $\tau \in (0, \tau_0]$. In addition, there exists $C > 0$ such that for any $\mu_1, \mu_2 \in (\mu_0 - \delta, \mu_0 + \delta)$ the solutions $A_1(\tau)$ and $A_2(\tau)$ to the IVP \eqref{IVP} with $A''_1(0) = \mu_1$ and $A''_2(0) = \mu_2$, respectively, satisfy
$$
\left| A_2(\tau) - A_1(\tau) \right| + \left| A'_2(\tau) - A'_1(\tau) \right| + \left| A''_2(\tau) - A''_1(\tau) \right| \le C |\mu_2 - \mu_1|
$$
for all $\tau \in [0, \tau_0]$.
\end{lemma}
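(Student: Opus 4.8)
The plan is to exploit the transversality condition $A'(0)=-1$ to tame the singularity of \eqref{ODE} at $A=0$. Although $A(0)=0$ makes the right-hand side $\tau A'/(6A)$ formally indeterminate, the prescribed data force $A(\tau)\approx-\tau$ and $A'(\tau)\approx-1$ for small $\tau$, so that $\tau A'/(6A)$ is in fact bounded (tending to $1/6$) as $\tau\to0^+$. I would therefore reformulate \eqref{IVP} as an integral fixed-point problem and solve it by contraction on a short interval $[0,\tau_0]$, with the lower bound $A(\tau)\le-\tau/2$ built into the function space so as to keep the solution away from the singular set.

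Concretely, write $A_{0,\mu}(\tau):=-\tau+\tfrac{\mu}{2}\tau^2$ and define, for $A\in C^2([0,\tau_0])$,
$$
(\Phi_\mu A)(\tau):=A_{0,\mu}(\tau)+\int_0^\tau\!\!\int_0^{s}\!\!\int_0^{s'}\frac{r\,A'(r)}{6\,A(r)}\,dr\,ds'\,ds,
$$
so that a fixed point of $\Phi_\mu$ is precisely a solution of \eqref{IVP} with $A''(0)=\mu$. I would run the contraction on the closed set $\mathcal{B}$ of functions satisfying the three initial conditions, the bound $\|A-A_{0,\mu}\|_{C^2}\le\rho$, and the constraint $A(\tau)\le-\tau/2$ on $[0,\tau_0]$. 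On $\mathcal{B}$ one has $|A(r)|\ge r/2$ and $|A'|$ bounded, whence the integrand obeys $|r A'/(6A)|\le C$; this makes the triple integral $\mathrm{O}(\tau^3)$ and its second derivative $\mathrm{O}(\tau_0)$, so $\Phi_\mu$ preserves both $\mathcal{B}$ and the lower-bound condition once $\tau_0$ is small. For the contraction estimate, note that any two elements of $\mathcal{B}$ share the same $2$-jet at $0$, so their difference $W$ satisfies $W(0)=W'(0)=W''(0)=0$; writing $A_2'/A_2-A_1'/A_1=(A_1(A_2'-A_1')+A_1'(A_1-A_2))/(A_1A_2)$ and using $|A_i|\ge r/2$ together with $|W(r)|\le\tfrac{r^2}{2}\sup|W''|$ and $|W'(r)|\le r\sup|W''|$ shows that the singular factor $1/r$ is absorbed, yielding $\|\Phi_\mu A_2-\Phi_\mu A_1\|_{C^2}\le C\tau_0^2\|A_2-A_1\|_{C^2}$. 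Shrinking $\tau_0$ gives a contraction, hence a unique fixed point; a short-time argument shows any genuine solution lies in $\mathcal{B}$, so uniqueness holds among all solutions, and the continuity of $\tau A'/(6A)$ promotes the solution to a classical $C^3$ one.

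For the Lipschitz dependence on $\mu$, I would first shrink $\tau_0,\delta$ so that all $\mu\in(\mu_0-\delta,\mu_0+\delta)$ yield solutions on a common interval $[0,\tau_0]$ with $|A|\ge\tau/2$. Given $\mu_1,\mu_2$ with solutions $A_1,A_2$, set $W:=A_2-A_1$; now $W(0)=W'(0)=0$ but $W''(0)=\mu_2-\mu_1$. From $W''(\tau)=(\mu_2-\mu_1)+\int_0^\tau\frac{s}{6}\bigl(\tfrac{A_2'}{A_2}-\tfrac{A_1'}{A_1}\bigr)ds$ and the same algebraic identity as above, the integrand is bounded by $\tfrac13|W'(s)|+\tfrac{2C}{3s}|W(s)|$. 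The crucial point is that, because $W$ and $W'$ vanish at $0$, one has $|W(s)|\le s^2\sup_{[0,s]}|W''|$, so the potentially dangerous term $\int_0^\tau\frac{|W|}{s}\,ds$ is integrable and bounded by $\tfrac{\tau^2}{2}\sup|W''|$. Setting $N(\tau):=\sup_{[0,\tau]}|W''|$ this gives $N(\tau)\le|\mu_2-\mu_1|+C\tau_0^2N(\tau)$, and for $\tau_0$ small the last term is absorbed, so $N(\tau_0)\le C|\mu_2-\mu_1|$. The bounds on $|W|$ and $|W'|$ then follow by integrating $W''$ from $0$, establishing the stated inequality.

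I expect the main obstacle to be the singularity at $\tau=0$: every estimate must be arranged so that the $1/A\sim 1/\tau$ blow-up in the vector field is compensated by the vanishing of $A$ (in the self-map and contraction steps) or of the difference $W$ (in the Lipschitz step). Getting the function space right—in particular encoding both the prescribed $2$-jet and the lower bound $A\le-\tau/2$ so that $\Phi_\mu$ is simultaneously a self-map and a contraction—is where the care lies; once the singular factor is shown to be absorbed by these vanishing conditions, the remainder is a routine contraction-plus-Gronwall argument.
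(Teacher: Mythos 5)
Your proposal is correct, but it takes a genuinely different route from the paper. The paper splits the work: existence comes from Peano's theorem applied to a continuous modification $\tilde{F}$ of the vector field (the denominator $A$ is replaced by explicit parabolic barriers $p(\tau)$, $q(\tau)$ outside the strip $p<A<q$), and then—since, as the paper notes, no modification of $F$ can be made Lipschitz in $A$ near the singular point—uniqueness and the Lipschitz dependence on $\mu$ are obtained separately, by studying the ratio $A_2/A_1$: writing $A_2'''-A_1'''=\frac{\tau}{6}\frac{A_1}{A_2}\bigl(\frac{A_2}{A_1}\bigr)'$, integrating by parts to produce logarithms, and closing a bootstrap bound $M\le\frac{2}{9}M+\abs{\delta\mu}$ on $M=\max\abs{A_2/A_1-1}$, followed by Gronwall away from the origin. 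You instead run a single Banach contraction in $C^2$ on a class that encodes both the prescribed $2$-jet and the barrier $A\le-\tau/2$, so existence and uniqueness come together; your key cancellation is the same algebraic identity $\frac{A_2'}{A_2}-\frac{A_1'}{A_1}=\frac{A_1W'-A_1'W}{A_1A_2}$, but you tame the singularity with Taylor-type bounds $\abs{W}\le\frac{s^2}{2}\sup\abs{W''}$, $\abs{W'}\le s\sup\abs{W''}$ rather than with logarithms, and your Lipschitz estimate is a direct singular-weight Gronwall argument on $W''$. This does not contradict the paper's non-Lipschitz remark: your contraction is a function-space statement on an invariant class whose elements' differences vanish to second order at $\tau=0$, not a phase-space Lipschitz property. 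Your route is more self-contained (no Peano/Arzel\`a--Ascoli, no separate uniqueness step) and gives the continuity in $\mu$ almost for free; the paper's route yields the explicit barrier bounds $p<A<q$ and the integrated-logarithm estimates that it reuses later (e.g., in the energy lemmas and in Lemma \ref{open: reach 0}). The only point you state informally—that any genuine solution lies in your class $\mathcal{B}$ on a short interval—is exactly the L'Hopital observation $\tau A'/(6A)\to 1/6$ that the paper also makes, and it is routine to fill in.
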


\begin{proof}
Letting $F(A,A',A'',\tau) := \ds \frac{\tau A'}{6A}$,  \eqref{ODE}
can be written as $A'''=F(A,A',A'',\tau)$. If $F(A,A',A'',\tau)$ were continuous in $\tau$ and Lipschitz in $(A,A',A'')$ throughout a neighborhood of $(A,A',A'',\tau) = (0,-1,\mu,0)$, then Lemma \ref{EUC} would be a trivial consequence of the standard theory on existence and uniqueness of solutions and their continuous dependence on initial conditions. Unfortunately, in our case $F(A,A',A'',\tau)$ is not even defined if $A = 0$. However, notice that if there exists a solution $A(\tau)$ to the IVP \eqref{IVP} then L'Hopital's rule implies:
$$
\lim_{\tau \rightarrow 0^+} \ds \frac{\tau A'(\tau)}{6A(\tau)} = \ds \frac{1}{6}.
$$
Therefore our strategy will be to remove the singularity at $(A, \tau) = (0, 0)$ by modifying $F$ in such a way that
the resulting modification is continuous in a neighborhood of the initial data and agrees with $F$
for solutions of $\eqref{IVP}$.

Fix $\mu_0 \in \R$ and $k>0$, and let $$p(\tau) := -\tau + \ds \frac{1}{2} (\mu_0 - k) \tau^2 
\quad \textrm{and} \quad q(\tau) := -\tau + \ds \frac{1}{2} (\mu_0 + k) \tau^2.$$ 
Given the initial conditions in \eqref{IVP}, $p(\tau)$ and $q(\tau)$ formally
give crude lower and upper bounds on $A(\tau)$ for $\tau$ close to $0$.
Now construct the following function:
$$
\tilde{F}(A,A',A'',\tau) := 
\begin{cases}
\ds \frac{\tau A'}{6p(\tau)} & \text{if $\tau > 0$ and $A \le p(\tau)$},\\
\ds \frac{\tau A'}{6A} & \text{if $\tau > 0$ and $p(\tau) < A < q(\tau)$},\\
\ds \frac{\tau A'}{6q(\tau)} & \text{if $\tau > 0$ and $q(\tau) \le A$},\\
-\ds \frac{A'}{6} & \text{if $\tau \le 0$}.\\
\end{cases}
$$
 $\tilde{F}$ is well-defined and continuous throughout a small neighborhood of $(A,A',A'',\tau) = (0,-1,\mu,0)$ for any $\mu \in \R$. Then we have the existence of a solution $A(\tau)$ to the IVP: 
 $$A''' = \tilde{F}(A,A',A'',\tau)\quad {\textrm{with} } \quad A(0) = 0,\ A'(0)=-1,\ A''(0)=\mu.$$ Furthermore, it is easy to verify that there exist $\tau_0 > 0$ and $0<\delta <k$ such that for each $\mu \in (\mu_0 - \delta, \mu_0 + \delta)$ the solution $A(\tau)$ is defined for all $\tau \in [0, \tau_0]$ and satisfies $p(\tau) < A(\tau) < q(\tau) < 0$ for all $\tau \in (0, \tau_0]$. It follows that the restriction of $A(\tau)$ to $[0, \tau_0]$ is in fact a solution to the IVP \eqref{IVP}.

Note that along any solution of \eqref{IVP}, $\ds \frac{\partial}{\partial A} \left( \ds \frac{\tau A'}{6A} \right) \rightarrow \infty$ as $\tau \rightarrow 0^+$. Consequently, it is not possible to modify $F$ to a function that is Lipschitz in $A$ in a neighborhood of $(A,A',A'',\tau) = (0,-1,\mu,0)$. Therefore, we have to establish the uniqueness of the solution by other means, namely the nearly hamiltonian structure of \eqref{ODE}.

For any $\mu_1, \mu_2 \in (\mu_0 - \delta, \mu_0 + \delta)$, let $A_1(\tau)$ and $A_2(\tau)$ be solutions to the IVP \eqref{IVP} with $A''_1(0) = \mu_1$ and $A''_2(0) = \mu_2$, respectively. Recall that our choice of $\delta$ and $\tau_0$ ensures $p(\tau) < A_{1,2}(\tau) < q(\tau) < 0$ for all $\tau \in (0, \tau_0]$. Thus we can take a sufficiently small $T \in (0, \min\{1, \tau_0\}]$, independent of $\mu_1$ and $\mu_2$, such that $A_{1,2}(\tau) < -\ds \frac{1}{2} \tau < 0$ for all $\tau \in (0, T]$ and
$$
M := \max_{\tau \in (0, T]} \left| \ds \frac{A_2(\tau)}{A_1(\tau)} - 1 \right| < \ds \frac{1}{2}.
$$
Then for any $\tau \in (0, T]$, 
\be \label{A2/A1}
\left| \ln{\left( \ds \frac{A_2(\tau)}{A_1(\tau)} \right)} \right| \le \ds \frac{M}{1 - M} \le 2M,
\ee
where the equalities hold if $M=0$.

Since both $A_1$ and $A_2$ are solutions to the ODE, we have 
$$
A'''_2 - A'''_1 = \ds \frac{\tau}{6} \left( \ds \frac{A'_2}{A_2} - \ds \frac{A'_1}{A_1} \right) =  \ds \frac{\tau}{6} \left( \ds \frac{A_1}{A_2} \right) \left( \ds \frac{A_2}{A_1} \right)'.
$$
Integrating the above equation gives
$$
A''_2(\tau) - A''_1(\tau) = \ds \frac{\tau}{6} \ln{\left( \ds \frac{A_2(\tau)}{A_1(\tau)} \right)} - \ds \frac{1}{6} \int_0^{\tau} \!\ln{\left( \ds \frac{A_2(s)}{A_1(s)} \right)}\, ds + \delta\mu,
$$ 
where $\delta\mu := \mu_2 - \mu_1$. Then by incorporating \eqref{A2/A1}, we obtain that for all $\tau \in (0, T]$,
\be\label{A2''-A1''}
\left| A''_2(\tau) - A''_1(\tau) \right| \le \ds \frac{2}{3} M \tau + |\delta\mu|.
\ee
Together with the initial conditions $A_1(0) = A_2(0) = 0$ and $A'_1(0) = A'_2(0) = -1$, \eqref{A2''-A1''} implies that for all $\tau \in (0, T]$,
$$
\left| A_2(\tau) - A_1(\tau) \right| \le \ds \frac{1}{9}M \tau^3 + \ds \frac{1}{2}|\delta\mu| \tau^2.
$$
Recall $T \le \min\{1, \tau_0\}$. It follows that for all $\tau \in (0, T]$,
\begin{align*}
\left| \ds \frac{A_2(\tau)}{A_1(\tau)} - 1 \right| \le{}& \ds \frac{\ds \frac{1}{9}M \tau^3 + \ds \frac{1}{2}|\delta\mu| \tau^2}{|A_1(\tau)|} \le \ds \frac{\ds \frac{1}{9}M \tau^3 + \ds \frac{1}{2}|\delta\mu| \tau^2}{\ds \frac{1}{2} \tau} = \ds \frac{2}{9}M \tau^2 + |\delta\mu| \tau \\
\le{}& \ds \frac{2}{9}M T^2 + |\delta\mu| T \le \ds \frac{2}{9}M + |\delta\mu|.
\end{align*}
By the definition of $M$, we have $M \le \ds \frac{2}{9}M + |\delta\mu|$, which implies
\be \label{ineq:M}
M \le \ds \frac{9}{7}|\delta\mu|.
\ee

When $\delta\mu = 0$ (i.e., $\mu_1 = \mu_2$), Inequality \eqref{ineq:M} requires $M=0$. Consequently, $A_2(\tau) = A_1(\tau)$ for all $\tau \in [0, T]$. This proves the uniqueness of the solution on the interval $[0, T]$. Recall that with $\mu_1 \in (\mu_0 - \delta, \mu_0 + \delta)$, $A_1(\tau) < 0$ for all $\tau \in [T, \tau_0]$ by our choice of $\tau_0$ and $\delta$. Then $A''' = \ds \frac{\tau A'}{6A}$ with the initial conditions $A(T) = A_1(T) < 0$, $A'(T) = A'_1(T)$, $A''(T) = A''_1(T)$ constitutes a regular initial value problem, to which $A_1(\tau)$ is the unique solution throughout the interval $[T, \tau_0]$ according to the standard existence and uniqueness theory. Altogether, we have shown the uniqueness of the solution on the full interval $[0, \tau_0]$.

Since the choice of $T$ is independent of $\mu_1$ and $\mu_2$, after combining \eqref{A2''-A1''} and \eqref{ineq:M} and integrating we can obtain a uniform constant $C_T > 0$ such that for any $\mu_1, \mu_2 \in (\mu_0 - \delta, \mu_0 + \delta)$,
$$
\left| A_2(\tau) - A_1(\tau) \right| + \left| A'_2(\tau) - A'_1(\tau) \right| + \left| A''_2(\tau) - A''_1(\tau) \right| \le C_T |\mu_2 - \mu_1|
$$
for all $\tau \in [0, T]$. Since $A_{1,2}(\tau) < q(\tau) < 0$ for all $\tau \in [T, \tau_0]$, by applying Gronwall's inequality on the interval $[T, \tau_0]$ we can extend the above inequality to the full interval $[0, \tau_0]$ after replacing $C_T$ with some larger $C > 0$.
\end{proof}

Let $A_0(\tau)$ be the solution to the IVP \eqref{IVP} with $A''_0(0) = \mu_0 \in \R$. Clearly, we can extend $A_0(\tau)$ to any $\bar\tau > \tau_0$ as long as $A_0(\tau) < 0$ for all $\tau \in (0, \bar\tau]$. Furthermore, by considering $A''' = \ds \frac{\tau A'}{6A}$ with the initial conditions specified at $\tau_0$ and set close to $(A_0(\tau_0), A'_0(\tau_0), A''_0(\tau_0))$, we can extend Lemma \ref{EUC} to the interval $[0, \bar\tau]$. In particular, by applying Gronwall's inequality on the interval $[\tau_0, \bar\tau]$, we obtain the following corollary.

\begin{cor} \label{dependence on mu}
For any $\bar\tau > 0$ such that $A_0(\tau) < 0$ for all $\tau \in (0, \bar\tau]$, there exist $\bar{C} > 0$ and a sufficiently small $\bar\delta > 0$ such that for any $\mu \in (\mu_0 - \bar\delta, \mu_0 + \bar\delta)$ the solution $A(\tau)$ with $A''(0) = \mu$ remains negative for all $\tau \in (0, \bar\tau]$ and satisfies
\be\label{A on mu}
\left| A(\tau) - A_0(\tau) \right| + \left| A'(\tau) - A'_0(\tau) \right| + \left| A''(\tau) - A''_0(\tau) \right| \le \bar{C} |\mu - \mu_0|
\ee
for all $\tau \in [0, \bar\tau]$.
\end{cor}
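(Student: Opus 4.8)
The plan is to bootstrap from Lemma \ref{EUC}, which already absorbs all the genuine difficulty (the singularity of $F$ at $\tau=0$), and simply propagate the resulting estimate across the compact interval $[\tau_0,\bar\tau]$, where the obstruction no longer appears. On $[0,\tau_0]$ the claimed inequality is precisely Lemma \ref{EUC} applied with $\mu_1=\mu_0$ and $\mu_2=\mu$, giving $|A-A_0|+|A'-A_0'|+|A''-A_0''|\le C|\mu-\mu_0|$ once we require $\bar\delta\le\delta$. Hence all the work is confined to $[\tau_0,\bar\tau]$, and in particular we never again have to confront the point $\tau=0$: we restart the continuation at $\tau=\tau_0>0$ using the data $(A(\tau_0),A'(\tau_0),A''(\tau_0))$ supplied by Lemma \ref{EUC}.

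First I would note that $A_0$ is continuous and strictly negative on the compact interval $[\tau_0,\bar\tau]$, so there is some $m>0$ with $A_0(\tau)\le -m$ throughout. On the slab $\set{(A,A',A'',\tau):\ \tau\in[\tau_0,\bar\tau],\ A\le -m/2}$ the right-hand side $F(A,A',A'',\tau)=\tau A'/(6A)$ is $C^1$, hence Lipschitz in $(A,A',A'')$ with a uniform constant $L=L(m,\bar\tau)$; the singularity at $A=0$ is harmless because we remain a fixed distance from it. Rewriting \eqref{ODE} as a first-order system for $(A,A',A'')$ then places us squarely in the regime of the classical continuous-dependence theory.

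Next I would run a continuity (open/closed) argument to confine the solution to this good slab. By Lemma \ref{EUC} the data at $\tau_0$ satisfy $|A(\tau_0)-A_0(\tau_0)|\le C|\mu-\mu_0|$, so after shrinking $\bar\delta$ we may assume $A(\tau_0)\le -3m/4$. Let $\tau^\ast$ be the supremum of those $\tau$ for which $A\le -m/2$ on $[\tau_0,\tau]$. On $[\tau_0,\tau^\ast]$ the Lipschitz bound $L$ is in force, so Gronwall's inequality on the first-order system yields a constant $\tilde C=\tilde C(L,\bar\tau,C)$ with
$$
\left|A-A_0\right|+\left|A'-A_0'\right|+\left|A''-A_0''\right|\le \tilde C\,|\mu-\mu_0|
\qquad\text{on } [\tau_0,\tau^\ast].
$$
Choosing $\bar\delta$ so small that $\tilde C\,\bar\delta<m/4$ forces $A\le A_0+m/4\le -3m/4<-m/2$ on $[\tau_0,\tau^\ast]$; this \emph{strict} inequality shows the defining set for $\tau^\ast$ cannot terminate before $\bar\tau$, so $\tau^\ast=\bar\tau$. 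Thus $A$ stays below $-3m/4<0$ on all of $[\tau_0,\bar\tau]$ and the Gronwall estimate holds throughout. Taking $\bar C:=\max\set{C,\tilde C}$ and letting $\bar\delta$ be the smallest of the finitely many smallness thresholds completes the proof.

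The main obstacle is not analytic but logical: the Gronwall estimate presupposes that $A$ remains in the slab $\set{A\le -m/2}$, yet staying in the slab is itself a consequence of that estimate being small. I expect the open/closed continuation above to be the only delicate point, since everything else reduces to the standard existence, uniqueness, and continuous-dependence theory the moment we are a fixed distance from the singular set $\set{A=0}\cup\set{\tau=0}$.
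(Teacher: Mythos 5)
Your proof is correct and takes essentially the same route as the paper: apply Lemma \ref{EUC} on $[0,\tau_0]$, then, since $A_0$ is bounded away from zero on the compact interval $[\tau_0,\bar\tau]$, restart at $\tau_0$ and use classical continuous dependence via Gronwall's inequality there — your open/closed bootstrap simply makes explicit a containment step the paper leaves implicit. One small imprecision: $F$ is not uniformly Lipschitz on your unbounded slab (its $A$-derivative grows with $|A'|$), but this is harmless because the Gronwall computation for the difference of the two solutions only requires $\sup_{[\tau_0,\bar\tau]}|A_0'|<\infty$ and both solutions staying a fixed distance from $A=0$.
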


\subsection{General behavior of solutions}
Now that we know \eqref{IVP} is well-posed, we demonstrate 
some properties of solutions when $\mu \ge 0$.
First, its solutions reach a first minimum in finite time.
Let 
\be\label{tau1}
\ds \tau_1 := \sup \left\{\tau>0: \sup_{0<s<\tau} A'(s) < 0 \right\}.
\ee
Note that $\tau_1$ depends upon $\mu$, though we supress this for notational simplicity.

\begin{lemma}\label{first min}
For any $\mu \ge 0$ we have $0<\tau_1<\infty$. In addition,
 $A'(\tau_1)=0$ and $A''(\tau_1) > 0$.  Moreover there exist
 $\mu_0, a_0,  a_1>0$ such that
 $\mu \ge \mu_0$ implies (a) $\ds \frac{1}{2\mu} \le \tau_1 \le \ds\frac{1}{\mu}$, (b) 
 $-\ds \frac{a_0}{\mu} \le A(\tau_1) \le -\frac{a_1}{\mu}$ and (c)
 $\mu \le A''(\tau_1) \le 2 \mu$.
\end{lemma}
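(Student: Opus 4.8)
The plan is to read off the qualitative assertions from the sign structure of \eqref{IVP} on the interval where $A$ and $A'$ are both negative, and then to obtain the large-$\mu$ asymptotics by a rescaling that turns \eqref{IVP} into a small perturbation of $A''' = 0$.

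First I would record the free monotonicity facts. On $(0,\tau_1)$ we have $A' < 0$ by the definition \eqref{tau1}, so $A$ is strictly decreasing and $A < A(0) = 0$ there; hence $A''' = \tau A'/(6A) > 0$ on $(0,\tau_1)$. Consequently $A''$ is strictly increasing, so $A''(\tau) > A''(0) = \mu \ge 0$, and in particular $A'' > 0$ and $A'$ is strictly increasing. Several assertions follow at once: if $\tau_1 < \infty$ then $A'(\tau_1) = 0$ (it cannot be negative without contradicting the supremum in \eqref{tau1}), and $A''(\tau_1) = \mu + \int_0^{\tau_1} A''' > 0$. Integrating $A'' > \mu$ gives $A'(\tau) > -1 + \mu\tau$, which forces $A'$ to vanish before $\tau = 1/\mu$; thus $\tau_1 \le 1/\mu$ (the upper bound in (a)) and also $A''(\tau_1) \ge \mu$ (the lower bound in (c)). For finiteness of $\tau_1$ for \emph{every} $\mu \ge 0$, I argue by contradiction: fixing any $\tau_0 \in (0,\tau_1)$, monotonicity gives $A''(\tau) \ge A''(\tau_0) =: m > 0$ for $\tau \ge \tau_0$, so $A'(\tau) \ge A'(\tau_0) + m(\tau - \tau_0)$; were $\tau_1 = \infty$ the right side would tend to $+\infty$, contradicting $A' < 0$. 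Hence $0 < \tau_1 < \infty$.

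The remaining quantitative claims (the lower bound on $\tau_1$, both bounds on $A(\tau_1)$, and the upper bound on $A''(\tau_1)$) require upper control, for which I rescale. Setting $\Phi(s) := \mu\, A(s/\mu)$ gives $\Phi(0) = 0$, $\Phi'(0) = -1$, $\Phi''(0) = 1$, and $\Phi''' = \tfrac{1}{6\mu^2}\, s\Phi'/\Phi$. As $\mu \to \infty$ this is a perturbation of $\Phi''' = 0$, whose solution with these data is the parabola $\Phi_0(s) = -s + \tfrac12 s^2$, which satisfies $\Phi_0'(1) = 0$, $\Phi_0(1) = -\tfrac12$, $\Phi_0''(1) = 1$, and $\Phi_0 < 0$ on $(0,2)$. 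The key claim is that $\Phi \to \Phi_0$ in $C^2$ on a fixed interval such as $[0,\tfrac32]$. Granting this, the first zero $s_1 = \mu\tau_1$ of $\Phi'$ converges to the transversal zero $s=1$ of $\Phi_0'$, so $s_1 \ge \tfrac12$ for $\mu$ large, giving the lower bound in (a); then $A(\tau_1) = \mu^{-1}\Phi(s_1) \to -\tfrac1{2\mu}$ yields (b) for suitable $a_0, a_1$; and $A''(\tau_1) = \mu\,\Phi''(s_1) \to \mu$ yields $A''(\tau_1) \le 2\mu$, completing (c).

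The main obstacle is the convergence $\Phi \to \Phi_0$, because the perturbation $\tfrac{1}{6\mu^2}\, s\Phi'/\Phi$ is singular exactly where $\Phi$ vanishes, namely at $s=0$. Away from the origin the factor $s\Phi'/\Phi$ stays bounded (on the parabola it equals $2(s-1)/(s-2)$, uniformly bounded on $[0,\tfrac32]$), so a Gronwall/bootstrap argument gives $|\Phi'' - 1| = O(\mu^{-2})$ and closes. Near $s=0$ I would handle the singularity exactly as in the proof of Lemma~\ref{EUC}—using rescaled comparison barriers and the estimate $|s\Phi'/\Phi| \le C$ coming from $\Phi(s) \sim -s$—to keep the perturbation integrable across the endpoint. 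This singular-endpoint estimate is the only genuinely technical point; everything else reduces to the monotonicity above and the explicit limit profile $\Phi_0$.
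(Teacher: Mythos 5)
Your proposal is correct, and its qualitative half essentially coincides with the paper's: both arguments rest on the observation that $A,A'<0$ on $(0,\tau_1)$ forces $A'''>0$, hence $A''$ strictly increasing, which yields $A'(\tau_1)=0$, $A''(\tau_1)>0$, $A''(\tau_1)\ge\mu$, and (via $A'\ge \mu\tau-1$) the upper bound $\tau_1\le 1/\mu$. In fact your contradiction argument for $0<\tau_1<\infty$, which uses only the strict monotonicity of $A''$, is cleaner than the paper's, which handles $\mu=0$ separately by appealing to $A'''(0)=1/6$ and omits the details. Where you genuinely diverge is the quantitative part. The paper never rescales: it notes that $A$ is convex on $(0,\tau_1)$ with $A(0)=0$, so the mean value theorem gives $|A(\tau)/\tau|>|A'(\tau)|$, i.e. $0<\tau A'/(6A)<1/6$, hence the key inequality $0<A'''<1/6$; two integrations then sandwich $A'$ between $\mu\tau-1$ and $\tau^2/12+\mu\tau-1$, and (a), (b), (c) follow by locating zeros of these polynomials and integrating once more. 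Your rescaling $\Phi(s)=\mu A(s/\mu)$ and convergence to the parabola $\Phi_0(s)=-s+s^2/2$ is the same estimate in disguise (the paper's sandwich reads $s-1\le\Phi'(s)\le s-1+s^2/(12\mu^2)$ in your variables), but you pay for it with the hardest step of your plan: a Gronwall/bootstrap with comparison barriers across the singular endpoint $s=0$, which you only sketch. That step is workable---the barrier construction of Lemma~\ref{EUC}, redone in the rescaled variables where the data $(0,-1,1)$ are fixed and the right-hand side is $\bigo(\mu^{-2})$, is uniform in $\mu\ge 1$---but it is unnecessary: the paper's convexity trick applies verbatim to $\Phi$ (since $\Phi''\ge 1$ and $\Phi(0)=0$ give $0<s\Phi'/\Phi<1$ on all of $(0,s_1)$), which bounds your perturbation by $1/(6\mu^2)$ globally and collapses the Gronwall/barrier machinery to three explicit integrations. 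So the two proofs share a skeleton, but the paper's one-line convexity observation replaces your perturbation analysis; adopting it would turn your outline into a complete and fully elementary proof, while your approach buys a more transparent picture of the large-$\mu$ limit and a uniform treatment of $\mu\ge 0$.
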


\begin{proof}
Clearly, $A'(\tau_1) = 0$ if $\tau_1$ is finite.  For $\tau \in I_1 := (0, \tau_1)$ (possibly $\tau_1 = \infty$), $A$ and $A'$ are negative and consequently $A''' > 0$.  Thus $A''$ is increasing.  This implies that $A''(\tau)> \mu \ge 0$ for $\tau \in I_1$ and also $A''(\tau_1) > 0$ if $\tau_1$ is finite.  Thus $A$ is concave up on $I_1$.  This together with the fact that $A(0) = 0$ implies via the mean value theorem that $|A(\tau)/\tau| > |A'(\tau)|$ for all $\tau \in I_1$ and so $\tau A'/A < 1$.  Thus we have 
\be \label{key} 
0 < A''' < 1/6
\ee 
for $\tau \in I_1$.  Integrating this inequality twice gives $\mu \tau - 1 \le  A' \le \tau^2/12 + \mu \tau - 1$ for all $\tau \in I_1$.  If $\mu > 0$, the lower bound on $A'$ becomes positive in finite time, implying that $\tau_1 < \infty$.  Examination of the zeroes of the upper and lower bounds gives the estimates for $\tau_1$ stated in the lemma.  Further integration of \eqref{key} gives the estimates on $A$ and $A''$.  If $\mu = 0$, the lower bound on $A'$ is not sufficient to conclude that $\tau_1 < \infty$.  However the fact that $A'''(0) = 1/6$ can be used to improve the lower bound in this case.  The details are standard and omitted.

\end{proof}

After reaching their first local minimum at $\tau = \tau_1$, solutions $A$ either reach zero in finite time
or alternately reach a local maximum (for which $A<0$) in finite time, as we now demonstrate.
Let
\be\label{tau2}
\ds \tau_2 := \sup \left\{ \tau>\tau_1: \inf_{\tau_1 < s< \tau} A'(s) \ge 0\ {\textrm{ and }} \sup_{\tau_1 < s< \tau} A(s) < 0\right\}.
\ee
As with $\tau_1$, $\tau_2$ implicitly depends upon $\mu$.
If this is finite, then we have either $A(\tau_2) = 0$ and $A'(\tau_2) \ge 0$ or $A(\tau_2) < 0$ and $A'(\tau_2) = 0$.  

\begin{lemma}
For all $\mu\ge0$ we have $\tau_1<\tau_2 < \infty.$
\end{lemma}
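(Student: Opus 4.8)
The plan is to treat the two inequalities $\tau_1<\tau_2$ and $\tau_2<\infty$ separately, in both cases exploiting the rigid sign structure that \eqref{ODE} enjoys on the interval where $A<0$ and $A'\ge 0$. The strict inequality $\tau_1<\tau_2$ is the easy half. By Lemma \ref{first min} we have $A'(\tau_1)=0$, $A''(\tau_1)>0$, and (since $A$ is decreasing and negative on $(0,\tau_1]$) also $A(\tau_1)<0$. Hence $A'(\tau)=A''(\tau_1)(\tau-\tau_1)+o(\tau-\tau_1)>0$ and $A(\tau)<0$ for $\tau$ in a right-neighborhood of $\tau_1$, so the set defining $\tau_2$ in \eqref{tau2} contains points to the right of $\tau_1$, and therefore $\tau_2>\tau_1$.

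For $\tau_2<\infty$ I would argue by contradiction, assuming $\tau_2=\infty$. On $(\tau_1,\tau_2)$ we have $A<0$, so \eqref{ODE} is a regular ODE there and the solution persists (by the extension of Lemma \ref{EUC} recorded in Corollary \ref{dependence on mu}); thus under $\tau_2=\infty$ the solution is defined with $A<0$ and $A'\ge 0$ on all of $(\tau_1,\infty)$. I then extract two pieces of structure. First, $A$ is non-decreasing and bounded above by $0$, hence converges to a finite limit $L\le 0$; in particular $A$ stays bounded. Second, on this interval $\tau>0$, $A'\ge 0$, and $A<0$ force $A'''=\tau A'/(6A)\le 0$, so $A''$ is non-increasing and satisfies $A''(\tau)\le A''(\tau_1)$.

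The crux is to show that $A''$ must dip strictly below zero. Suppose instead $A''\ge 0$ on all of $(\tau_1,\infty)$. Then $A'$ is non-decreasing; since $A'(\tau_1)=0$ and $A''(\tau_1)>0$ we have $A'(\tau_4)=:\eta>0$ for some $\tau_4>\tau_1$, whence $A'\ge\eta$ on $[\tau_4,\infty)$ and therefore $A(\tau)\ge A(\tau_4)+\eta(\tau-\tau_4)\to+\infty$, contradicting $A<0$. Hence there is $\tau_5>\tau_1$ with $A''(\tau_5)<0$. Because $A''$ is non-increasing, $A''(\tau)\le A''(\tau_5)<0$ for all $\tau\ge\tau_5$, and integrating gives $A'(\tau)\le A'(\tau_5)+A''(\tau_5)(\tau-\tau_5)\to-\infty$, which contradicts $A'\ge 0$. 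This contradiction proves $\tau_2<\infty$.

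The main obstacle, everything else being forced by the signs, is the bookkeeping needed to keep the argument honest where the solution is only known to be $C^3$ and where $A'$ or $A''$ could in principle vanish: one must confirm that the solution genuinely persists on all of $(\tau_1,\tau_2)$ (this is where the regularity of \eqref{ODE} for $A<0$, via Corollary \ref{dependence on mu}, is used) and that the monotonicity of $A''$ is effective rather than merely weak at isolated degeneracies. Both are handled by the observation that $A<0$ throughout $(\tau_1,\tau_2)$ pins $A'''$ to a definite sign, so the qualitative picture ``$A''$ decreases through zero, dragging $A'$ down past zero'' cannot be evaded.
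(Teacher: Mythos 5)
Your proof is correct and follows essentially the same route as the paper: $\tau_2>\tau_1$ from $A''(\tau_1)>0$, then a contradiction argument assuming $\tau_2=\infty$ in which the signs $A<0$, $A'\ge 0$ force $A'''\le 0$, hence $A''$ nonincreasing, hence $A''$ must cross below zero (else $A'$ stays bounded below by a positive constant and $A$ becomes unbounded), after which integrating $A''\le A''(\tau_5)<0$ drives $A'$ negative, a contradiction. Your write-up merely makes explicit two steps the paper leaves implicit (the persistence of the solution while $A<0$, and the unboundedness argument behind the paper's ``clearly $A''$ must eventually become negative''), which is fine but not a different proof.
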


\begin{proof}
Clearly $\tau_2>\tau_1$ since $A''(\tau_1) > 0$.  
Suppose that $\tau_2 = \infty$.  Since $A$ is bounded above
and nondecreasing for $\tau > \tau_1$, we have $A(\tau)$ convergent as $\tau \to \infty$.
Since we have $A<0$ and $A' \ge 0$ for all $\tau > \tau_1$, \eqref{IVP} implies
$A''' \le 0$ for all $\tau > \tau_1$ as well.  Thus $A''$ is nonincreasing.  Since it
 is initially positive and $A$ converges, clearly $A''$ must eventually become negative.
 Since $A''$ is nonincreasing, once it is negative it remains negative for all subsequent times.
Thus there is an $\ep>0$ and a value $\tau_3>\tau_1$ such
that $A''(\tau) < -\ep$ for all $\tau>\tau_3$.  Integrating this, however, indicates that $A'$
must eventually become negative,  a contradiction, and the lemma is shown.
\end{proof}


\subsection{Energy estimates and behavior of $A$ for $\mu=0$ or $\mu$ large}
Our next goal is to show that for $\mu$ large, solutions go to zero in finite time but when $\mu=0$ the solution
has a local negative maximum at $\tau_2$.  This requires the following energy estimates, similar to those 
for \eqref{stoopid model}:
\begin{lemma}
\label{energy}
Fix $\mu$ and take $\tau_1$ and $\tau_2$ as above.
Let
$$
V_1 (a):=
{\tau_1 \over 6}
a \ln \left({a \over A(\tau_1)}\right) + \left(A''(\tau_1) - {\tau_1 \over 6} \right) \left(a-A(\tau_1)\right).
$$
Then
for all $\tau_1 < \tau \le \tau_2$
$$
{1 \over 2} [A'(\tau)]^2 < V_1(A(\tau)).
$$

Moreover, take $\bar{\tau} := 6 A''(\tau_1)$ and set
$$
V_2(a):=
{\bar{\tau} \over 6}
a \ln \left({a \over A(\tau_1)}\right).
$$
If $\bar{\tau} \ge \tau_2$ then for all $\tau_1 < \tau \le \tau_2$
$$
V_2(A(\tau)) < {1 \over 2} [A'(\tau)]^2.
$$
\end{lemma}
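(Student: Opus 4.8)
The plan is to read each inequality as the positivity of an ``energy defect'' on $(\tau_1,\tau_2]$, and to establish positivity by showing the defect vanishes at $\tau_1$ and is monotone thereafter. Concretely I would set
$$E_1(\tau) := V_1(A(\tau)) - \tfrac12 (A'(\tau))^2, \qquad E_2(\tau) := \tfrac12 (A'(\tau))^2 - V_2(A(\tau)),$$
and aim to prove $E_1,E_2>0$ on $(\tau_1,\tau_2]$. Both vanish at $\tau_1$: this is because $A'(\tau_1)=0$, while $V_1(A(\tau_1))=V_2(A(\tau_1))=0$ by inspection (the logarithm $\ln(a/A(\tau_1))$ vanishes at $a=A(\tau_1)$, as does the linear factor $a-A(\tau_1)$ appearing in $V_1$).

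The engine of the argument is a single integral identity. Integrating \eqref{ODE} in the form $A''' = \tfrac{\tau}{6}\tfrac{A'}{A}$ from $\tau_1$ to $\tau$ gives $A''(\tau) = A''(\tau_1) + \tfrac16\int_{\tau_1}^\tau s\,\tfrac{A'(s)}{A(s)}\,ds$, while the observation $\tfrac{A'}{A} = \tfrac{d}{ds}\ln\LV A\RV$ together with the negativity of $A$ on $(\tau_1,\tau_2]$ yields $\ln(A(\tau)/A(\tau_1)) = \int_{\tau_1}^\tau \tfrac{A'}{A}\,ds$. Differentiating, each of $E_1',E_2'$ factors as $A'(\tau)$ times $\big(V_1'(A)-A''\big)$ or $\big(A''-V_2'(A)\big)$; substituting the two formulas above collapses these brackets into
$$V_1'(A(\tau)) - A''(\tau) = \tfrac16 \int_{\tau_1}^\tau (\tau_1 - s)\tfrac{A'}{A}\,ds, \qquad A''(\tau) - V_2'(A(\tau)) = \tfrac16 \int_{\tau_1}^\tau (s - \bar\tau)\tfrac{A'}{A}\,ds,$$
where I use $\tfrac{\bar\tau}{6}=A''(\tau_1)$, precisely the choice that cancels the linear term in $V_2$. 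This is exactly the near-Hamiltonian mechanism advertised for \eqref{stoopid model}: $V_1$ is the value of $\tfrac12(\cdot')^2$ along the autonomous flow obtained by freezing $\tau/6$ at its minimum $\tau_1/6$, and $V_2$ the analogue with $\tau/6$ frozen at $\bar\tau/6$, so $V_1$ and $V_2$ bracket \eqref{ODE} from above and below in the coefficient.

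With these identities the sign analysis is immediate on $(\tau_1,\tau_2]$: there $A<0$ and $A'\ge0$, so $\tfrac{A'}{A}\le0$; since $s>\tau_1$ we have $\tau_1-s<0$, and since the hypothesis $\bar\tau\ge\tau_2\ge s$ gives $s-\bar\tau\le0$. In each integrand the two nonpositive factors multiply to a nonnegative quantity, so both brackets are $\ge0$; multiplying by $A'(\tau)\ge0$ shows $E_1,E_2$ are nondecreasing, and since they start at $0$ they are $\ge0$.

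The one point requiring care, and the main (though mild) obstacle, is upgrading these to the strict inequalities claimed. This I would obtain from Lemma \ref{first min}: since $A''(\tau_1)>0$, $A'(\tau)>0$ on a right-neighborhood of $\tau_1$, on which $\tfrac{A'}{A}<0$ strictly, so the integrals above are strictly positive and $E_1,E_2$ strictly increase just past $\tau_1$; monotonicity then keeps them strictly positive through $\tau_2$. For $E_2$ one must note that even in the borderline case $\bar\tau=\tau_2$ the factor $s-\bar\tau$ is strictly negative on the bulk of $(\tau_1,\tau)$, so strict positivity of the integral is unaffected. A secondary care point is that every step uses $A<0$ on the closed interval up to $\tau_2$ (so that $\ln(A/A(\tau_1))$ is well defined and equals $\int \tfrac{A'}{A}$), which holds by the definition of $\tau_2$.
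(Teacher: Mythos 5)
Your proof is correct and takes essentially the same route as the paper's. Your factored derivative terms $V_1'(A)-A''=\tfrac16\int_{\tau_1}^\tau(\tau_1-s)\tfrac{A'}{A}\,ds$ and $A''-V_2'(A)=\tfrac16\int_{\tau_1}^\tau(s-\bar\tau)\tfrac{A'}{A}\,ds$ are precisely the paper's once-integrated bounds on $A''$ coming from the bracketing $\tfrac{\bar\tau}{6}\tfrac{A'}{A}\le A'''\le \tfrac{\tau_1}{6}\tfrac{A'}{A}$ on $(\tau_1,\tau_2)$, and your monotone-energy-defect step is its ``multiply by $A'$ and integrate from $\tau_1$'' step, with the same use of $A''(\tau_1)>0$ to obtain strict inequalities.
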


\begin{proof}
Let $I_2:=(\tau_1, \tau_2)$.  By definition of $\tau_2$, we have $A(\tau) < 0$
and $A'(\tau) \ge 0$ for all $\tau\in I_2$. If $\bar{\tau} \ge \tau_2$ then we have for all $\tau \in I_2$, 
$$
{\bar{\tau} A'(\tau) \over 6 A(\tau)} \le A'''(\tau) \le {\tau_1 A'(\tau) \over 6 A(\tau)},
$$
where the equalities hold only if $A'(\tau) = 0$ and the second inequality is true independent of $\bar{\tau}$. Integrating this from $\tau_1$ to $\tau$ gives
$$
A''(\tau_1) +{\bar{\tau} \over 6}\ln\left({A(\tau)\over A(\tau_1)}\right)
< A''(\tau) < 
A''(\tau_1)+ {\tau_1 \over 6}\ln\left({A(\tau)\over A(\tau_1)}\right)
$$
for all $\tau \in I_2$. Note that we obtain strict inequalities here because with $A''(\tau_1) > 0$ there is a small $\alpha > 0$ such that $A'(\tau) > 0$ for all $\tau \in (\tau_1, \tau_1+\alpha)$. If we multiply by $A'(\tau)$ and integrate once more from $\tau_1$ to any $\tau \in (\tau_1, \tau_2]$ then the conclusions of the lemma follow. In particular, $\bar{\tau}$ is irrelevant to the upper bound $V_1$.
\end{proof} 

\begin{lemma} \label{reach 0}
There exists $M>0$ such that $\mu\ge M$ 
implies $A(\tau_2)=0$ and $A'(\tau_2) > 0$.
\end{lemma}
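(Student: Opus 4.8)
The plan is to deduce the whole statement from the lower energy bound $V_2$ of Lemma~\ref{energy}. The crucial observation is that $V_2$ is \emph{strictly positive} on the relevant range: for $a\in(A(\tau_1),0)$ we have $a<0$ and $a/A(\tau_1)\in(0,1)$, so $\ln(a/A(\tau_1))<0$ and hence $V_2(a)=\tfrac{\bar\tau}{6}\,a\ln(a/A(\tau_1))>0$ (while $V_2(0)=0$ by the usual $r\ln r\to0$ computation). Granting the hypothesis $\bar\tau\ge\tau_2$ of Lemma~\ref{energy}, the inequality $V_2(A(\tau))<\tfrac12[A'(\tau)]^2$ forces $A'(\tau)\ne0$ wherever $A(\tau)<0$. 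In particular the local-maximum alternative for $\tau_2$ (namely $A(\tau_2)<0$ with $A'(\tau_2)=0$) is excluded, since it would give $V_2(A(\tau_2))>0$ yet $\tfrac12[A'(\tau_2)]^2=0$; therefore $A(\tau_2)=0$. Evaluating the same strict inequality at $\tau=\tau_2$ then yields $0=V_2(0)<\tfrac12[A'(\tau_2)]^2$, i.e. $A'(\tau_2)>0$. Thus the entire lemma reduces to verifying $\bar\tau\ge\tau_2$ once $\mu$ is large.

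To establish $\bar\tau\ge\tau_2$ I would feed in the estimates of Lemma~\ref{first min}, which for $\mu\ge\mu_0$ give $\bar\tau=6A''(\tau_1)\ge6\mu$, $\tau_1\le1/\mu$, and $|A(\tau_1)|\le a_0/\mu$. The idea is to run the $V_2$ bound on the \emph{a priori} admissible subinterval $(\tau_1,\tau^\ast]$, where $\tau^\ast:=\min\{\bar\tau,\tau_2\}$: there $\tau\le\bar\tau$, so the comparison $\tfrac{\bar\tau A'}{6A}\le A'''$ underlying Lemma~\ref{energy} holds unconditionally and gives $\tfrac12[A'(\tau)]^2>V_2(A(\tau))>0$, whence $A$ is strictly increasing. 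Converting this speed bound into a travel-time estimate, for every $\tau\in(\tau_1,\tau^\ast]$ with $A(\tau)<0$,
\[
\tau-\tau_1=\int_{A(\tau_1)}^{A(\tau)}\frac{dA}{A'}<\int_{A(\tau_1)}^{0}\frac{dA}{\sqrt{2V_2(A)}}=\sqrt{\frac{3\,|A(\tau_1)|}{\bar\tau}}\int_0^\infty\frac{e^{-v/2}}{\sqrt v}\,dv,
\]
the substitution $|A|=|A(\tau_1)|e^{-v}$ making the improper (but convergent) integral explicit. Using $\bar\tau\ge6\mu$ and $|A(\tau_1)|\le a_0/\mu$ this bounds the rise time by $C/\mu$ with $C=\sqrt{\pi a_0}$, so $\tau^\ast\le\tau_1+C/\mu\le(1+C)/\mu$.

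Comparing the two scales closes the argument: for $\mu$ large enough that $(1+C)/\mu<6\mu$ (i.e. $\mu^2>(1+C)/6$, which together with $\mu_0$ fixes $M$) we obtain $\tau^\ast\le(1+C)/\mu<6\mu\le\bar\tau$, so the minimum defining $\tau^\ast$ cannot be $\bar\tau$. Hence $\tau_2=\tau^\ast<\bar\tau$, and in particular $\bar\tau\ge\tau_2$. Substituting this back into the first paragraph then gives $A(\tau_2)=0$ and $A'(\tau_2)>0$.

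I anticipate the main obstacle to be a mild circularity: the very tool I want to exploit, the bound $V_2(A)<\tfrac12[A']^2$, is licensed by Lemma~\ref{energy} only under the hypothesis $\bar\tau\ge\tau_2$ that I am trying to prove. Restricting to $\tau^\ast=\min\{\bar\tau,\tau_2\}$ is precisely what breaks the circle, since on that interval $\tau\le\bar\tau$ holds by construction. A second delicate point is the behaviour near the crossing, where $A\to0$ and $A'''$ blows up; the saving grace is that the $V_2$ speed bound degenerates only like $\sqrt{|A|\,\ln(1/|A|)}$, so the travel-time integral converges (the factor $\int_0^\infty e^{-v/2}v^{-1/2}\,dv=\sqrt{2\pi}$ above), and no separate treatment of the singular final approach to $A=0$ is required.
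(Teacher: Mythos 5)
Your proof is correct and takes essentially the same route as the paper's: positivity of $V_2$ on $(A(\tau_1),0)$ settles the conclusion once $\bar{\tau}\ge\tau_2$ is known, and that reduction is obtained from the travel-time integral $\int_{A(\tau_1)}^{0}da/\sqrt{2V_2(a)}$ together with the large-$\mu$ asymptotics of Lemma~\ref{first min}, exactly as in the paper. Your $\tau^\ast=\min\{\bar{\tau},\tau_2\}$ device is a positive repackaging of the paper's argument by contradiction (which assumes $\bar{\tau}<\tau_2$ and integrates $1\le A'/\sqrt{2V_2(A)}$ from $\tau_1$ to $\bar{\tau}$), and your explicit constant $\int_0^\infty e^{-v/2}v^{-1/2}\,dv=\sqrt{2\pi}$ is precisely the paper's $K=\int_0^1 db/\sqrt{\left|b\ln b\right|}$ under the substitution $b=e^{-v}$.
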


\begin{proof}
First we show that if 
 $\bar{\tau} \ge \tau_2$ then we are done.  We know by the definition
of $\tau_2$ that either $A'(\tau_2)=0$ or $A(\tau_2)=0$.  Inspection of
$V_2(a)$ demonstrates that $V_2(a) > 0$ for $a \in (A(\tau_1),0)$ and 
is zero at the ends of that interval (see Figure~\ref{V2fig}).  Thus,  since $A$ is increasing on $(\tau_1,\tau_2)$
we have $A(\tau_2) \in (A(\tau_1),0]$ and
therefore $V_2(A(\tau_2)) \ge 0$.
We also know for $\tau \in (\tau_1,\tau_2)$
that $A' \ge 0$.  So in particular 
Lemma \ref{energy} implies $A'(\tau_2) > \sqrt{2 V_2(A(\tau_2))} \ge 0$. 
So, since $A'(\tau_2) \ne 0$ it must be the case that $A(\tau_2)=0$.
\begin{figure}
\includegraphics[width=0.9\textwidth]{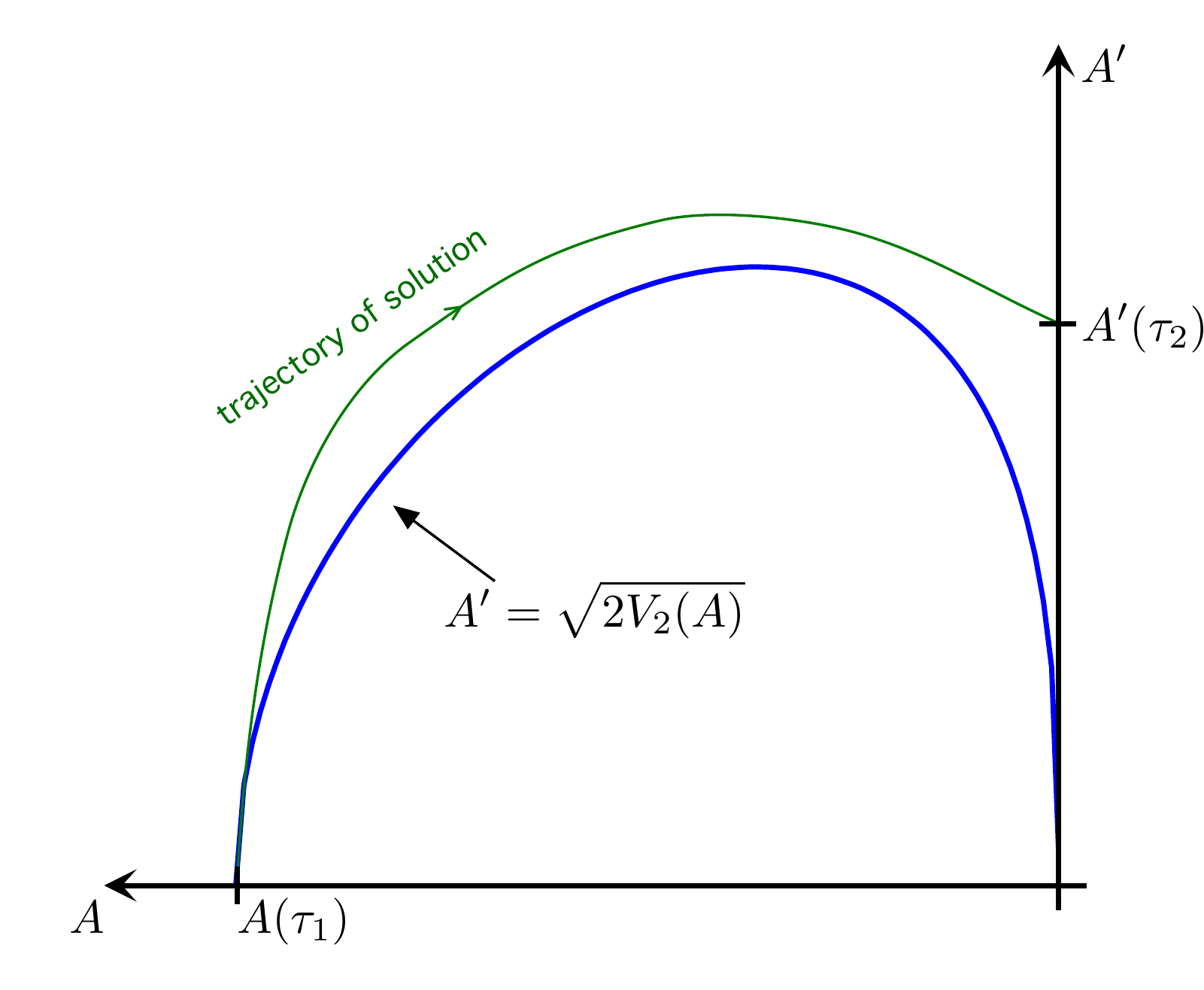}%
\caption{Sketch of $A'(\tau)$ vs $A(\tau)$ when $\mu$ is large.\label{V2fig}}
\end{figure}

On the other hand perhaps $\tau_2 > \bar{\tau}=6 A''(\tau_1)$.  
We will prove by contradiction that this is impossible when $\mu$ is large.
The estimate in the proof of Lemma \ref{energy}
gives:
$$
1 \le {A'(\tau) \over \sqrt{2 V_2(A(\tau))}}
$$
which is valid for $\tau_1 < \tau \le \bar{\tau}$ if we assume $\bar{\tau} < \tau_2$.  (Note that for $\mu$ big enough, the estimates on $\tau_1$ in Lemma \ref{first min} imply that $\bar{\tau} > \tau_1$.)
Integration from $\tau_1$ to $\bar{\tau}$ gives
$$
\bar{\tau}-\tau_1 \le 
  \int_{\tau_1}^{\bar{\tau}} {A'(s) ds \over \sqrt{2 V_2(A(s))}}
  = 
 \int_{A(\tau_1)}^{A(\bar{\tau})} {da \over \sqrt{2 V_2(a)}}.
$$
Substitution in from the definition of $V_2$ and $\bar{\tau}$ gives
$$
6 A''(\tau_1) -\tau_1 
\le 
{1 \over \sqrt{2 A''(\tau_1)}}\int_{A(\tau_1)}^{A(\bar{\tau})}
{d a \over \sqrt{ a \ln\left( {a / A(\tau_1)} \right)} }.
$$
A change of variables and the fact that $1>A(\bar{\tau})/A(\tau_1)>0$
implies:
$$
6 A''(\tau_1) -\tau_1 \le 
\sqrt{ |A(\tau_1)| \over 2 A''(\tau_1)}\int^{1}_{A(\bar{\tau})/A(\tau_1)}
{d b \over \sqrt{ \left \vert b \ln(b)  \right\vert} }
\le 
\sqrt{ |A(\tau_1)| \over 2 A''(\tau_1)}\int^{1}_{0}
{d b \over \sqrt{ \left \vert b \ln (b)  \right\vert } }.
$$
Letting $\ds K=\int^{1}_{0}
{d b \over \sqrt{\left\vert b \ln( b)  \right\vert} }<\infty$ we have
$$
6A''(\tau_1) - \tau_1 \le K \sqrt{ |A(\tau_1)| \over 2 A''(\tau_1)}.
$$
Now we employ the estimates in Lemma \ref{first min}
to find for all $\mu \ge \mu_0$:
$$
6 \mu - {1 \over \mu} \le K \sqrt{{ a_0 / \mu \over 2 \mu} }= {K \over \mu} \sqrt{a_0 \over 2}.
$$
For $\mu$ sufficiently large this is impossible and so the lemma is shown.

\end{proof}




\begin{lemma} \label{below 0}
If $\mu = 0$, then $A(\tau_2)<0$ and $A'(\tau_2)=0$.
\end{lemma}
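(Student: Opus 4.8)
The plan is to rule out the alternative $A(\tau_2)=0$ using the upper energy bound from Lemma~\ref{energy} and then to invoke the dichotomy recorded just after \eqref{tau2}. Recall that Lemma~\ref{energy} gives $\tfrac12[A'(\tau)]^2 < V_1(A(\tau))$ for all $\tau_1 < \tau \le \tau_2$, and that at $\tau_2$ one must have either $A(\tau_2)=0$ with $A'(\tau_2)\ge 0$, or $A(\tau_2)<0$ with $A'(\tau_2)=0$. Since the left-hand side of the energy bound is nonnegative, it suffices to show that $V_1(0)<0$: this makes $A(\tau_2)=0$ impossible, and the dichotomy then forces $A(\tau_2)<0$ and $A'(\tau_2)=0$.

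The heart of the argument is therefore the evaluation of $V_1$ at $a=0$. A direct computation from the definition gives $V_1(A(\tau_1))=0$ and
$$
V_1(0) = \left(A''(\tau_1)-\frac{\tau_1}{6}\right)\abs{A(\tau_1)},
$$
so, since $A(\tau_1)<0$, the sign of $V_1(0)$ is governed by whether $A''(\tau_1)$ is smaller or larger than $\tau_1/6$. Here I would use the input special to $\mu=0$: the bound \eqref{key}, namely $0 < A''' < 1/6$ on $I_1=(0,\tau_1)$, is strict, so integrating from $0$ and using $A''(0)=\mu=0$ yields
$$
A''(\tau_1) = \int_0^{\tau_1} A'''(s)\,ds < \frac{\tau_1}{6}.
$$
Hence $V_1(0)<0$, which is exactly what is required.

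To finish, I would apply the energy bound of Lemma~\ref{energy} at $\tau=\tau_2$: if $A(\tau_2)=0$, then $0\le \tfrac12[A'(\tau_2)]^2 < V_1(0) < 0$, a contradiction. Therefore $A(\tau_2)\neq 0$, and the dichotomy after \eqref{tau2} leaves only $A(\tau_2)<0$ with $A'(\tau_2)=0$, as claimed. If a quantitative statement is wanted one can say more: since $V_1''(a)=\tau_1/(6a)<0$ on $(A(\tau_1),0)$, the map $a\mapsto V_1(a)$ is strictly concave there, and since $V_1'(A(\tau_1))=A''(\tau_1)>0$ it is increasing at the left endpoint; combined with $V_1(0)<0$ this produces a unique interior zero $a_*\in(A(\tau_1),0)$ with $V_1<0$ on $(a_*,0)$, so in fact $A(\tau_2)\le a_*$, i.e.\ $A(\tau_2)$ is bounded strictly below $0$. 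The only nontrivial point is the strict endpoint inequality $A''(\tau_1)<\tau_1/6$; once that is secured the conclusion is immediate, so I anticipate no serious obstacle beyond carefully preserving strictness when integrating \eqref{key}.
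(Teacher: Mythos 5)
Your proof is correct and follows essentially the same route as the paper's: both hinge on the upper energy bound $V_1$ from Lemma~\ref{energy} together with the strict inequality $A''(\tau_1)<\tau_1/6$ (obtained by integrating \eqref{key} with $A''(0)=\mu=0$), which forces $V_1(0)<0$ and thereby rules out $A(\tau_2)=0$. The only cosmetic difference is that you derive the contradiction by evaluating the energy inequality directly at $\tau_2$, whereas the paper shows $A(\tau)$ stays below the interior zero $k$ of $V_1$ (your $a_*$), a step you reproduce in your closing remark.
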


\begin{figure}
\includegraphics[width=0.9\textwidth]{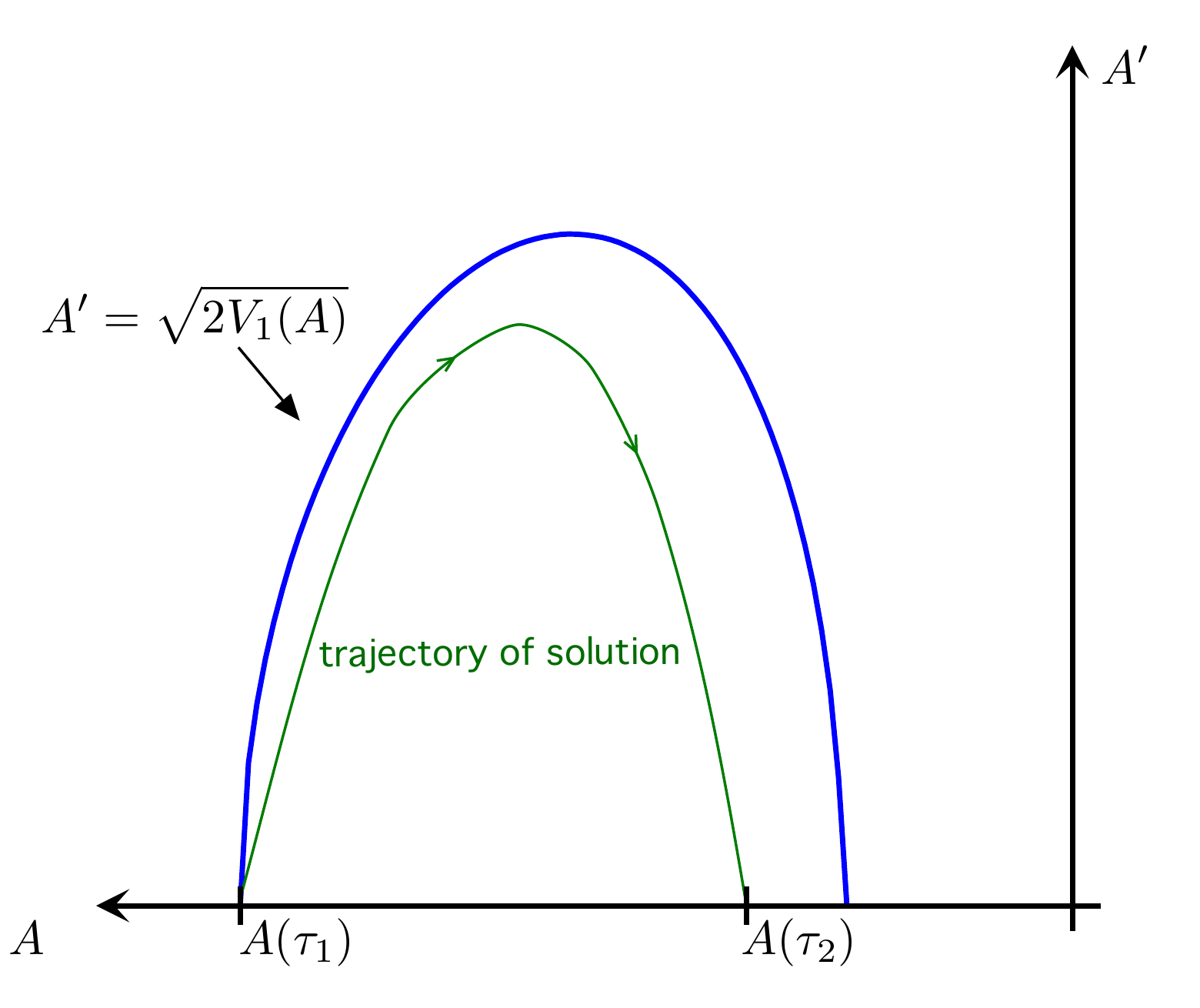}%
\caption{Sketch of $A'(\tau)$ vs $A(\tau)$ when $\mu=0$.\label{V1fig}}
\end{figure}

\begin{proof}
Proving the lemma amounts to showing that $A(\tau_2) \ne 0$. 
Notice that  $V_1(0) = A(\tau_1) \left(\tau_1/6-A''(\tau_1) \right)$.
Let $\rho(\tau) = \tau/6 - A''(\tau)$.  $\rho(0) = 0$, since $\mu = 0$.  Also $\rho'(\tau) = 1/6 - A'''(\tau)$.  In the proof of Lemma \ref{first min} we saw that $A''' < 1/6$ for $\tau \in I_1$ (see \eqref{key}) and so $\rho' > 0$. Consequently $\tau_1/6 - A''(\tau_1) > 0$.  Thus, since $A(\tau_1)<0$, we have
$
V_1(0) < 0.
$
Moreover,  $V_1(A(\tau_1)) = 0$ and $V'_1(A(\tau_1))=A''(\tau_1)>0$.
Thus there is a point $k \in (A(\tau_1),0)$ at which $V_1(k)=0$.  See Figure~\ref{V1fig}.

Since for $\tau \in [\tau_1,\tau_2]$ we have $0 \ge A(\tau) \ge A(\tau_1)$ and
$0 \le A'(\tau) \le \sqrt{2 V_1(A(\tau))}$ we must therefore have
$0>k\ge A(\tau)$.  Since $\tau_2$ is finite for $\mu = 0$ and
$A$ is bounded from zero for all $\tau_1 < \tau <\tau_2$, the definition \eqref{tau2} of $\tau_2$ 
requires $A'(\tau_2) = 0$.

\end{proof}

\subsection{Openness of qualitative behavior}
The next pair of results show that slightly varying $\mu$ does not change the qualitative behavior of $A$ at $\tau_2$.  That is to say,
if $\mu_0$ is such that $A(\tau_2)=0$ then there is an open neighborhood of $\mu_0$ where the corresponding solutions enjoy this same property.  Likewise, if $\mu_0$ such that $A'(\tau_2) = 0$, then there is an open neighborhood of $\mu_0$ where the corresponding solutions do the same.
Let $\hA(\tau)$ be the solution to the IVP \eqref{IVP} with $\hA''(0) = \hat{\mu} \ge 0$, and define $\htau_2$ for $\hA(\tau)$ as in \eqref{tau2}.

\begin{lemma} \label{open: reach 0}
Suppose $\hA(\htau_2) = 0$ and $\hA'(\htau_2) = 2h > 0$. Then for any $\epsilon > 0$ there exists a $\delta > 0$ such that for any $\mu \in (\hat{\mu} - \delta, \hat{\mu} + \delta)$, the solution $A(\tau)$ to the IVP \eqref{IVP} with $A''(0) = \mu$ reaches zero at $\tau_2 < \htau_2 + \epsilon$.
\end{lemma}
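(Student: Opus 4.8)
The plan is to exploit the fact that a \emph{transversal} crossing is a stable configuration. Because $\hA'(\htau_2) = 2h > 0$, continuity gives a small $\sigma \in (0, \epsilon)$ with $\hA'(\tau) > h$ on $[\htau_2 - \sigma, \htau_2]$; set $\tau^\dagger := \htau_2 - \sigma$, so that $\hA(\tau^\dagger) < 0$ and $\hA'(\tau^\dagger) > h$. Since $\hA < 0$ on $(0, \tau^\dagger]$, Corollary \ref{dependence on mu} applies on $[0, \tau^\dagger]$ and shows that $A, A', A''$ converge to $\hA, \hA', \hA''$ uniformly on $[0,\tau^\dagger]$ as $\mu \to \hat{\mu}$. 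Choosing $\delta$ small we may therefore assume $A(\tau^\dagger) < 0$ and $A'(\tau^\dagger) > h/2 > 0$; in particular $\tau^\dagger$ lies in the interval $(\tau_1, \tau_2)$ associated to $A$, on which $A < 0$ and $A' \ge 0$.

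The difficulty is that Corollary \ref{dependence on mu} cannot be pushed up to $\htau_2$: at a transversal zero the right-hand side $\tau A'/(6A)$ blows up, so $A''' \to -\infty$ and $A''$ diverges logarithmically, and no Gronwall argument survives the singularity. Instead I would control the crossing directly with a localized energy estimate of the type used in Lemma \ref{energy}. On the set where $A < 0$ and $A' > 0$ one may use $a := A$ as independent variable; writing $p(a) := \frac12 (A')^2$, a short computation gives
\be
\frac{d^2 p}{d a^2} = \frac{A'''}{A'} = \frac{\tau}{6a} < 0 ,
\ee
so $p$ is a \emph{concave} function of $a$ on $[a_0, 0)$, where $a_0 := A(\tau^\dagger) < 0$, with $p(a_0) = \tfrac12 (A'(\tau^\dagger))^2$ and $p'(a_0) = A''(\tau^\dagger)$. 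Integrating the bound $d^2 p/da^2 \ge \tau_{\max}/(6a)$, where $\tau_{\max} := \htau_2 + \epsilon$ is an a priori ceiling on the crossing time, produces a concave lower barrier $\tilde E(a) \le p(a)$ with matching data at $a_0$ and with endpoint value
\be
\tilde E(0) = \tfrac12 (A'(\tau^\dagger))^2 + |a_0|\Big(A''(\tau^\dagger) - \tfrac{\tau_{\max}}{6}\Big).
\ee

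The point is that for the reference solution $|a_0| = |\hA(\tau^\dagger)| = \bigo(\sigma)$ while $A''(\tau^\dagger)$ diverges only logarithmically, so $|a_0| A''(\tau^\dagger) \to 0$ and $\tilde E(0) \to \tfrac12(2h)^2 = 2h^2 > 0$ as $\sigma \to 0$; fixing $\sigma$ small makes $\tilde E(0) > 0$, and continuous dependence then keeps $\tilde E(0) > 0$ (while $\tilde E(a_0) = p(a_0) > 0$ automatically) for all $\mu$ in a neighborhood of $\hat{\mu}$. Since $\tilde E$ is concave and positive at both endpoints, $\tilde E \ge c_1 > 0$ on all of $[a_0, 0]$, whence $A' = \sqrt{2p} \ge \sqrt{2\tilde E} \ge \sqrt{2 c_1}$ for as long as $A < 0$. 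This uniform lower bound forces $A$ to increase all the way to $0$, ruling out the alternative $A'(\tau_2) = 0$, $A(\tau_2) < 0$ permitted by the definition \eqref{tau2} of $\tau_2$; hence $A$ reaches zero at $\tau_2$. Moreover the crossing time obeys $\tau_2 - \tau^\dagger = \int_{a_0}^0 da/A' \le |a_0|/\sqrt{2 c_1} = \bigo(\sigma)$, which is $< \epsilon$ for $\sigma$ small, giving $\tau_2 < \htau_2 - \sigma + \epsilon < \htau_2 + \epsilon$; this in turn confirms $\tau_2 < \tau_{\max}$, so the a priori ceiling is self-consistent. The main obstacle, as indicated, is pushing the estimate through the singularity at $A = 0$, and the concavity of $p(a) = \tfrac12 (A')^2$ is exactly what converts the two endpoint bounds into a positive lower bound for $A'$ across the entire crossing interval.
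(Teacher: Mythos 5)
Your proposal is correct, but it takes a genuinely different route from the paper's. The paper integrates the ODE once from $0$ to obtain the global identity \eqref{Int}, shows $\hA\hA''\to 0$ at the transversal zero (via the same logarithmic bound on $\hA''$ that you invoke), passes to the limit to evaluate $-2h^2+\tfrac12=-\tfrac16\int_0^{\htau_2}\hA$, and then argues by contradiction and compactness: if a sequence of solutions with $\mu_n\to\hat\mu$ stayed negative past $\htau_2+\epsilon$, then at stopping times $\htau_2+\omega_k$ where $A_{n_k}'=h$ and $A_{n_k}''\le 0$, the left side of \eqref{Int} would remain $\ge-\tfrac12 h^2+\tfrac12$ while the right side converges to $-2h^2+\tfrac12$, a contradiction. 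You instead argue directly in the phase plane: the observation that $p(a)=\tfrac12 (A')^2$ satisfies $d^2p/da^2=\tau/(6a)<0$ is another face of the near-Hamiltonian structure the paper exploits (compare Lemma \ref{energy}), and your concave lower barrier, whose endpoint value tends to $2h^2$ because $|a_0|\,|A''(\tau^\dagger)|=\bigo(\sigma|\ln\sigma|)\to 0$, yields a uniform positive lower bound on $A'$ across the entire crossing, hence a quantitative $\bigo(\sigma)$ bound on the crossing time. Both proofs rest on the same two ingredients --- Corollary \ref{dependence on mu} up to a time $\tau^\dagger$ just short of $\htau_2$, and the logarithmic divergence of $A''$ at a transversal zero, which is what kills the products $\hA\hA''$ (paper) and $|a_0|A''(\tau^\dagger)$ (yours). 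What your version buys: it is direct (no subsequence extraction or contradiction), it is quantitative in the crossing time, and it makes the stability mechanism of transversal crossings transparent. What it costs: two details need to be written out. First, the logarithmic bound on $\hA''(\tau^\dagger)$ requires an upper bound such as $\hA'\le 3h$ near $\htau_2$ in addition to the lower bound $\hA'>h$ you state (both follow from continuity of $\hA'$ at $\htau_2$; the resulting estimate is precisely the one appearing in the paper's proof). Second, the ``a priori ceiling'' $\tau\le\tau_{\max}$ used to define the barrier should be formalized as a first-exit-time bootstrap; as you observe, it closes because the crossing time is $\bigo(\sigma)\ll\epsilon$. Neither point is a gap, only routine detail.
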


\begin{proof}
Since $\hA(\htau_2) = 0$, the right hand side of \eqref{ODE} is undefined at this point and so we cannot directly use our results about
continuous dependence on $\mu$ from Corollary \ref{dependence on mu} to prove this result.  Instead we will have to rely instead on the
nearly hamiltonian structure of \eqref{ODE}.

Integrating the ODE of \eqref{IVP} with the initial conditions $A(0) = 0$, $A'(0) = -1$, and $A''(0) = \mu$, we obtain that 
\begin{equation} \label{Int}
A(\tau) A''(\tau) - \ds \frac{1}{2} \left(A'(\tau)\right)^2 + \ds \frac{1}{2} = \ds \frac{1}{6} \tau A(\tau) - \ds \frac{1}{6} \int_0^{\tau} A(s)\, ds
\end{equation}
as long as $A < 0$ on the interval $(0, \tau)$. 

Since $\hA(\htau_2) = 0$ and $\hA'(\htau_2) =2h > 0$, we can choose a small $\alpha > 0$ such that $|\hA(\tau)| \ge |h (\tau - \htau_2)|$ and $\hA'(\tau) < 3h$ for all $\tau \in [\htau_2 - \alpha, \htau_2]$. It follows that 
$$
\left| \hA''(\tau) - \hA''( \htau_2 - \alpha ) \right| \le \int_{\htau_2 - \alpha}^{\tau} \left| \ds \frac{s \hA'(s)}{6 \hA(s)} \right| ds \le \int_{\htau_2 - \alpha}^{\tau} \left| \ds \frac{\htau_2 3h}{6 h (s - \htau_2)} \right| ds = \ds \frac{1}{2} \htau_2 \ln{ \ds \frac{\alpha}{\htau_2 - \tau} }.
$$
Therefore, $\hA(\tau) \hA''(\tau) \rightarrow 0$ as $\tau \rightarrow \htau_2^-$. Applying \eqref{Int} to $\hA$ and taking the limits of both sides as $\tau \rightarrow \htau_2^-$, we obtain
$$
-2h^2 + \ds \frac{1}{2} = -\ds \frac{1}{6} \int_0^{\htau_2} \hA(s)\, ds.
$$

Suppose there exist an $\epsilon > 0$ and a sequence $\{ \mu_n \} \rightarrow \hat{\mu}$ such that for any $n$, the solution $A_n(\tau)$ to the IVP \eqref{IVP} with $A_n''(0) = \mu_n$ is below zero for all $\tau \in (0, \htau_2 + \epsilon)$. Since $\hA(\tau) < 0$ for all $\tau \in (0, \htau_2)$, we can apply \eqref{A on mu} of Corollary \ref{dependence on mu} to obtain a subsequence $\{ \mu_{n_k} \}$ and some $Q > 0$ such that for each $k > Q$,
$$
|A_{n_k}(\tau) - \hA(\tau)| + |A'_{n_k}(\tau) - \hA'(\tau)| + |A''_{n_k}(\tau) - \hA''(\tau)| < \ds \frac{1}{k}
$$
for all $\tau \in [0, \htau_2 - \ds \frac{1}{k}]$. Clearly, $A'_{n_k}(\htau_2 - \ds \frac{1}{k}) \rightarrow \hA'(\htau_2) = 2h$. Without loss of generality, we assume that $A'_{n_k}(\htau_2 - \ds \frac{1}{k}) > h$ for all $k > Q$. Let
$$
\omega_k := \sup \left\{ \tau \in (\htau_2 - \ds \frac{1}{k}, \htau_2 + \epsilon): \inf_{\htau_2 - \ds \frac{1}{k} < s < \tau} A'_{n_k}(s) \ge h \right\} - \htau_2.
$$
Note that $\omega_k \rightarrow 0$ as $k \rightarrow \infty$. Otherwise, for some very large $k$, $A_{n_k}(\tau)$ would reach zero at some $\tau \in (\htau_2 - \ds \frac{1}{k}, \htau_2 + \omega_k)$ since $A_{n_k}(\htau_2 - \ds \frac{1}{k})$ converges to $\hA(\htau_2) = 0$ from below. Without loss of generality, we assume that $\htau_2 + \omega_k < \htau_2 + \epsilon$ for all $k > Q$. Then the definition of $\omega_k$ implies $A'_{n_k}(\htau_2 + \omega_k) = h$ and $A''_{n_k}(\htau_2 + \omega_k) \le 0$ for all $k > Q$. Applying \eqref{Int} to $A_{n_k}$ at $\tau = \htau_2 + \omega_k$ yields
$$
A_{n_k}(\htau_2 + \omega_k) A''_{n_k}(\htau_2 + \omega_k) - \ds \frac{1}{2} h^2 + \ds \frac{1}{2} = \ds \frac{1}{6} (\htau_2 + \omega_k) A_{n_k}(\htau_2 + \omega_k) - \ds \frac{1}{6} \int_0^{\htau_2 + \omega_k}\! A_{n_k}(s)\, ds.
$$
Since $0 > A_{n_k}(\htau_2 + \omega_k) > A_{n_k}(\htau_2 - \ds \frac{1}{k}) \rightarrow \hA(\htau_2) = 0$, the right-hand side converges to $-\ds \frac{1}{6} \int_0^{\htau_2} \hA(s)\, ds = -2h^2 + \ds \frac{1}{2}$. However, for all $k > Q$ the left-hand side is greater than or equal to $- \ds \frac{1}{2} h^2 + \ds \frac{1}{2}$. This is a contradiction.
\end{proof}

In the next lemma, we still assume $\hA''(0) = \hat{\mu} \ge 0$.

\begin{lemma} \label{open: below 0}
Suppose $\hA(\htau_2) < 0$ and $\hA'(\htau_2) = 0$. Then there exists a $\delta > 0$ such that for any $\mu \in (\hat{\mu} - \delta, \hat{\mu} + \delta)$, the solution $A(\tau)$ to the IVP \eqref{IVP} with $A''(0) = \mu$ satisfies $A(\tau_2) < 0$ and $A'(\tau_2) = 0$.
\end{lemma}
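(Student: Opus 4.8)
The plan is to exploit the feature that distinguishes this case from Lemma~\ref{open: reach 0}: here $\hA$ stays strictly negative at $\htau_2$, so the right-hand side of \eqref{ODE} is regular in a full neighborhood of $\htau_2$ and the continuous dependence estimate \eqref{A on mu} of Corollary~\ref{dependence on mu} applies directly up to a time slightly past $\htau_2$. The only genuine content is to show that the local maximum of $\hA$ at $\htau_2$ is \emph{nondegenerate}, i.e. $\hA''(\htau_2) < 0$; once this is known, the vanishing of $A'$ at $\htau_2$ is a transversal crossing that persists under small perturbation of $\mu$.

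First I would establish $\hA''(\htau_2) < 0$, which is the crux. Since $\hA'(\htau_2)=0$ and $\hA(\htau_2) < 0$, the equation forces $\hA'''(\htau_2) = \htau_2 \hA'(\htau_2)/(6\hA(\htau_2)) = 0$; thus if $\hA''(\htau_2)$ also vanished, the state $(\hA,\hA',\hA'')$ at $\htau_2$ would be $(\hA(\htau_2),0,0)$. But the constant function $A\equiv \hA(\htau_2)$ solves $6AA''' = \tau A'$ (both sides vanish) and carries exactly this data at $\htau_2$. On $[\htau_1,\htau_2]$ (writing $\htau_1$ for the first minimum of $\hA$) the profile $\hA$ increases monotonically from $\hA(\htau_1)$ to $\hA(\htau_2)<0$, hence is bounded away from zero, so $F=\tau A'/(6A)$ is smooth and locally Lipschitz throughout a tube around the constant solution there. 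Uniqueness for this regular IVP would then force $\hA \equiv \hA(\htau_2)$ on $[\htau_1,\htau_2]$, contradicting $\hA''(\htau_1)>0$ from Lemma~\ref{first min}. Since $\htau_2$ is a zero of $A'$ approached from above we have $\hA''(\htau_2)\le 0$, and the preceding argument upgrades this to the strict inequality.

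Next I would fix $\epsilon_0 > 0$ small enough that $\hA$ extends to $[\htau_2,\htau_2+\epsilon_0]$ (possible as $\hA(\htau_2)<0$) with $\hA<0$ and $\hA''<0$ on $[\htau_2-\epsilon_0,\htau_2+\epsilon_0]$; there $\hA'$ is strictly decreasing and changes sign from positive to negative across its single zero at $\htau_2$. Applying Corollary~\ref{dependence on mu} with $\bar\tau=\htau_2+\epsilon_0$ (legitimate since $\hA<0$ on $(0,\htau_2+\epsilon_0]$) produces $\bar C,\bar\delta>0$ such that for $\mu\in(\hat\mu-\delta,\hat\mu+\delta)$ with $\delta\le\bar\delta$ chosen small, the solution $A$ remains negative on $(0,\htau_2+\epsilon_0]$ and its first three derivatives lie within $\bar C\delta$ of those of $\hA$. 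Shrinking $\delta$ makes $A''<0$ throughout $[\htau_2-\epsilon_0,\htau_2+\epsilon_0]$ while $A'(\htau_2-\epsilon_0)>0>A'(\htau_2+\epsilon_0)$, so $A'$ has a unique transversal zero $\sigma\in(\htau_2-\epsilon_0,\htau_2+\epsilon_0)$ with $A(\sigma)<0$.

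Finally I would verify that $\sigma$ is exactly $\tau_2$ for the perturbed solution, i.e. that no earlier event occurs on $(\tau_1,\sigma)$. Because $A<0$ on all of $(0,\htau_2+\epsilon_0]$, the alternative $A(\tau_2)=0$ cannot be triggered before $\sigma$, so it suffices to exclude an earlier zero of $A'$. The perturbed first minimum $\tau_1$ lies near $\htau_1$ with $A''(\tau_1)$ near $\hA''(\htau_1)>0$, giving $A'>0$ on a uniform right-neighborhood of $\tau_1$; on any compact subinterval of $(\htau_1,\htau_2-\epsilon_0]$ the function $\hA'$ is bounded below by a positive constant, so continuous dependence yields $A'>0$ there too. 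Hence $A'>0$ on $(\tau_1,\sigma)$, so $\sigma=\tau_2$ and therefore $A(\tau_2)<0$ and $A'(\tau_2)=0$. The main obstacle is the nondegeneracy step $\hA''(\htau_2)<0$; once it is in hand, the remainder is a routine transfer via continuous dependence.
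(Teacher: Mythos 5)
Your proof is correct, and its skeleton matches the paper's: both arguments reduce the lemma to the nondegeneracy statement $\hA''(\htau_2)<0$, then extend $\hA$ slightly past $\htau_2$ (legitimate because $\hA(\htau_2)<0$ keeps \eqref{ODE} regular there) and invoke Corollary \ref{dependence on mu} to transfer the transversal sign change of $A'$ to nearby $\mu$. The genuine difference is how the crux is established. The paper's proof is a direct sign argument: since $\hA'(\htau_1)=\hA'(\htau_2)=0$ and $\hA''(\htau_1)>0$, there is $\htau_c\in(\htau_1,\htau_2)$ with $\hA''(\htau_c)=0$, and on $[\htau_c,\htau_2)$ one has $\hA'''=\tau\hA'/(6\hA)<0$ because $\hA'>0$ and $\hA<0$, so $\hA''$ decreases strictly and $\hA''(\htau_2)<0$. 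You instead obtain $\hA''(\htau_2)\le 0$ from the one-sided limit and rule out equality by a backward-uniqueness argument: if $\hA''(\htau_2)=0$, then $\hA$ and the constant solution $A\equiv\hA(\htau_2)$ share Cauchy data at $\htau_2$ and both remain in the region $\left\{A<0\right\}$ where the right-hand side of \eqref{ODE} is locally Lipschitz, so $\hA$ would be constant on $[\htau_1,\htau_2]$, contradicting $\hA''(\htau_1)>0$ from Lemma \ref{first min}. Both are valid; the paper's computation is shorter and stays within the sign/monotonicity toolkit used throughout Section \ref{Construct}, whereas your argument is softer---it needs no integration and would survive in variants of the equation where the sign of the nonautonomous factor is less transparent, at the cost of invoking uniqueness theory for the regular ODE. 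A secondary difference: you spell out the bookkeeping showing that the perturbed transversal zero $\sigma$ of $A'$ really is $\tau_2$ (no earlier vanishing of $A'$ or of $A$), assembled from a uniform right-neighborhood of $\tau_1$, a positive lower bound for $\hA'$ on compact subintervals, and strict monotonicity of $A'$ near $\sigma$; the paper compresses this into the choice of $\beta$ and the final sentence of its proof.
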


\begin{proof}
Define $\htau_1$ for $\hA(\tau)$ as in \eqref{tau1}. Then $\hA''(\htau_1) > 0$ since $\hA''(0) = \hat{\mu} \ge 0$. It follows that $\hA'(\htau_1) = \hA'(\htau_2) = 0$ and $\hA'(\tau) > 0$ for all $\tau \in (\htau_1, \htau_2)$. Thus there is a $\htau_c \in (\htau_1, \htau_2)$ such that $\hA''(\htau_c) = 0$. Furthermore, since $\hA''' = \ds \frac{\tau \hA'}{6 \hA} < 0$ for all $\tau \in [\htau_c, \htau_2)$, we have $\hA''(\htau_2) < 0$. This allows us to choose a small $\alpha > 0$ such that $\hA(\tau) < 0$ for all $\tau \in (0, \htau_2 + \alpha]$, $\hA'(\htau_2 + \alpha) < 0$, and $\hA''(\htau_2 + \alpha) < 0$. Take a small $\beta > 0$. Then Corollary \ref{dependence on mu} guarantees the existence of a $\delta > 0$ such that for any $\mu \in (\hat{\mu} - \delta, \hat{\mu} + \delta)$, the solution $A(\tau)$ with $A''(0) = \mu$ is below zero for all $\tau \in (0, \htau_2 + \alpha]$ and satisfies $A'(\htau_2 - \beta) > 0$ and $A'(\htau_2 + \alpha) < 0$. This implies that $A(\tau_2) < 0$ and $A'(\tau_2) = 0$ at $\tau_2 \in (\htau_2 - \beta, \htau_2 + \alpha)$.
\end{proof}

\subsection{Final steps}
We are now in a position to complete the proof of Proposition~\ref{ODE Prop}.
\begin{proof}
Define the set $\Omega$ as follows:
$$
\Omega := \left\{ \bar{\mu} \in \R : A(\tau_2) = 0 \text{ and } A'(\tau_2) > 0 \text{ for any } \mu \ge \bar{\mu} \right\}.
$$
Lemma \ref{reach 0} guarantees that $\Omega$ is nonempty, and Lemma \ref{below 0} shows that zero is a lower bound of $\Omega$. Take $\mu_* := \inf \Omega > 0$, with the inequality guaranteed by Lemma \ref{below 0} and Lemma \ref{open: below 0}. Let $A_*(\tau)$ be the solution to the IVP \eqref{IVP} with $A_*''(0) = \mu_*$. Then by Lemma \ref{open: reach 0} and the definition of $\mu_*$, it is only possible that either $A_*(\tau_2) = 0$ and $A_*'(\tau_2) = 0$ or $A_*(\tau_2) < 0$ and $A_*'(\tau_2) = 0$. Furthermore, we rule out the second case by the definition of $\mu_*$ and Lemma \ref{open: below 0}. This proves (i)--(iv) of Proposition \ref{ODE Prop}.

Since $A_*(\tau) < 0$ and $A_*'(\tau) > 0$ for all $\tau \in (\tau_1, \tau_2)$, we have 
$$
\ds \frac{\tau_2 A_*'}{6A_*} < A_*''' < \ds \frac{\tau_1 A_*'}{6A_*}
$$
for all $\tau \in (\tau_1, \tau_2)$. Integrating the above inequality gives
$$
\ds \frac{\tau_2}{6} \ln \left| \ds \frac{A_*(\tau)}{A_*(\tau_1)} \right| + A_*''(\tau_1) < A_*''(\tau) < \ds \frac{\tau_1}{6} \ln \left| \ds \frac{A_*(\tau)}{A_*(\tau_1)} \right| + A_*''(\tau_1)
$$
for all $\tau \in (\tau_1, \tau_2)$. Thus $A_*''(\tau) \rightarrow -\infty$ as $\tau \rightarrow \tau_2^-$. Then we can take a sufficiently small $\alpha > 0$ such that $A_*''(\tau) < -1$ for all $\tau \in (\tau_2 - \alpha, \tau_2)$. By the mean value theorem we have
$$
A_*'(\tau_2) - A_*'(\tau) = A''(s) (\tau_2 -\tau) < - (\tau_2 -\tau)
$$
for any $\tau \in (\tau_2 - \alpha, \tau_2)$ and some $s \in (\tau, \tau_2)$. Since $A_*'(\tau_2) = 0$, this implies that $0 < (\tau_2 -\tau) < A_*'(\tau)$ and consequently $\ds \frac{1}{2} (\tau_2 -\tau)^2 < A_*(\tau_2) - A_*(\tau) = -A_*(\tau)$ for all $\tau \in (\tau_2 - \alpha, \tau_2)$. Thus $\ln|A_*(\tau)| = O(\ln(|\tau-\tau_2|))$ as $\tau \to \tau_2^-$. This proves (v) of Proposition \ref{ODE Prop}.

\end{proof}

\section{Numerical Study of $K(2,2)$}
\label{SC}

\subsection{Regularization and scaling}
In this section we numerically assess the ill-posedness of
\eqref{K22}.  But it is inappropriate to directly simulate 
an equation
that not only lacks a local well-posedness theory but for which we
suspect ill-posedness.  Thus we regularize the equation.  Simulating
this regularized problem, we find evidence of the ill-posedness as we
let the regularization parameter vanish.

We study the following regularization of \eqref{K22}
\begin{equation}
\label{e:knm_cons_reg}
(I + \delta \partial_x^4) \partial_t u = \partial_x \paren{u^2} + \partial_x^3 \paren{u^2}.
\end{equation}
Here $I$ is the identity and $\delta > 0$ is the small regularization parameter.
We choose this particular regularization since its implementation is natural when simulating
solutions of \eqref{K22} {via} a pseudospectral method.

  Inverting the
operator on the left-hand side to be able to write it as an evolution
equation, we see that $\partial_x/(I + \delta \partial_x^4)$
and $\partial_x^3/(I + \delta \partial_x^4)$ are bounded operators.
Indeed,
$$
\left\|{\frac{\partial_x}{I + \delta \partial_x^4}}\right\|_{H^{s} \to H^{s+3}}
\le C \delta^{-1} \quad {\textrm{and}} \quad
\left\|{\frac{\partial_x^3}{I + \delta \partial_x^4}}\right\|_{H^{s} \to H^{s+1}}
\le C\delta^{-1}.
$$
The boundedness of these operators make it trivial to prove:
\begin{theorem}[Local Well-Posedness of a Regularized Problem]
\label{thm:lwp_kmn_cons_reg}
Fix
$\delta > 0$.  Then \eqref{e:knm_cons_reg} is locally well-posed in
$H^s$ for any $s>\tfrac{1}{2}$
\begin{itemize}
\item For all $u_0\in H^s$, there exists a $T>0$ and a unique function $u(t) \in
C(0,T;H^s)$ solving the integral form of \eqref{e:knm_cons_reg},

\item The solution will depend continuously upon the data,
\item There exists a maximal time of existence, $T_{\exist}$ such that if it is 
finite, 
\[
\lim_{t\to T_{\exist}} \norm{u(t)}_{H^s} = \infty
\]
\end{itemize}
\end{theorem}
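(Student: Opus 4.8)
The plan is to recast \eqref{e:knm_cons_reg} as an ordinary differential equation in the Banach space $\h{s}$ and invoke the standard Picard--Lindel\"of theory. Inverting the operator on the left and setting
$$
\N(u) := \frac{\partial_x}{I+\delta\partial_x^4}\paren{u^2} + \frac{\partial_x^3}{I+\delta\partial_x^4}\paren{u^2},
$$
a function $u \in C(0,T;\h{s})$ solves the integral form of \eqref{e:knm_cons_reg} precisely when
\be
u(t) = u_0 + \int_0^t \N(u(\sigma))\, d\sigma.
\ee
The entire argument then reduces to showing that $\N : \h{s} \to \h{s}$ is locally Lipschitz, with a Lipschitz constant that is finite (although it degenerates as $\delta \to 0^+$).

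The key structural fact is that for $s > \tfrac12$ the space $\h{s} = H^s(\R)$ is a Banach algebra: there is a constant $C_s$ with $\norm{fg}_{\h{s}} \le C_s \norm{f}_{\h{s}} \norm{g}_{\h{s}}$. In particular $u \mapsto u^2$ maps $\h{s}$ into itself and, via the identity $u^2 - v^2 = (u-v)(u+v)$, obeys
\be
\norm{u^2 - v^2}_{\h{s}} \le C_s \paren{\norm{u}_{\h{s}} + \norm{v}_{\h{s}}} \norm{u-v}_{\h{s}},
\ee
so it is locally Lipschitz on $\h{s}$. Composing this with the two inverted operators, which by the estimates preceding the theorem have $\h{s}\to\h{s}$ operator norm bounded by $C\delta^{-1}$ (indeed they \emph{gain} regularity, mapping into $\h{s+3}$ and $\h{s+1}$ respectively), we conclude that on the ball $\set{\norm{u}_{\h{s}} \le R}$ the map $\N$ is Lipschitz with constant $L = L(\delta,R,s)$.

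With this bound in hand, existence and uniqueness follow from the contraction mapping principle applied to $u \mapsto u_0 + \int_0^{(\cdot)} \N(u)$ on a small closed ball in $C(0,T;\h{s})$: choosing $T$ small (depending on $\delta$, $s$, and $\norm{u_0}_{\h{s}}$) makes this map a contraction, yielding the unique fixed point $u(t)$. Continuous dependence is obtained by subtracting the integral equations for two solutions, applying the local Lipschitz bound, and invoking Gronwall's inequality to control $\norm{u(t)-v(t)}_{\h{s}}$ by $\norm{u_0 - v_0}_{\h{s}}$. Finally, the blow-up alternative is the standard continuation argument: the local existence time can be taken to depend only on $\norm{u_0}_{\h{s}}$, so the solution extends as long as its $\h{s}$ norm stays bounded; hence if $T_{\exist} < \infty$ then necessarily $\lim_{t\to T_{\exist}} \norm{u(t)}_{\h{s}} = \infty$.

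I do not expect a genuine analytic obstacle: because the inverted operators are bounded (in fact smoothing), the nonlinearity $\N$ is as tame as a polynomial composed with bounded linear maps, which is exactly why the authors call the result trivial. The only point requiring care is the algebra property, which is what forces the restriction $s > \tfrac12$; everything else is the textbook ODE-in-a-Banach-space machinery.
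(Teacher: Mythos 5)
Your proposal is correct and follows exactly the route the paper indicates for its (omitted) proof: invert $(I+\delta\partial_x^4)$, use the stated $\bigo(\delta^{-1})$ operator bounds together with the algebra property of $H^s$ for $s>\tfrac12$ to make the nonlinearity locally Lipschitz, and run the standard contraction-mapping/Gronwall/continuation argument. Nothing in your write-up deviates from or adds risk to what the authors describe as "an elementary application of the fixed point technique."
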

The proof of this, which we omit, is an elementary application of the
fixed point technique.  The operator is bounded in $L^2$-based Sobolev spaces.  For $s>\tfrac{1}{2}$, $H^s$ is an algebra which
makes the nonlinearity easy to treat.
An impediment to extending Theorem \ref{thm:lwp_kmn_cons_reg} to one which is
global in time, or even one which holds on time intervals of a length
uniform in $\delta$, 
is the lack of an obvious coercive conserved
quantity associated with \eqref{e:knm_cons_reg}. 
The only obvious conserved quantity for \eqref{e:knm_cons_reg}
is
$$
\int u(x,t) dx = \int u(x,0) dx.
$$
This is formally an invariant for solutions of $\eqref{K22}$ as well.


By simulating the regularized equation \eqref{e:knm_cons_reg} we
seek evidence of ill-posedness of \eqref{K22}.  In
particular, we shall find a sequence of vanishing initial conditions for
\eqref{e:knm_cons_reg}, such that as $\delta \to 0$, the corresponding
solutions at $t = T_\star>0$ have $H^2$ norm of (at least) unit size.
To that end, we will scale the initial data for \eqref{e:knm_cons_reg}
using the same scaling given by \eqref{DDEscaling} and link the vanishing of 
the smoothing parameter $\delta$ to the scaling parameter $\lambda$. Specifically we will
study \eqref{e:knm_cons_reg} with
\be \label{params}
\delta = \delta_\lambda:=.1 \lambda^4 {\quad \textrm{and} \quad} u(x,0) = f_\lambda(x) :=  \lambda f \left({ x \over \lambda^{1/3}}\right)
\ee
where
$$
f (x) = \exp(-4x^2)
$$
and $\lambda \to 0$.  Note that the calculations leading to \eqref{sobscaling}
show that $\| f_\lambda \|_{H^2} \le C \lambda^{7/6}$ and so this choice of initial
data vanishes in the limit.
We denote the solution of \eqref{e:knm_cons_reg} with the choices 
in \eqref{params} by $u_\lambda(x,t)$.  Note that we have taken our initial data to 
be everywhere positive, unlike the self-similar solutions found for \eqref{DDE}.  
We do this to demonstrate that the problematic effects of degeneracy manifest themselves
even for solutions which do not cross the $x$-axis.  

The choices for $\delta_\lambda$ and $f_\lambda(x)$ are made for the following reason.
Instead of scaling just the initial data for \eqref{e:knm_cons_reg} suppose that we 
rescale the whole equation by
\begin{equation}
\label{e:u_scaling}
u_\lambda(x,t) = \lambda v_\lambda \left(  {x \over \lambda^{1/3}}, t \right).
\end{equation}
Then \eqref{e:knm_cons_reg} becomes:
$$
(I + \delta_\lambda \lambda^{-4/3} \partial_y^4) \partial_t v_\lambda = \lambda^{2/3} 
 \partial_y v_\lambda^2 + \partial_y^3 v_\lambda^2
$$
where $y = x/\lambda^{1/3}$.  Plugging in from \eqref{params} we get
$$
v_\lambda(y,0) = f(y)
$$
and
\begin{equation}
\label{e:knm_cons_resacled}
(I + .1 \lambda^{8/3} \partial_y^4) \partial_t v_\lambda = \lambda^{2/3} 
 \partial_y v_\lambda^2 + \partial_y^3 v_\lambda^2.
\end{equation}

Of course \eqref{e:knm_cons_resacled} is not an exact scaling. Notice that
as $\lambda \to 0$ the term $\partial_y v_\lambda^2$ term will vanish.  The equation
will asymptotically be dominated by the third derivative term which is
consistent with our expectation that this term is the source of the
ill-posedness.  Moreover notice that the coefficient of regularization even in these scaled coordinates
vanishes.  That is to say, we have chosen the regularization parameter so that it  vanishes ``more rapidly"
than scaling effects.

\subsection{Computational results}


We integrate our problem pseudospectrally with a Crank-Nicholson time
stepping scheme.  Some additional details are presented in Section \ref{s:numerics}.
%
Simulating \eqref{e:knm_cons_reg} with \eqref{params} to
$T=.1$ on the domain $[-2\pi, 2\pi)$ with $8192$ resolved Fourier
modes, the data evolves as in Figures \ref{f:evolution_lam04} and \ref{f:evolution_lam005}.  At large
values of $\lambda$, there is some development of oscillations.  At
very small values of $\lambda$, it develops a highly oscillatory
structure. Note that these oscillations appear for $x>0$, where $f' < 0$.  That is to
say in the exactly the place where the term $6 u_x u_{xx}$ in \eqref{K22} acts, heuristically,
as a backwards heat operator.

\begin{figure}
\subfigure[]{\includegraphics[width=2.2in]{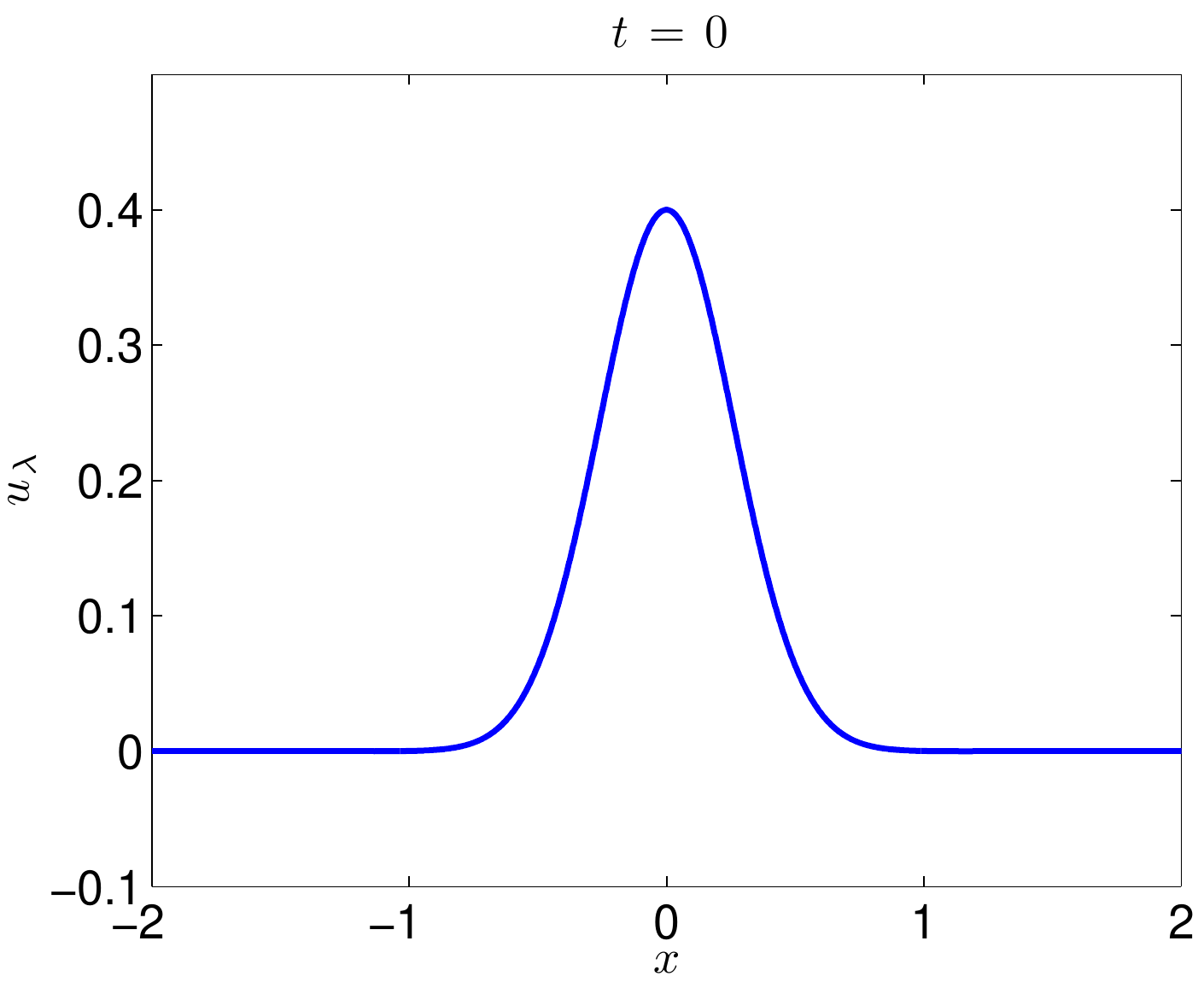}}
\subfigure[]{\includegraphics[width=2.2in]{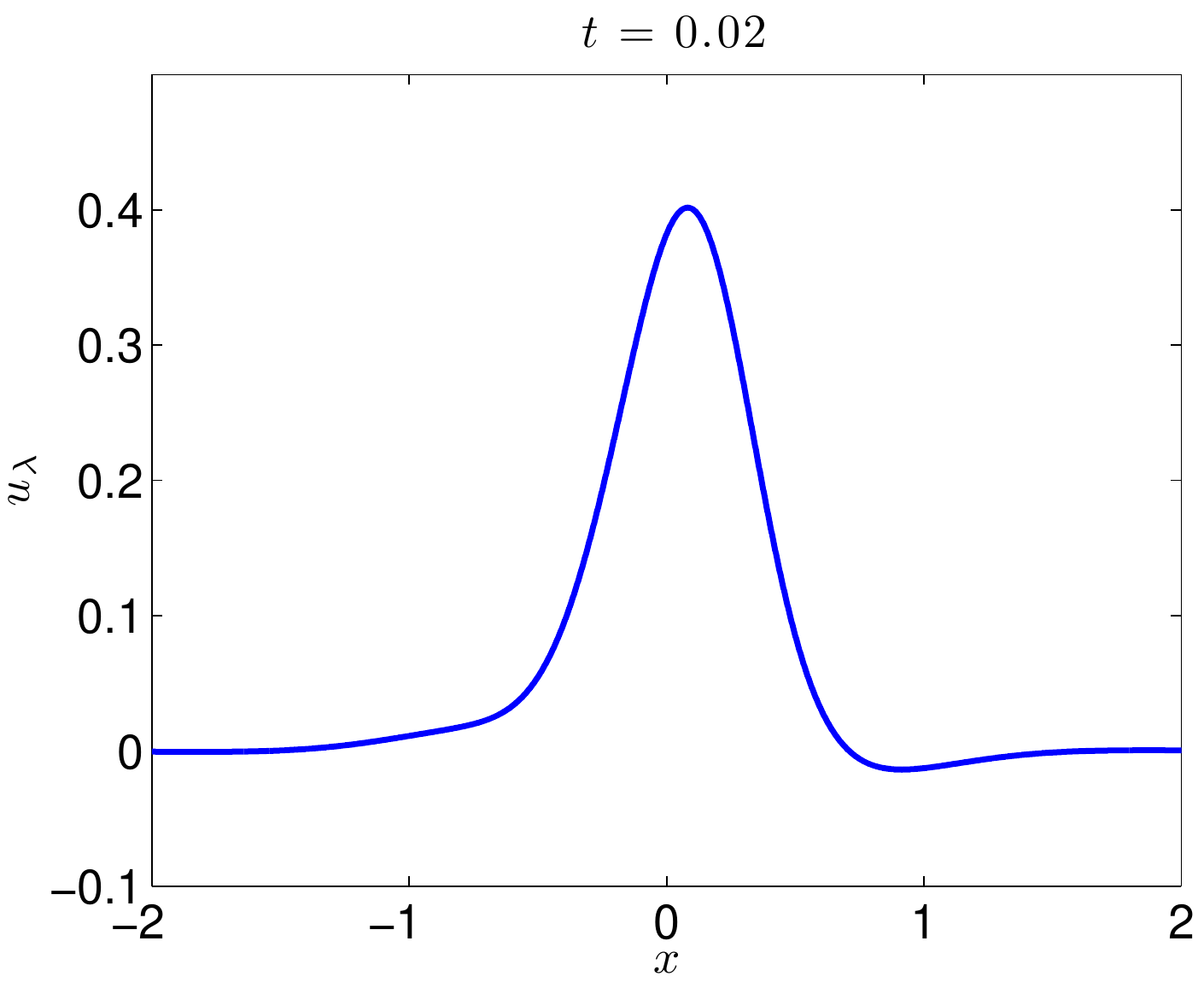}}
\subfigure[]{\includegraphics[width=2.2in]{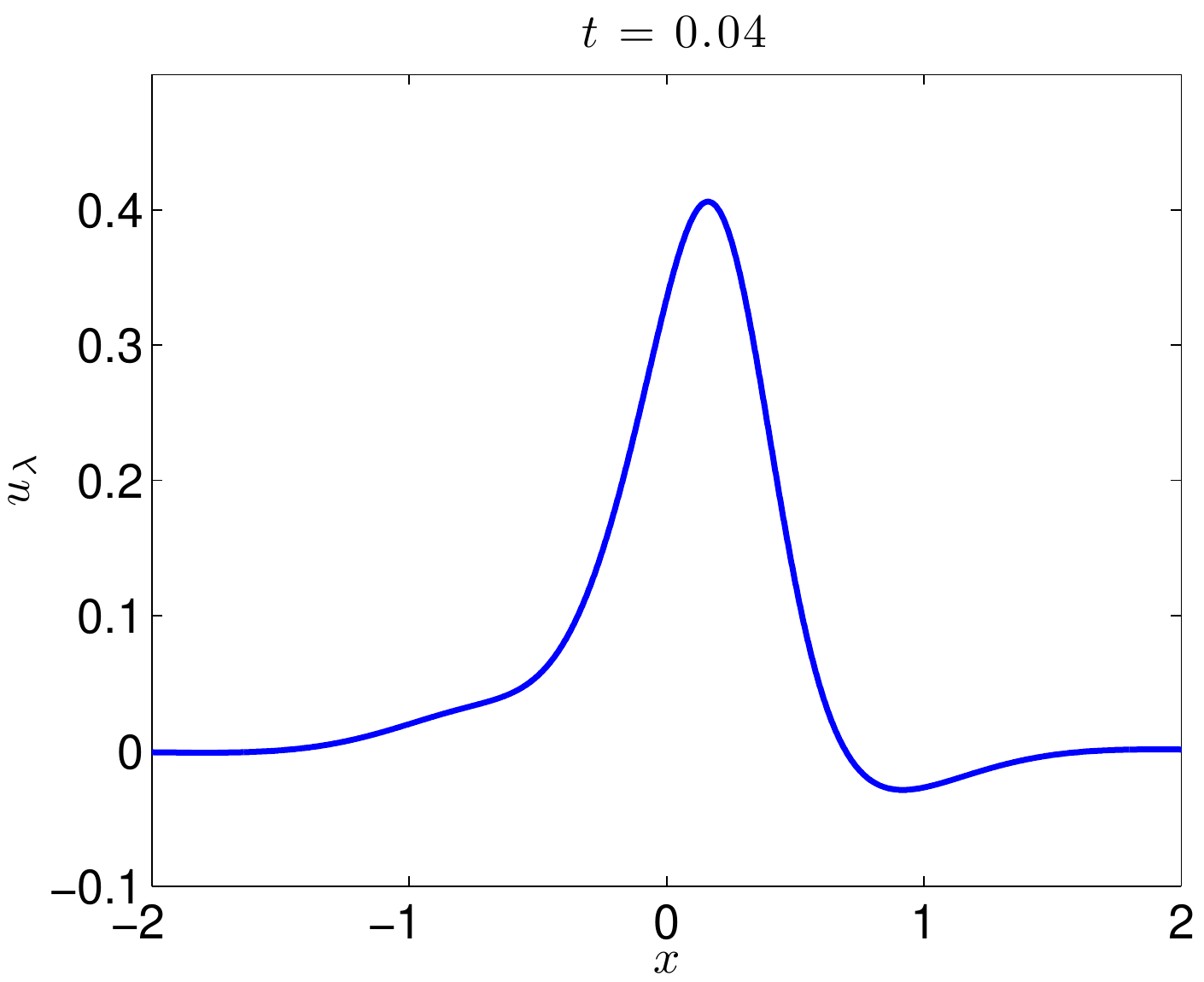}}
\subfigure[]{\includegraphics[width=2.2in]{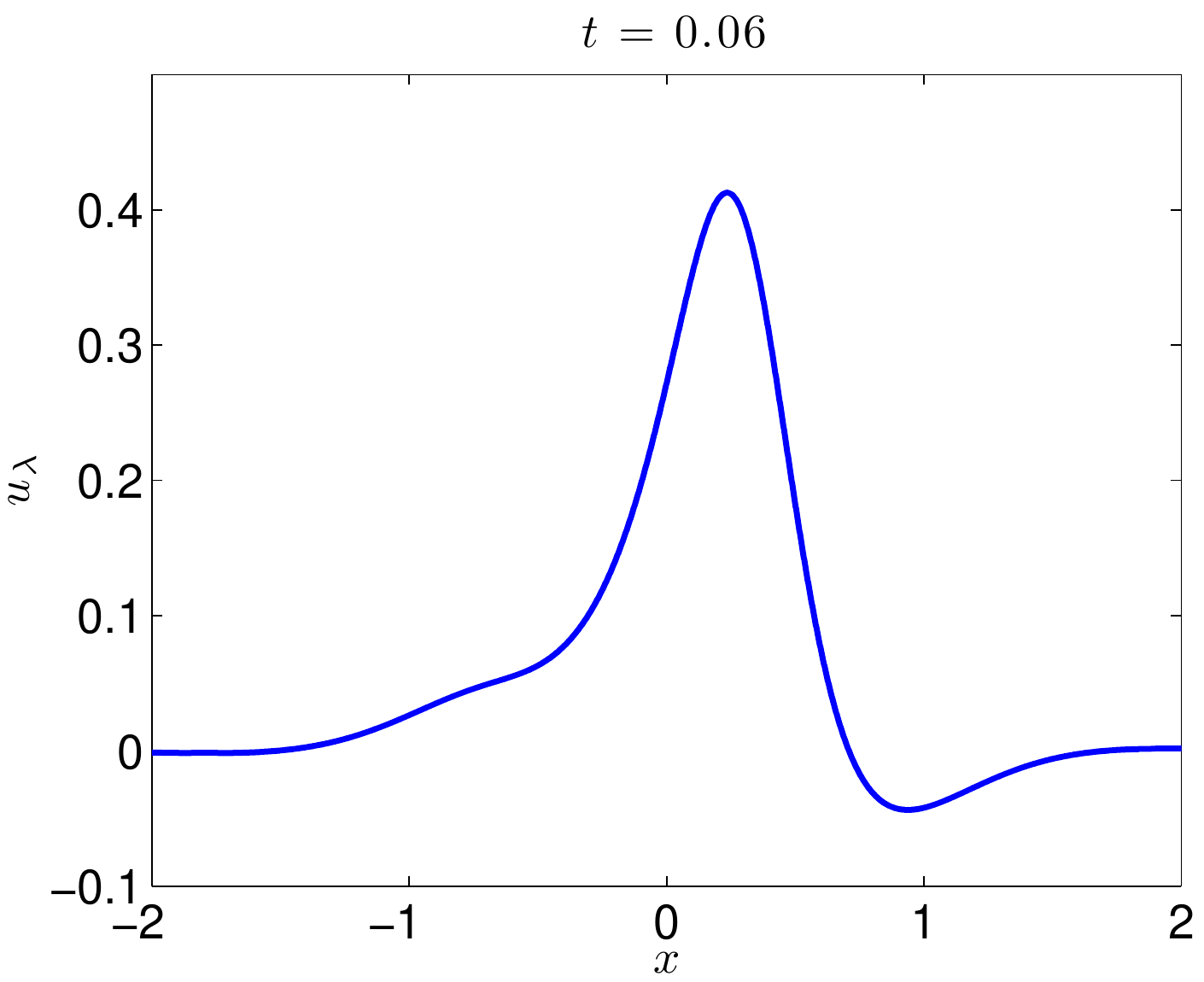}}
\subfigure[]{\includegraphics[width=2.2in]{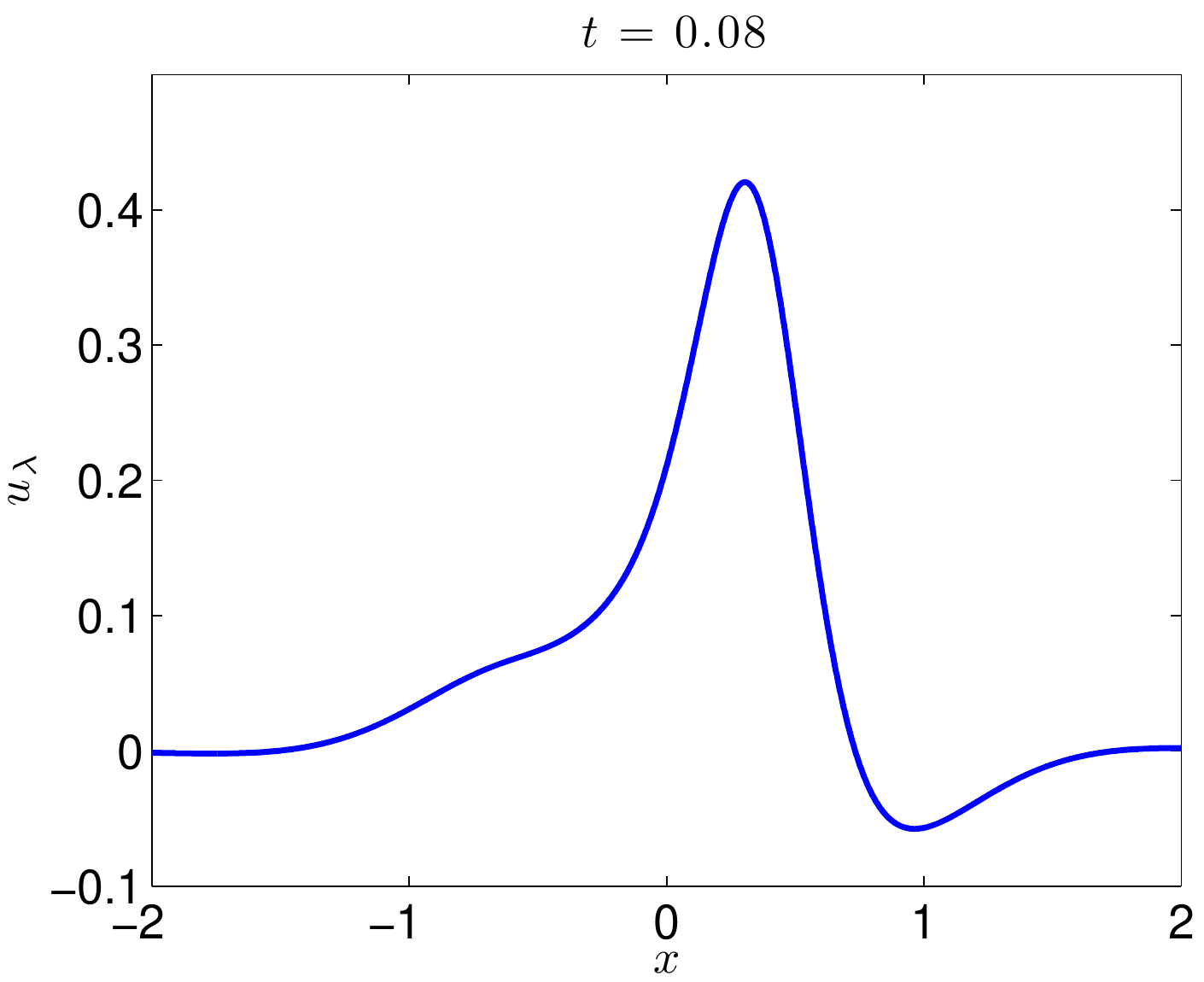}}
\subfigure[]{\includegraphics[width=2.2in]{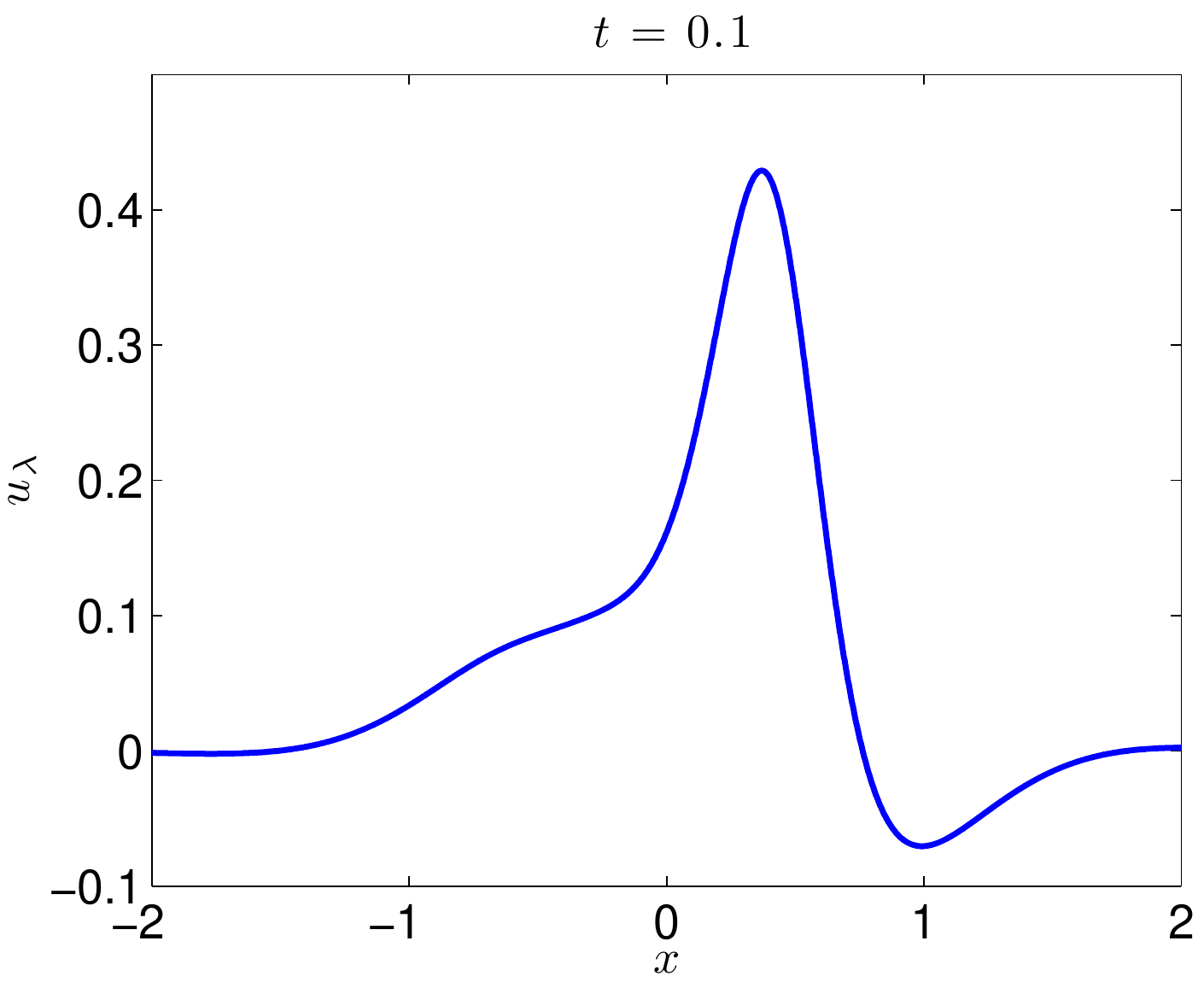}}
\caption{Evolution of the regularized $K(2,2)$
  equation with data given by \eqref{params} with $\lambda = .4$.}
\label{f:evolution_lam04}
\end{figure}

\begin{figure}
\subfigure[]{\includegraphics[width=2.2in]{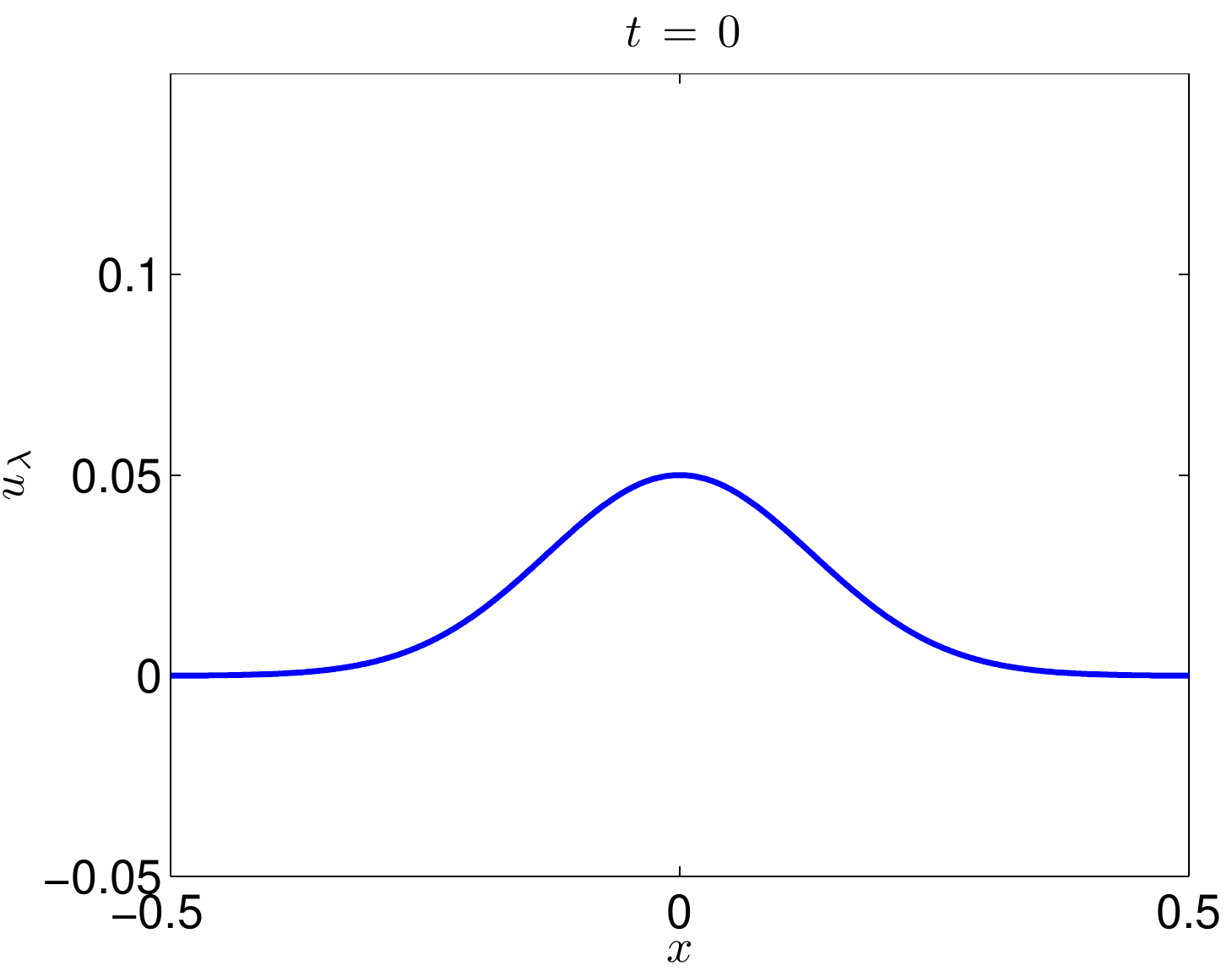}}
\subfigure[]{\includegraphics[width=2.2in]{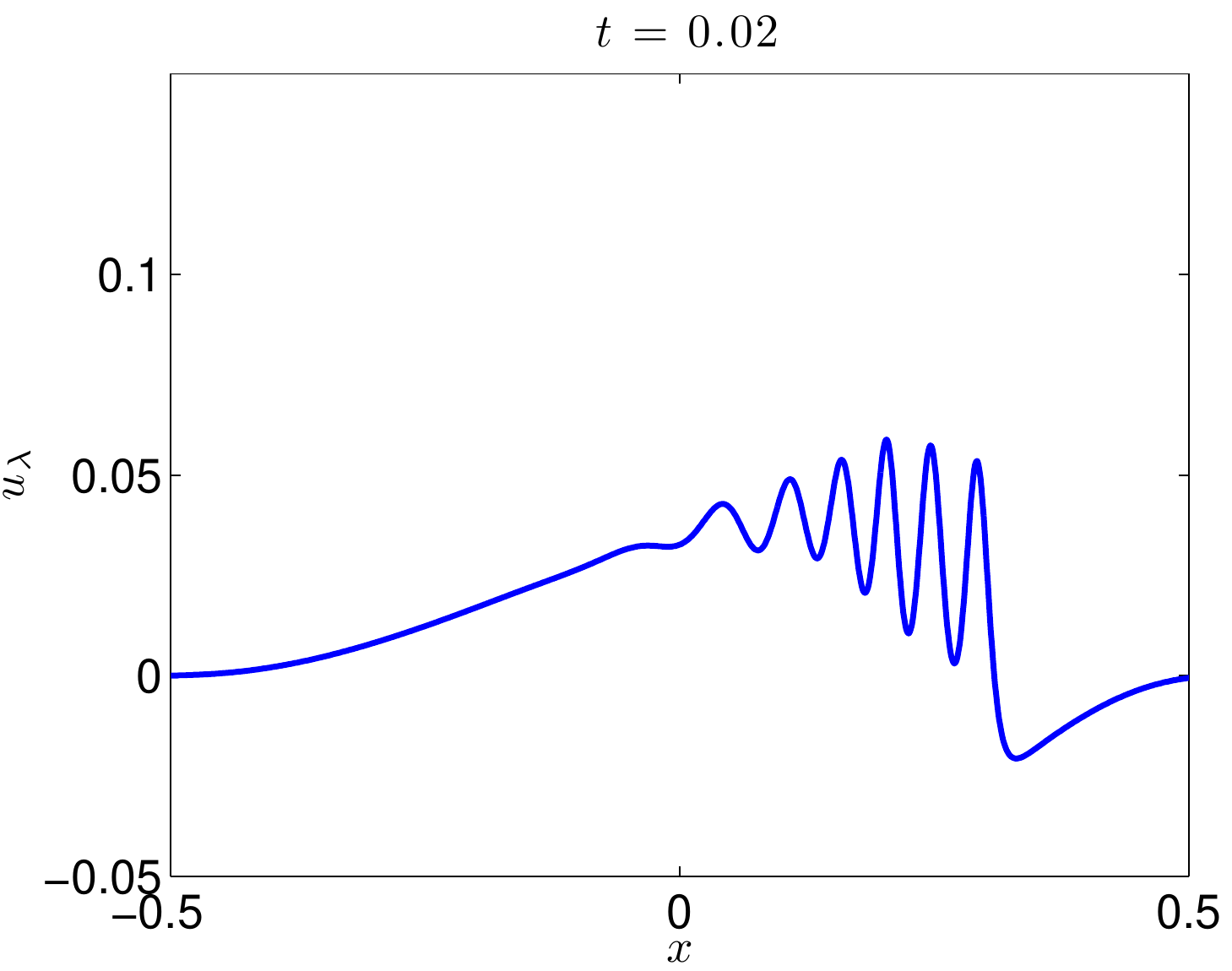}}
\subfigure[]{\includegraphics[width=2.2in]{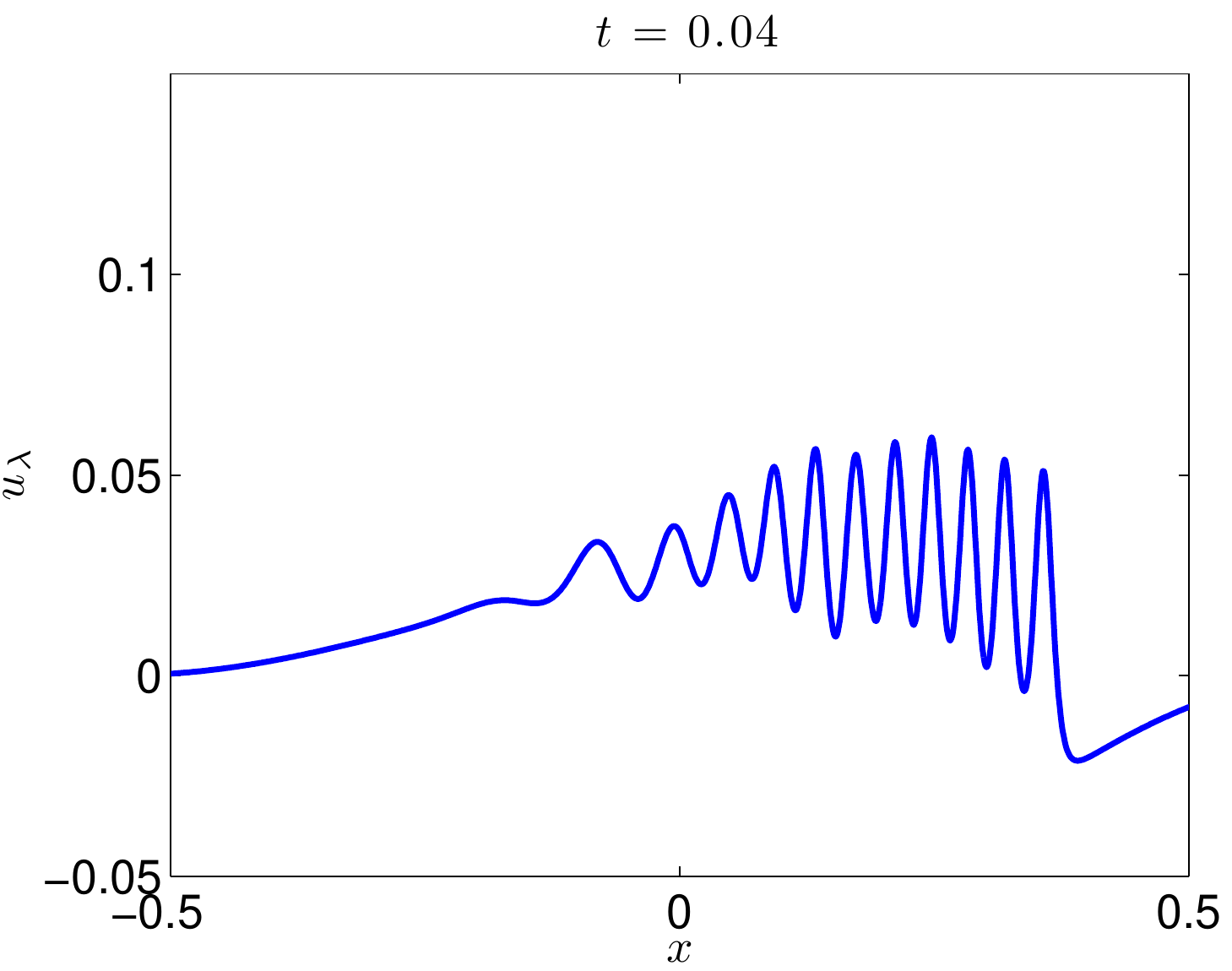}}
\subfigure[]{\includegraphics[width=2.2in]{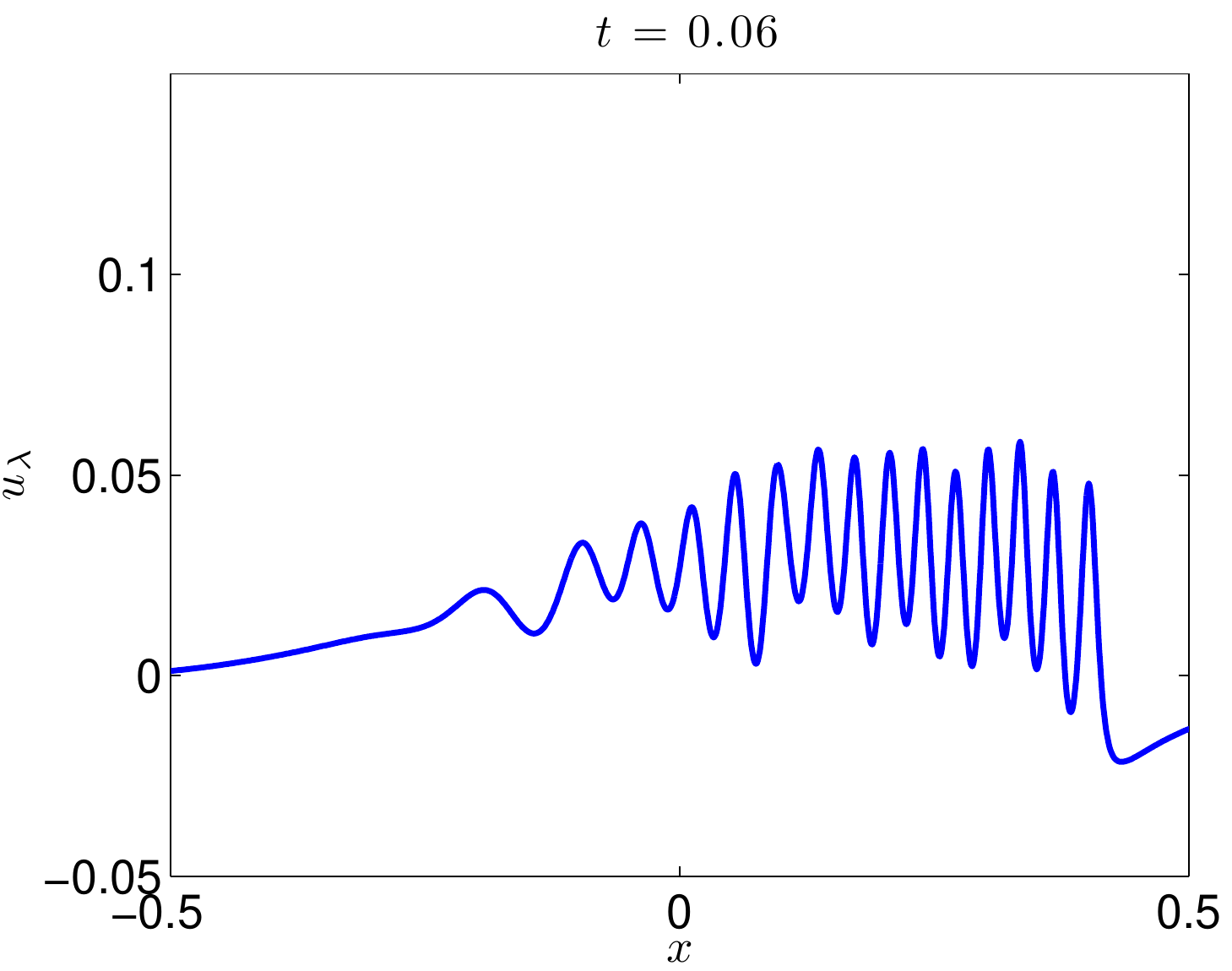}}
\subfigure[]{\includegraphics[width=2.2in]{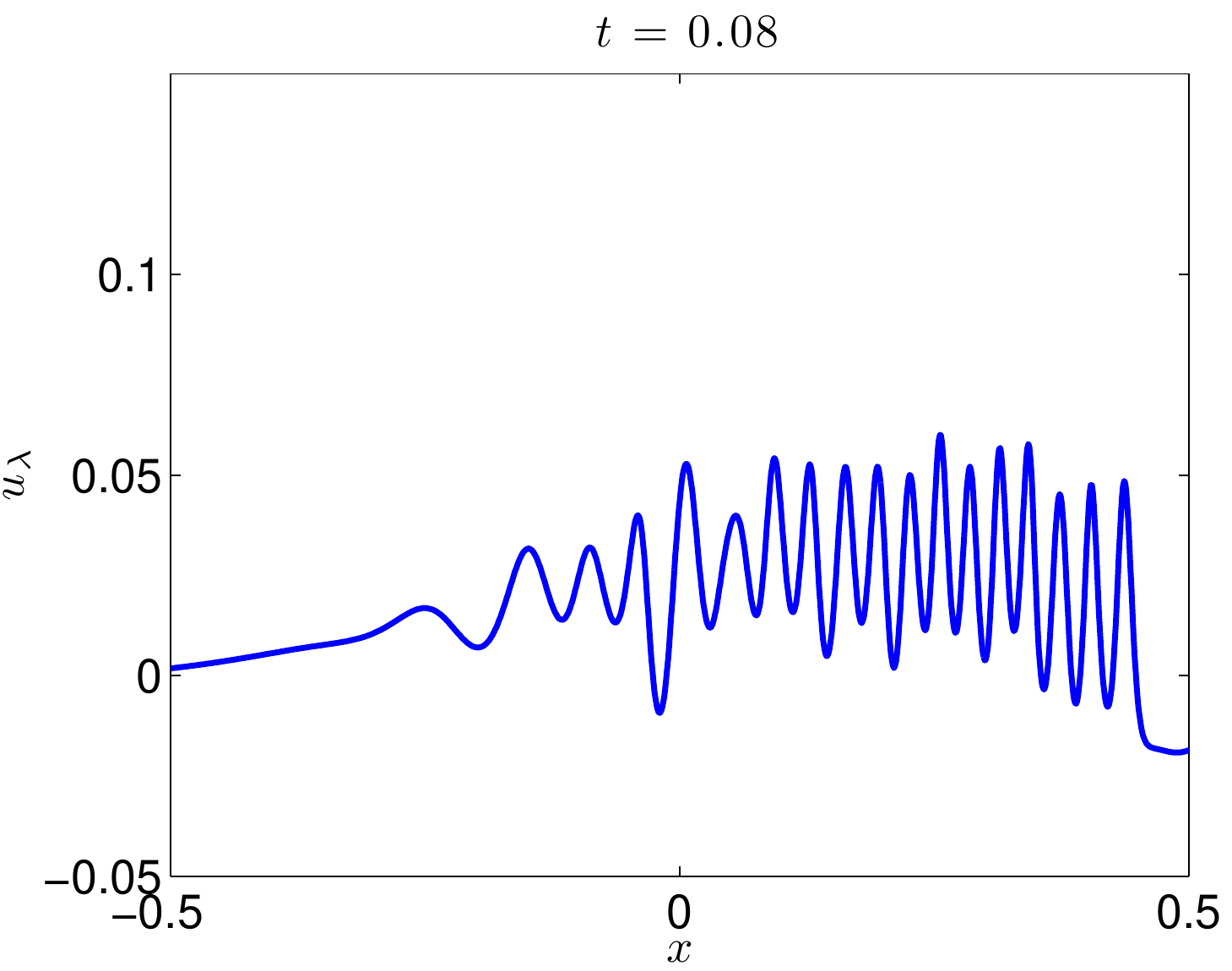}}
\subfigure[]{\includegraphics[width=2.2in]{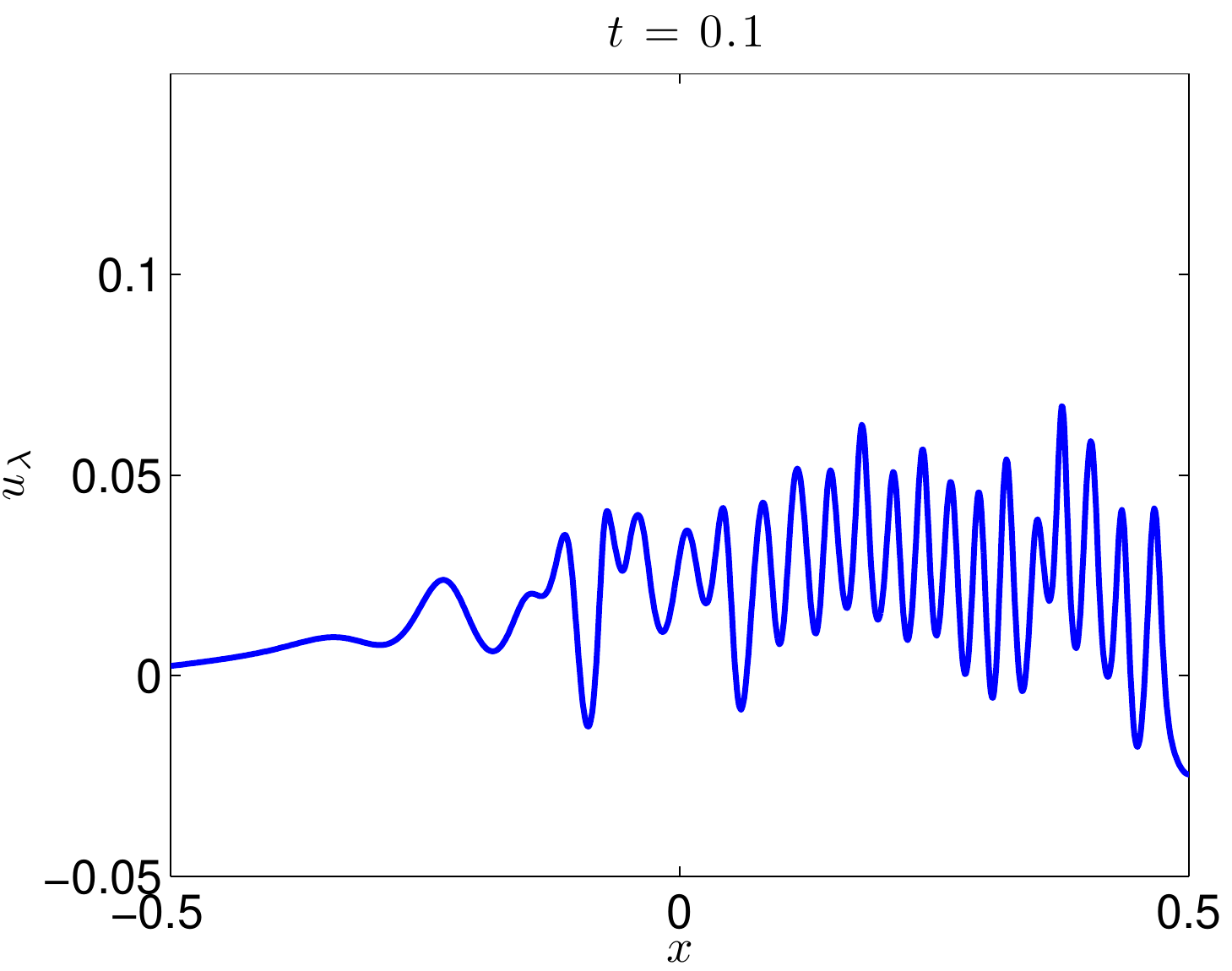}}
\caption{Evolution of the regularized $K(2,2)$
  equation with data given by \eqref{params} with $\lambda = .05$.}
\label{f:evolution_lam005}
\end{figure}

We present the evolution of the
$L^2$, $\dot{H}^1$, and $\dot{H}^2$ norms in Figure~\ref{f:norms}
of the solutions as functions of time.  Though there is
  little growth in time of the $L^2$ norm, we see orders of magnitude
  jumps in the $\dot{H}^1$ and $\dot{H}^2$ norms at $t = .1$ as we
  send $\lambda \to 0$. 
\begin{figure}
\subfigure[]{\includegraphics[width=2.2in]{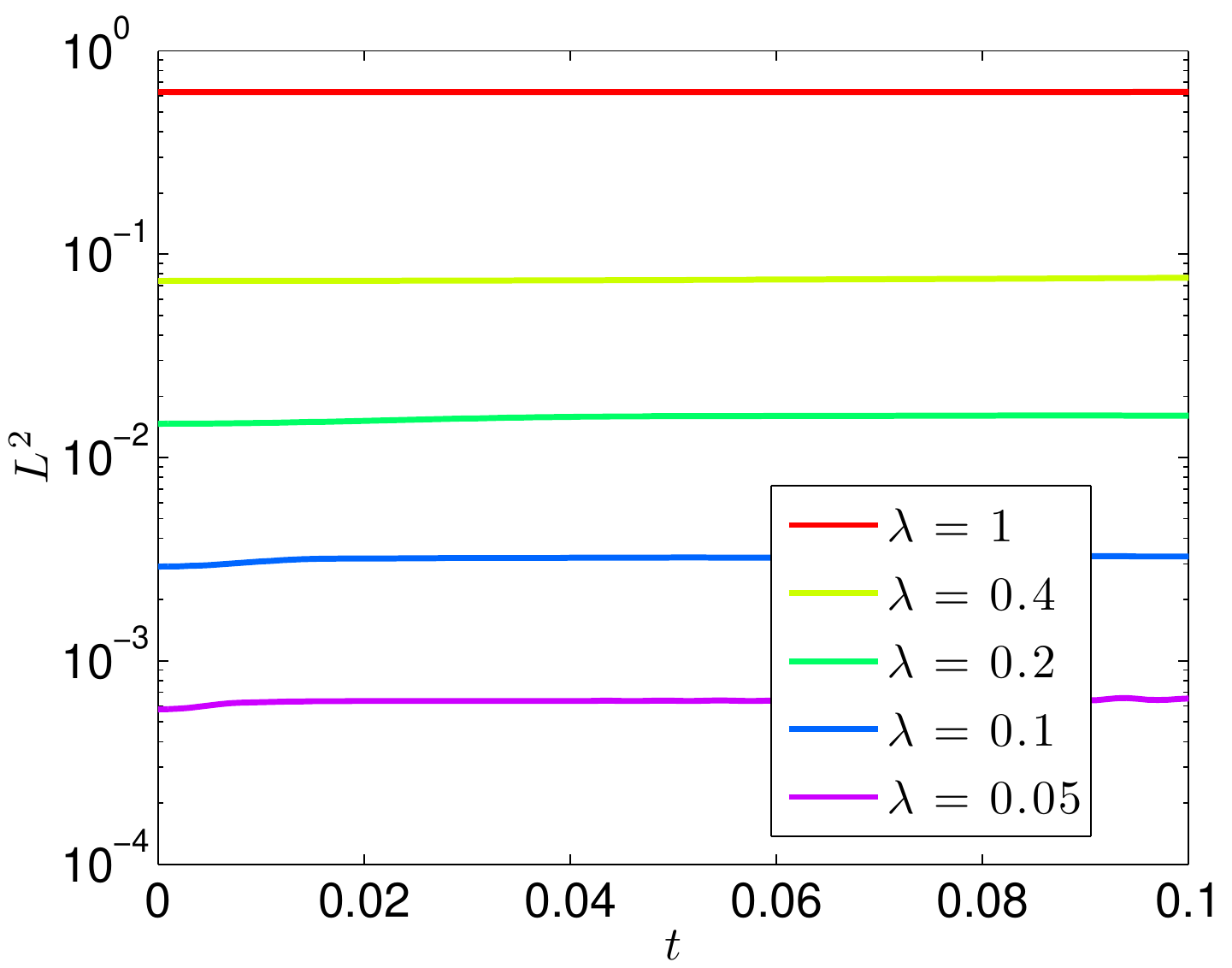}}
\subfigure[]{\includegraphics[width=2.2in]{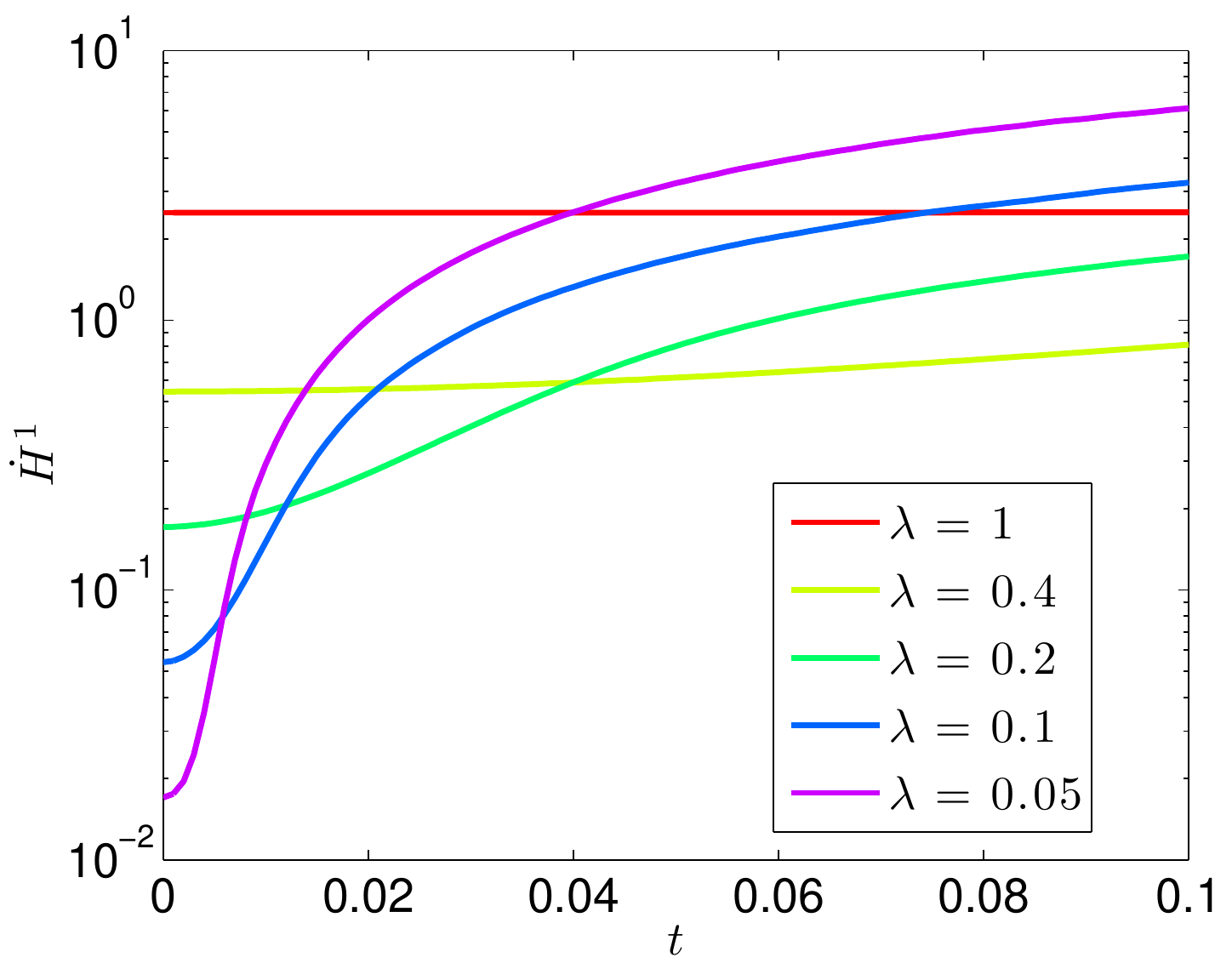}}
\subfigure[]{\includegraphics[width=2.2in]{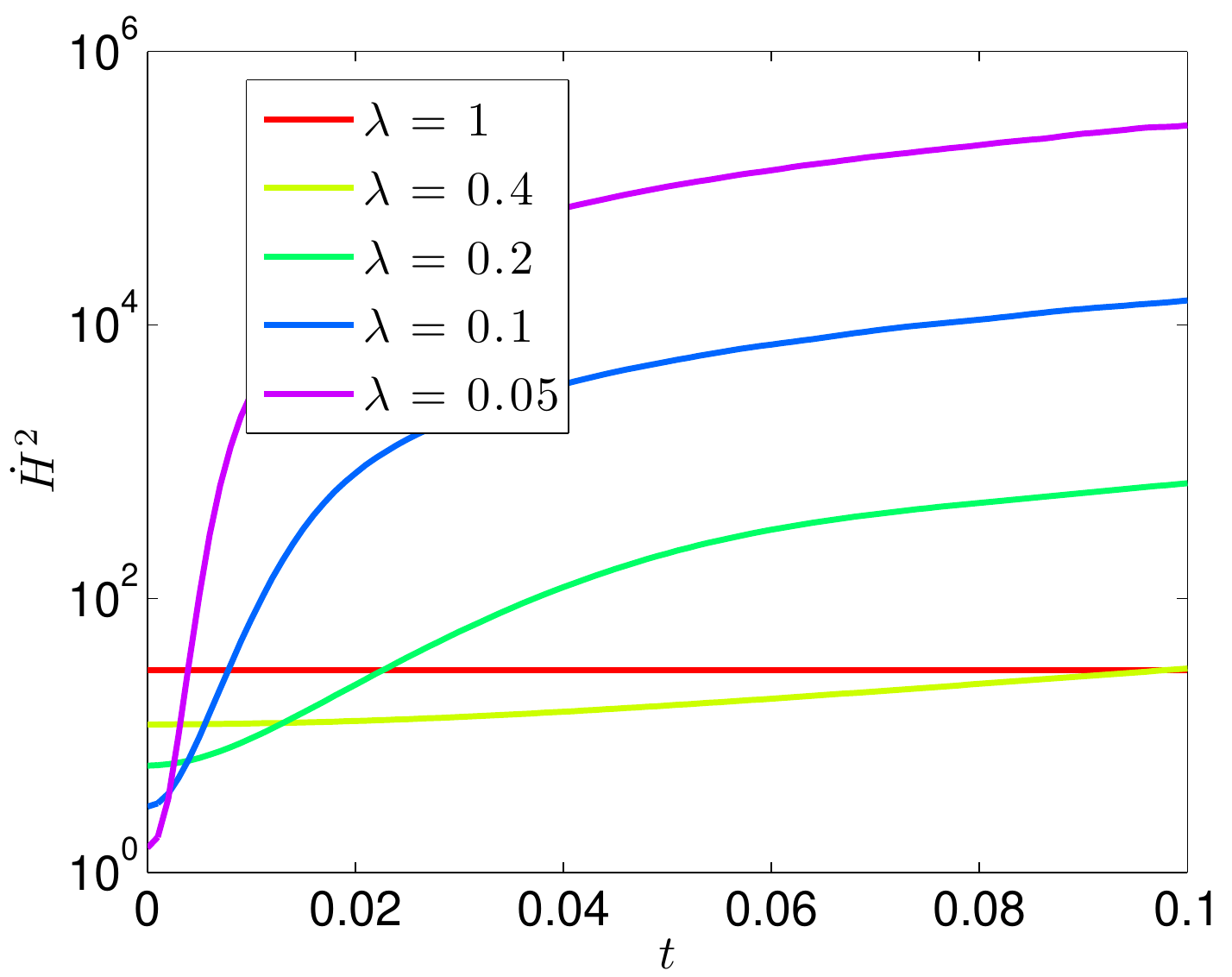}}
\caption{Evolution of several norms for the regularized $K(2,2)$
  equation with data given by \eqref{params}.}
\label{f:norms}
\end{figure}
  We contend that this is evidence of ill-posedness.  We have a
  sequence of initial conditions $f_\lambda$, which have vanishing
$H^2$ norm.  At a fixed $T>0$, the $H^2$ norms are growing as we
  let $\lambda \to 0$.  Thus, there is a loss of continuous dependence
  of the solution map upon the initial data about the $u(x,t) = 0$
  solution.  

  This growth in the $H^2$ norm corresponds to a spreading in Fourier
  space.  Indeed,  Figure~\ref{f:fourier_evolution} shows precisely
  this.  At fixed time, the Fourier support grows as $\delta \to 0$.
  These figures also indicate that except for $\lambda = .04$, all
  simulations are extremely well resolved.  Even the $\lambda = .04$
  simulation is well resolved through $t = .06$, and only slightly
  under resolved at the final time, $t=.1$.

\begin{figure}
\subfigure[]{\includegraphics[width=2.2in]{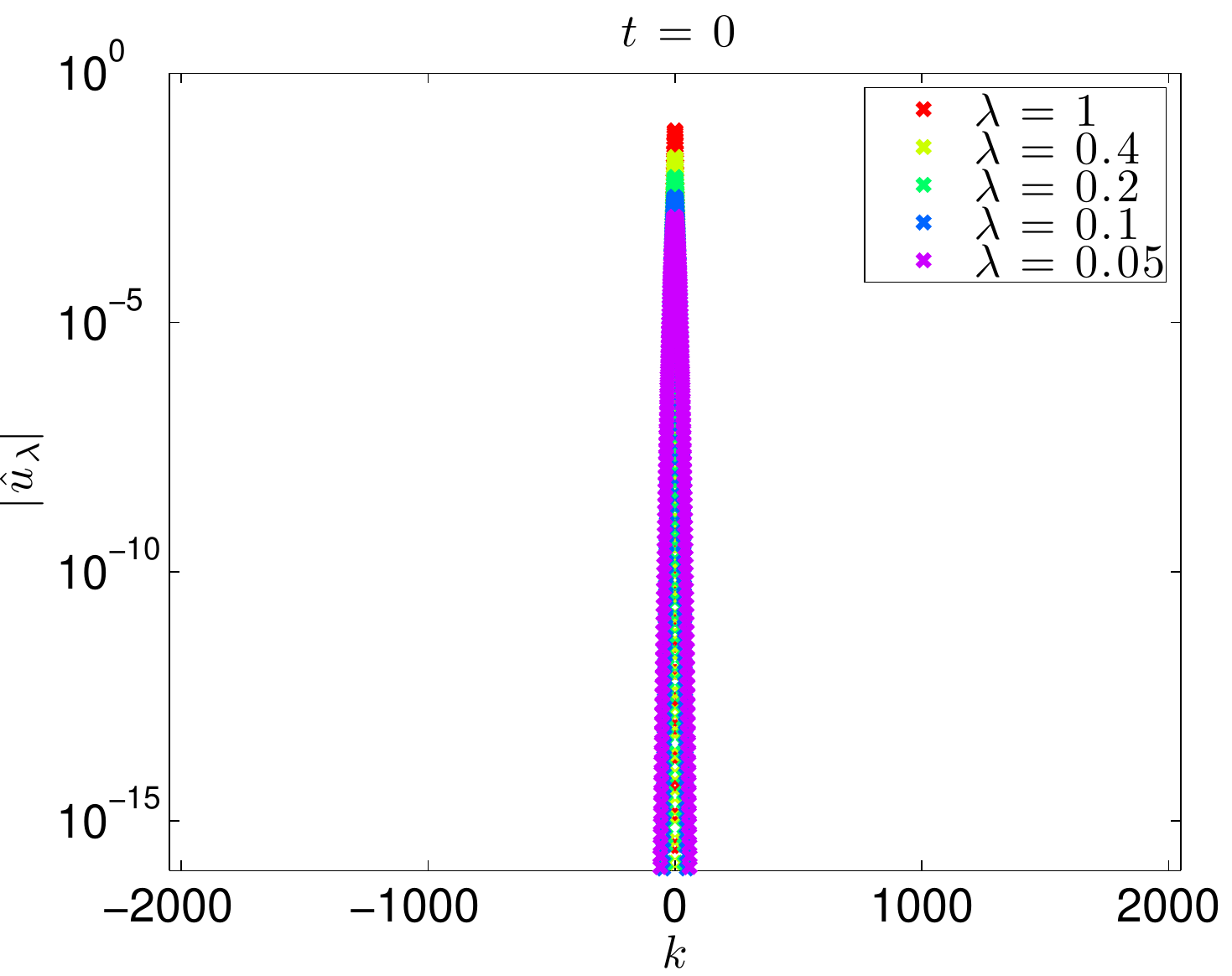}}
\subfigure[]{\includegraphics[width=2.2in]{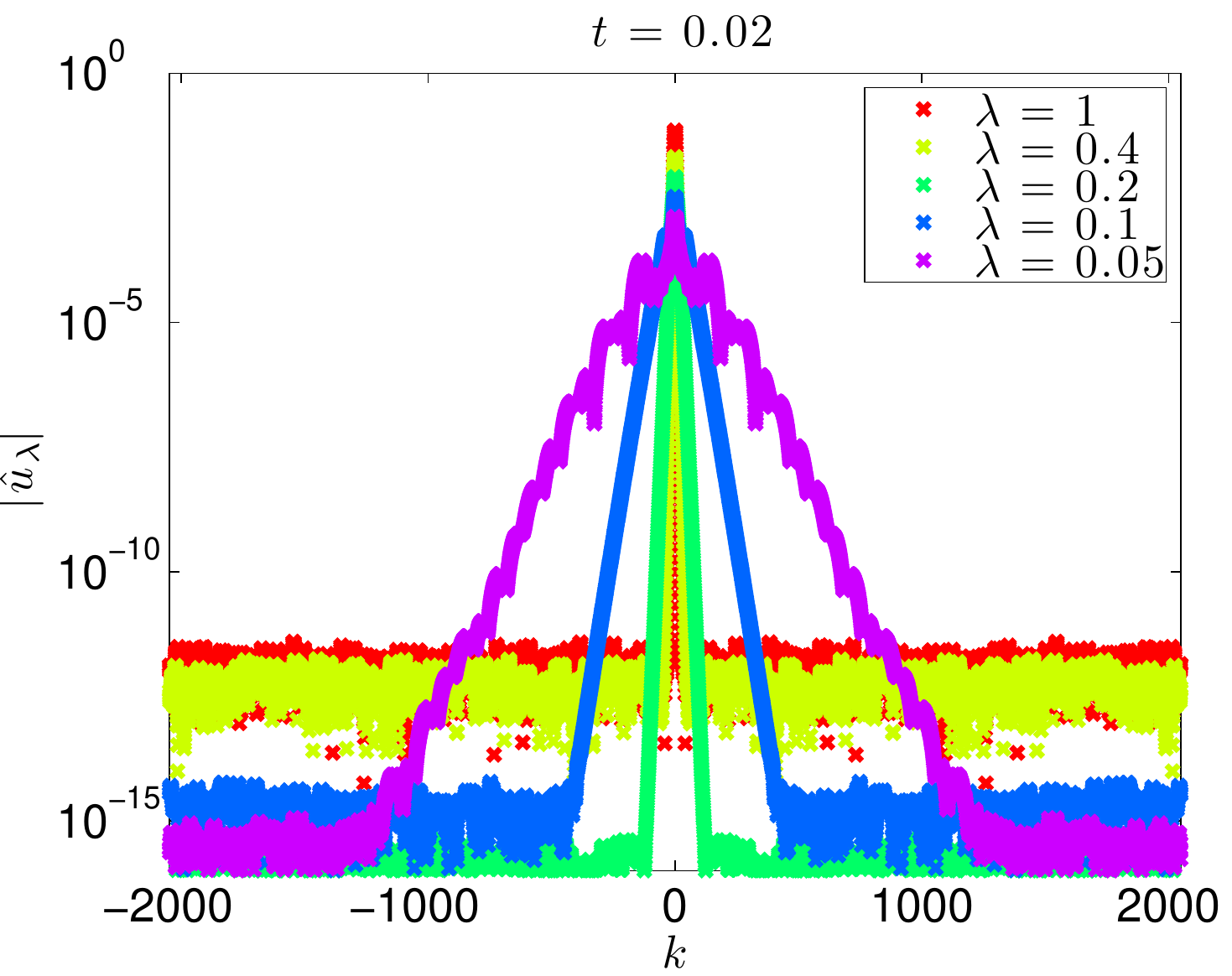}}
\subfigure[]{\includegraphics[width=2.2in]{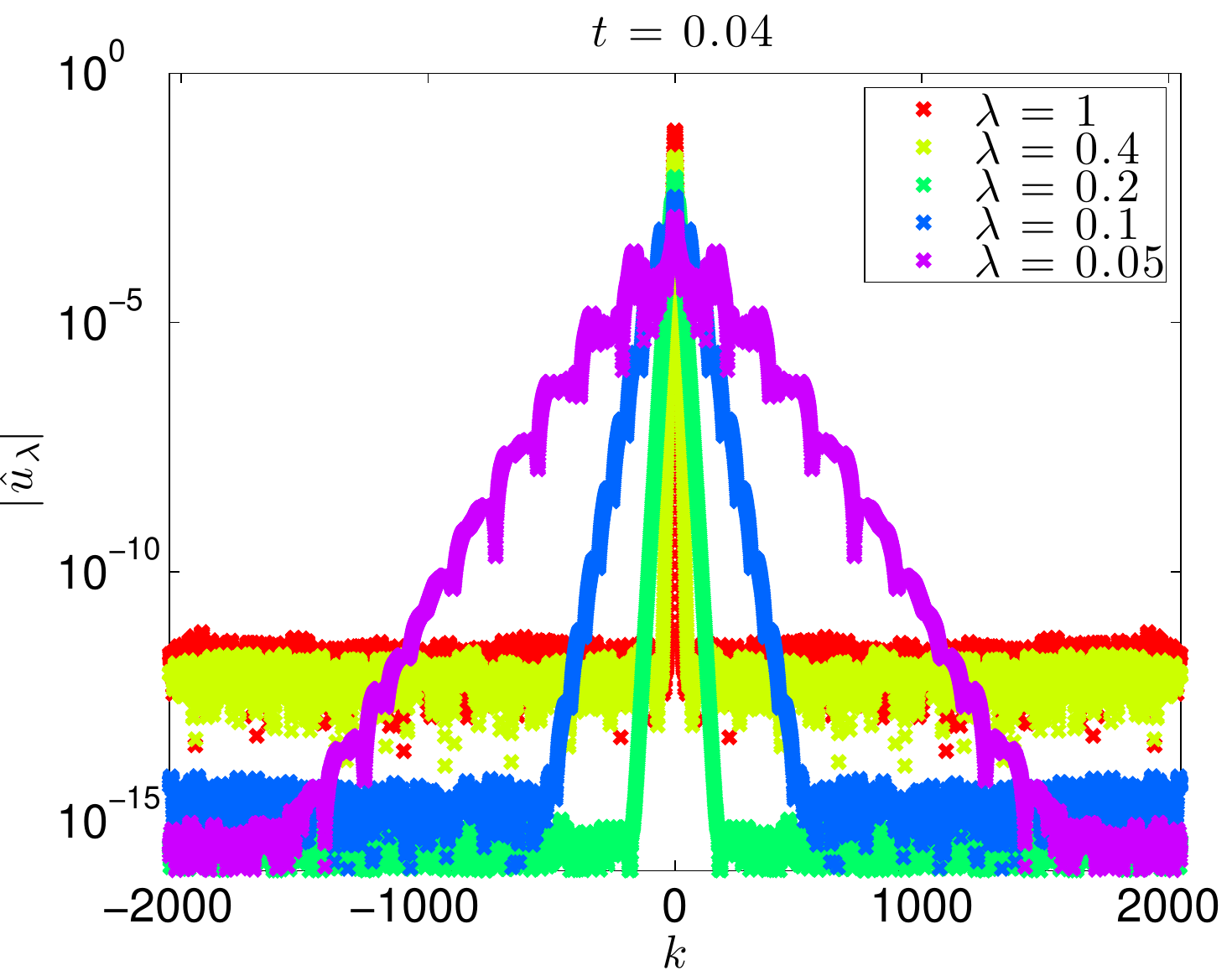}}
\subfigure[]{\includegraphics[width=2.2in]{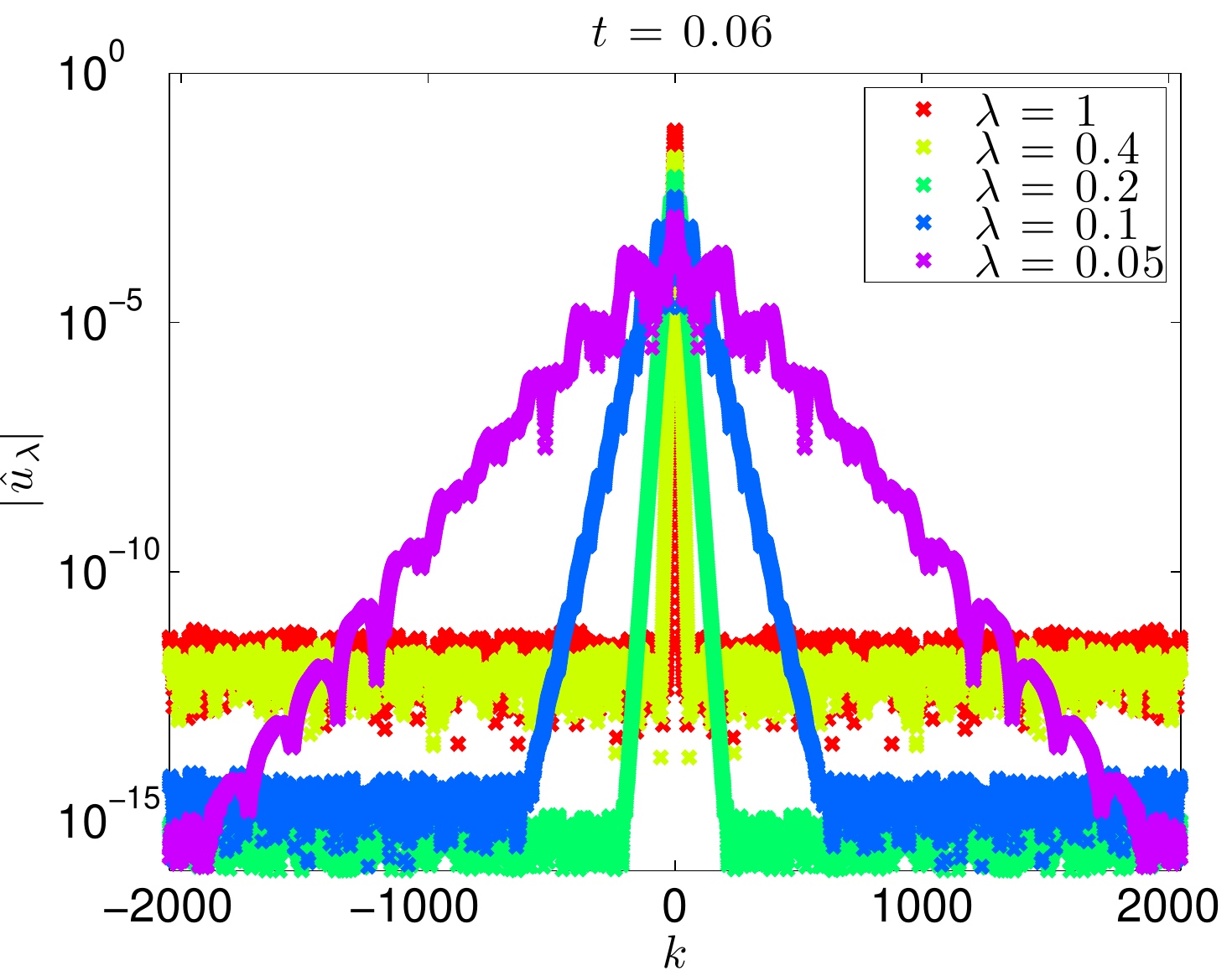}}
\subfigure[]{\includegraphics[width=2.2in]{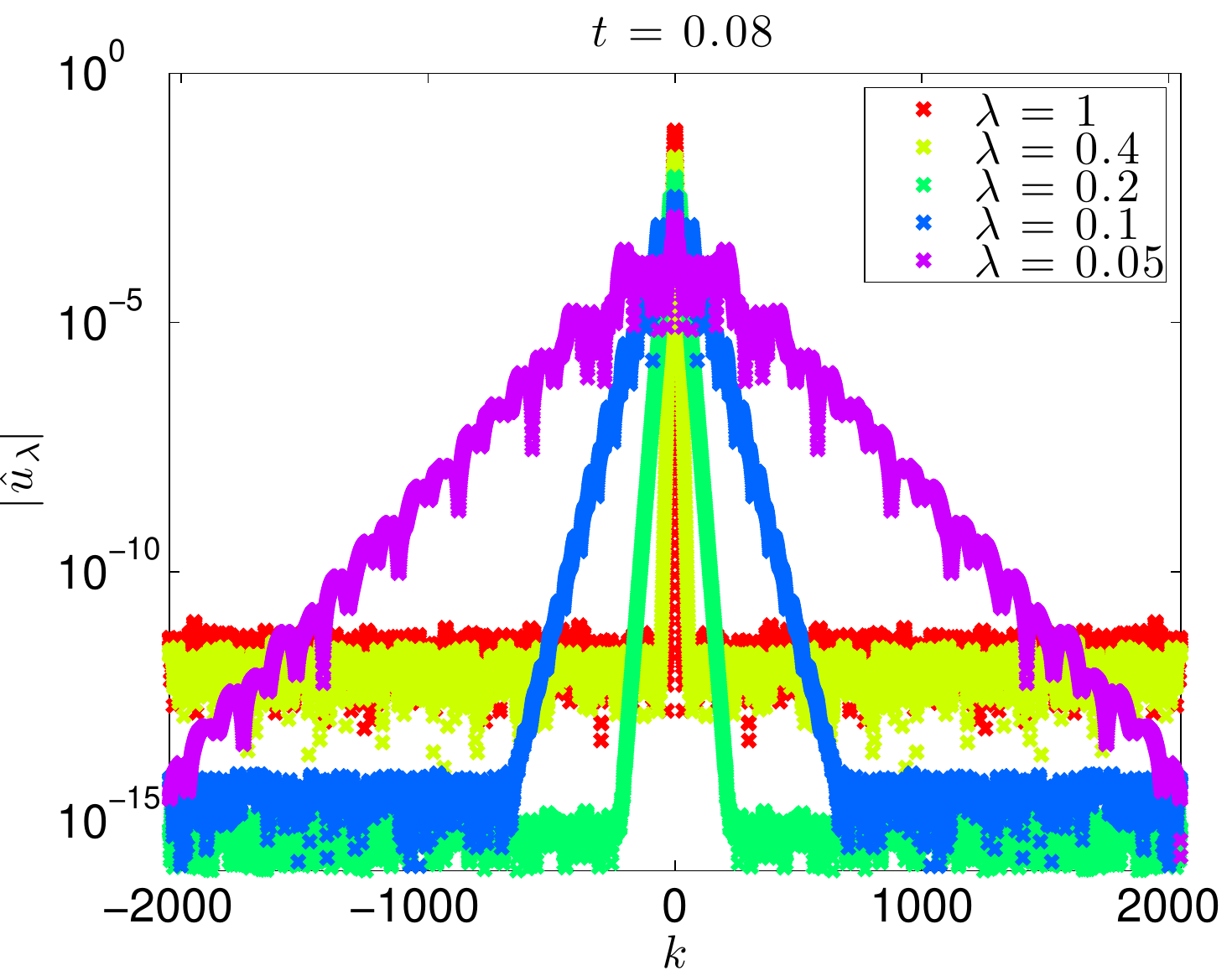}}
\subfigure[]{\includegraphics[width=2.2in]{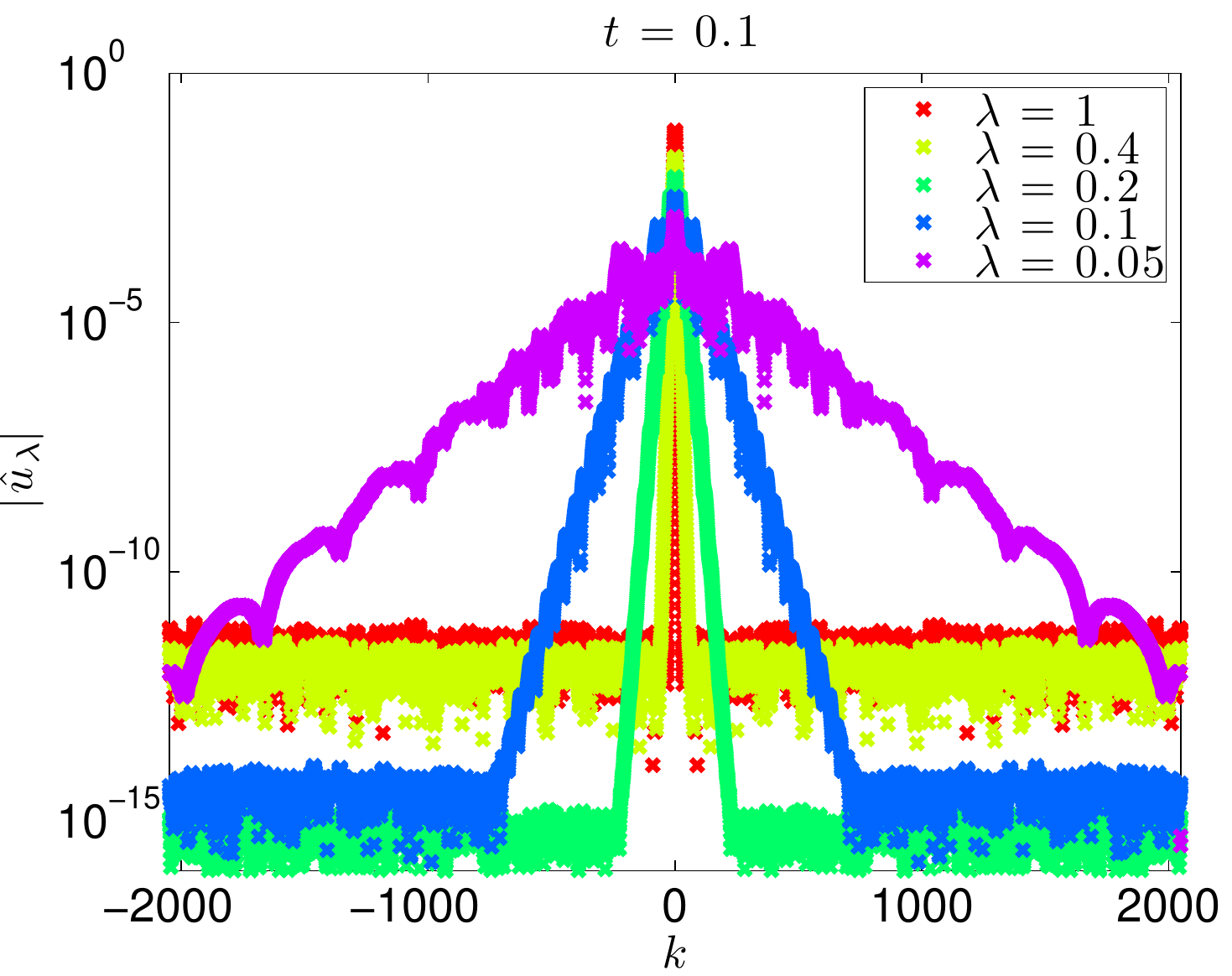}}
\caption{Evolution of the regularized $K(2,2)$
  equation with data given by \eqref{params} with $\lambda = .05$.}
\label{f:fourier_evolution}
\end{figure}

\subsection{Details of numerical methods}
\label{s:numerics}

As noted, the simulations presented in Section \ref{SC} were generated
using a fully de-aliased pseudospectral discretization with
Crank-Nicholson time stepping.  This was performed in {\sc Matlab}
and {\tt fsolve} was used to solve the nonlinear system at each time
step with a tolerance of $10^{-8}$.  8192 modes were resolved in these
simulations, with a time step of $\Delta t = .001$.  

The invariant $\ds \int u(x,t) dx$ was conserved to at least eight
significant figures in all the computed cases, as shown in Table \ref{t:mass_conservation}.

\begin{table}
\centering
\caption{Numerical approximation of $\int u(x,t) dx$ at the outputted
  times for different values of $\lambda$}
\label{t:mass_conservation}
  \begin{tabular}{ c c | c  c  c  c  c  c }
\hline
\hline
   & & \multicolumn{5}{|c}{$\lambda$} \\
   & & 1 & 0.4 & 0.2 & 0.1 & 0.05 \\	  
    \hline 
    \multirow{6}{*}{$t$} & 0.00 & 0.886226925453 & 0.261191032665 & 0.103653730004 & 0.0411350100123 & 0.0163244395416 \\
    &0.02  & 0.886226925477 & 0.261191032667 & 0.103653730004 & 0.0411350100123& 0.0163244395416  \\ 
    & 0.04  & 0.886226925489 & 0.261191032674 & 0.103653730004 & 0.0411350100123 & 0.0163244395416 \\
&0.06  & 0.886226925478 & 0.261191032676 & 0.103653730004& 0.0411350100123 & 0.0163244395416 \\
&0.08  & 0.886226925497 & 0.261191032673 & 0.103653730004 & 0.0411350100123 & 0.0163244395416 \\	
&0.10  & 0.886226925509 & 0.261191032673 & 0.103653730004 &
0.0411350100123 & 0.0163244395416\\
\hline
\end{tabular}
\end{table}

\subsection{Convergence of Norms}

Since we are concerned with the value of $\dot H^2$ at the end of our
simulation, it is also important to confirm that this is converging to
a fixed value as we refine the spatial and temporal resolution of our
simulations.   For the initial condition with $\lambda = 0.2$,  Table
\ref{t:h2_convergence_lam2} gives the values of the
$\dot H^2$ norm for various choices of the number of resolved Fourier modes,
$N$, and the time step, $\Delta t$.  With only 512 resolved modes, we
appear to be fully resolved in space, and there is little gain in
accuracy when $N$ is increased.  Of course, for other initial
conditions, more than 512 modes may be needed to resolve the
simulation.  

\begin{table}
\centering
\caption{Values of $\dot H^2$ at $t=.1$ corresponding to the $\lambda
  = 0.2$ simulation with different numbers of resolved Fourier modes, $N$,
  and time steps, $\Delta t$.}
\label{t:h2_convergence_lam2}
\begin{tabular}{c r | l l l }
\hline
\hline
& & \multicolumn{3}{|c}{$\Delta t$} \\
 &     &  0.004 & 0.002 & 0.001 \\ 
\hline
    \multirow{5}{*}{$N$}&   512 &  675.27623578 & 692.22147559 & 696.5301160 \\ 
& 1024 &  675.27623585 & 692.22147576 & 696.5301162 \\ 
& 2048 &  675.27623587 & 692.22147576 & 696.5301162 \\ 
& 4096 &  675.27623586 & 692.22147576 & 696.5301162 \\ 
& 8192 &  675.27623586 & 692.22147576 & 696.5301162 \\ 
\hline
\end{tabular}
\end{table}

For the three values of $\Delta t$, we appear to have achieved at
least one significant digit of accuracy and as we reduce the time
step, the variations of $\dot H^2$ become smaller.   Since we are using a
Crank-Nicholson scheme, we expect ${O}(\Delta t^2)$ convergence.
If we perform Richardson extrapolation, the estimated value of $\dot H^2$ based on the  $\Delta t = .004$ and
$\Delta t =.002$ simulations is 697.87.  Comparing the $\Delta t =
.002$ and $\Delta t = .001$ simulations, the estimated value is
697.97.  Separate computations, performed with just 512 grid points
for expediency, show that with $\Delta t = 0.0005$ the norm takes the
value 697.96, and for $\Delta t = 0.00025$, the value is 697.99.

An examination of the $\lambda = 0.1$ case is similar.  For the same
three values of $\Delta t$, the $\dot H^2$ norms, given in Table
\ref{t:h2_convergence_lam1}, are in agreement on the order of
magnitude.  Richardson extrapolation using the data at $\Delta t=.004$ and
$\Delta t = .002$ predicts a value of 15148, while the predicted value
based on the $\Delta t = .002$ and $\Delta t = .001$ value is 15278.
Independent simulations with 2048 grid points yield a value of 15253
when $\Delta t = 0.0005$ and a value of 15302 when $\Delta t =
0.00025$.  Thus, we believe that our choice of discretization
parameters, $N=8192$ and $\Delta t = 0.001$, for the results presented
in Section 4.2 yield meaningful
measurements of $\dot H^2$ that are accurate to at least one
signfigant figure.  This is sufficient for our study of ill-posedness.  

\begin{table}
\centering
\caption{Values of $\dot H^2$ at $t=.1$ corresponding to  the $\lambda = 0.1$
  simulation with different numbers of resolved Fourier modes, $N$,
  and time steps, $\Delta t$.}
\label{t:h2_convergence_lam1}
\begin{tabular}{c r | l l l }
\hline
\hline
& & \multicolumn{3}{|c}{$\Delta t$} \\
 &     &  0.004 & 0.002 & 0.001 \\ 
\hline
    \multirow{3}{*}{$N$} & 2048 &  12452.791732 & 14474.328299 & 15077.20691 \\ 
& 4096 &  12452.791732 & 14474.328298 & 15077.20691 \\ 
& 8192 &  12452.791732 & 14474.328299 & 15077.20691 \\
\hline
\end{tabular}
\end{table}

\newpage
\phantom{hi!}
{
\nocite{*}
\bibliographystyle{abbrv}
\bibliography{self-similar.bib}

\def\cprime{$'$} \def\cprime{$'$}
\begin{thebibliography}{10}

\bibitem{Ambrose-Masmoudi}
D.~M. Ambrose and N.~Masmoudi.
\newblock Well-posedness of 3{D} vortex sheets with surface tension.
\newblock {\em Commun. Math. Sci.}, 5(2):391--430, 2007.

\bibitem{Ambrose-Wright2}
D.~M. Ambrose and J.~D. Wright.
\newblock Traveling waves and weak solutions for an equation with degenerate
  dispersion.
\newblock Preprint.

\bibitem{Ambrose-Wright1}
D.~M. Ambrose and J.~D. Wright.
\newblock Preservation of support and positivity for solutions of degenerate
  evolution equations.
\newblock {\em Nonlinearity}, 23(3):607--620, 2010.

\bibitem{Barenblatt}
G.~I. Barenblatt.
\newblock On self-similar motions of a compressible fluid in a porous medium.
\newblock {\em Akad. Nauk SSSR. Prikl. Mat. Meh.}, 16:679--698, 1952.

\bibitem{Camassa-Holm}
R.~Camassa and D.~D. Holm.
\newblock An integrable shallow water equation with peaked solitons.
\newblock {\em Phys. Rev. Lett.}, 71(11):1661--1664, 1993.

\bibitem{Chertock-Levy}
A.~Chertock and D.~Levy.
\newblock Particle methods for dispersive equations.
\newblock {\em J. Comput. Phys.}, 171(2):708--730, 2001.

\bibitem{Craig-Goodman}
W.~Craig and J.~Goodman.
\newblock Linear dispersive equations of {A}iry type.
\newblock {\em J. Differential Equations}, 87(1):38--61, 1990.

\bibitem{Craig-Kappeler-Strauss}
W.~Craig, T.~Kappeler, and W.~Strauss.
\newblock Gain of regularity for equations of {K}d{V} type.
\newblock {\em Ann. Inst. H. Poincar\'e Anal. Non Lin\'eaire}, 9(2):147--186,
  1992.

\bibitem{Dai-Huo}
H.-H. Dai and Y.~Huo.
\newblock Solitary shock waves and other travelling waves in a general
  compressible hyperelastic rod.
\newblock {\em R. Soc. Lond. Proc. Ser. A Math. Phys. Eng. Sci.},
  456(1994):331--363, 2000.

\bibitem{Defrutos}
J.~de~Frutos, M.~{\'A}. L{\'o}pez~Marcos, and J.~M. Sanz-Serna.
\newblock A finite difference scheme for the {$K(2,2)$} compacton equation.
\newblock {\em J. Comput. Phys.}, 120(2):248--252, 1995.

\bibitem{Daraio1}
F.~Fraternali, M.~A. Porter, and C.~Daraio.
\newblock {Optimal Design of Composite Granular Protectors}.
\newblock {\em {Mech. Adv. Mater. Struct.}}, {17}({1}):{1--19}, {2010}.

\bibitem{Galaktionov2}
V.~A. Galaktionov.
\newblock Nonlinear dispersion equations: smooth deformations, compactions, and
  extensions to higher orders.
\newblock {\em Zh. Vychisl. Mat. Mat. Fiz.}, 48(10):1859, 2008.

\bibitem{Galaktionov1}
V.~A. Galaktionov and S.~I. Pokhozhaev.
\newblock Third-order nonlinear dispersion equations: shock waves, rarefaction
  waves, and blow-up waves.
\newblock {\em Zh. Vychisl. Mat. Mat. Fiz.}, 48(10):1819--1846, 2008.

\bibitem{Goodman-Lax}
J.~Goodman and P.~D. Lax.
\newblock On dispersive difference schemes. {I}.
\newblock {\em Comm. Pure Appl. Math.}, 41(5):591--613, 1988.

\bibitem{Kenig}
C.~E. Kenig, G.~Ponce, and L.~Vega.
\newblock The initial value problem for the general quasi-linear
  {S}chr\"odinger equation.
\newblock In {\em Recent developments in nonlinear partial differential
  equations}, volume 439 of {\em Contemp. Math.}, pages 101--115. Amer. Math.
  Soc., Providence, RI, 2007.

\bibitem{Ketcheson}
D.~Ketcheson.
\newblock {\em High Order Strong Stability Preserving Time Integrators and
  Numerical Wave Propagation Methods for Hyperbolic PDEs}.
\newblock 2009.

\bibitem{Levy-Shu-Yan}
D.~Levy, C.-W. Shu, and J.~Yan.
\newblock Local discontinuous {G}alerkin methods for nonlinear dispersive
  equations.
\newblock {\em J. Comput. Phys.}, 196(2):751--772, 2004.

\bibitem{Li-Olver-Rosenau}
Y.~A. Li, P.~J. Olver, and P.~Rosenau.
\newblock Non-analytic solutions of nonlinear wave models.
\newblock In {\em Nonlinear theory of generalized functions ({V}ienna, 1997)},
  volume 401 of {\em Chapman \& Hall/CRC Res. Notes Math.}, pages 129--145.
  Chapman \& Hall/CRC, Boca Raton, FL, 1999.

\bibitem{Nesterenko}
V.~F. Nesterenko.
\newblock {\em Dynamics of hetereogeneous materials}.
\newblock Springer, New York, NY, 2001.

\bibitem{Daraio2}
L.~Ponson, N.~Boechler, Y.~M. Lai, M.~A. Porter, P.~G. Kevrekidis, and
  C.~Daraio.
\newblock {Nonlinear waves in disordered diatomic granular chains}.
\newblock {\em {Phys. Rev. E}}, {82}({2, Part 1}), {AUG 12} {2010}.

\bibitem{Kevrekidis}
M.~Porter, C.~Daraio, I.~Szelengowicz, E.~Herbold, and P.~Kevrekidis.
\newblock Highly nonlinear solitary waves in heterogeneous periodic granular
  media.
\newblock {\em Physica D}, 2009.

\bibitem{Rosenau-PRL1994}
P.~Rosenau.
\newblock Nonlinear dispersion and compact structures.
\newblock {\em Phys. Rev. Lett.}, 73(13):1737--1741, 1994.

\bibitem{Rosenau-PLA00}
P.~Rosenau.
\newblock Compact and noncompact dispersive patterns.
\newblock {\em Phys. Lett. A}, 275(3):193--203, 2000.

\bibitem{Rosenau-Hyman}
P.~Rosenau and J.~Hyman.
\newblock Compactons: solitons with finite wavelength.
\newblock {\em Phys. Rev. Lett.}, (70):564--567, 1993.

\bibitem{Rosenau-Kashdan}
P.~Rosenau and E.~Kashdan.
\newblock Emergence of compact structures in a klein-gordon model.
\newblock {\em Phys. Rev. Lett.}, (104), 2010.

\bibitem{Rubinstein}
J.~Rubinstein and J.~B. Keller.
\newblock Sedimentation of a dilute suspension.
\newblock {\em Phys. Fluids A}, 1(4):637--643, 1989.

\bibitem{Rus2}
F.~Rus and F.~R. Villatoro.
\newblock Pad\'e numerical method for the {R}osenau-{H}yman compacton equation.
\newblock {\em Math. Comput. Simulation}, 76(1-3):188--192, 2007.

\bibitem{Rus1}
F.~Rus and F.~R. Villatoro.
\newblock Self-similar radiation from numerical {R}osenau-{H}yman compactons.
\newblock {\em J. Comput. Phys.}, 227(1):440--454, 2007.

\bibitem{Simpson}
G.~Simpson, M.~Spiegelman, and M.~I. Weinstein.
\newblock Degenerate dispersive equations arising in the study of magma
  dynamics.
\newblock {\em Nonlinearity}, 20(1):21--49, 2007.

\bibitem{Wayne-Wright}
C.~E. Wayne and J.~D. Wright.
\newblock Higher order modulation equations for a {B}oussinesq equation.
\newblock {\em SIAM J. Appl. Dyn. Syst.}, 1(2):271--302 (electronic), 2002.

\bibitem{Wright}
J.~D. Wright.
\newblock {\em Higher order corrections to the {K}d{V} approximation for water
  waves}.
\newblock ProQuest LLC, Ann Arbor, MI, 2004.
\newblock Thesis (Ph.D.)--Boston University.

\end{thebibliography}
}

\end{document}